\documentclass[11pt,a4paper]{article}
\usepackage{subfiles}
\usepackage{paper}

\setbool{noProblems}{false}
\setbool{noChanges}{true}
\newcommand{\dobib}{\bibliographystyle{plain}\bibliography{Diss.bib}}

\title{Pure Measures, Density Measures and \\ the Dual of $L^{∞}$}
\author{Moritz Schönherr, Friedemann Schuricht \\ \small TU Dresden -
	Fachrichtung Mathematik\\ \small 01062
Dresden, Germany}
\begin{document}
\renewcommand{\dobib}{}

\maketitle

\section*{Abstract}
Measures play an important role in the characterisation of various function
spaces. In this paper, the structure of \tdme{}s will be investigated. These
are elements of the dual of the space of essentially bounded functions. The
main results presented here are a more precise representation of
$\dual{\libld{\dom}}$, leading to the notion of \tpfa{} \tme{}s, and the
definition and analysis of \tdme{}s which constitute a large class of such
\tme{}s. It is shown that \tdme{}s have applications in the context of traces. In particular, new and meaningful examples of \tpfa{} \tme{}s are given
on $\Rn$, in contrast to common examples in the literature, which are usually
constructed on $\N$.

\section{Introduction}

In most mathematical texts, measures are defined to be $\sigma$-additive. For
these \tsme{}s, a rich theory and convergence theorems hold true. Their
importance follows from Riesz Representation Theorem, which characterises the
dual space of the space of all compactly supported functions as the space of
\tRaM{}s. Yet, other dual spaces cannot be represented by \tsme{}s but by
finitely additive \tme{}s, e.g.
$\dual{C_b(\dom)}$ and $\dual{\libld{\dom}}$ (cf.
\cite{alexandroff_additive_1941}, \cite{dunford_linear_1988}). The reason why \tme{}s which are not $\sigma$-additive are not widely used
outside of economic theory is probably due to the lack of meaningful examples
in the literature. In this paper, the basic theory of finitely additive \tme{}s
as used in e.g. \cite{rao_theory_1983} and \cite{dunford_linear_1988} will be
outlined and a new, meaningful example for such \tme{}s will be given on $\Rn$.
Interestingly, many results on their structure are known. In particular, the
dual space of the space of essentially bounded functions is known to be
represented by the space of all bounded measures, which do not charge Lebesgue
null sets. This representation will be
extended by the identification and definition of \textit{\tpfa{}} \tme{}s and
later on \textit{\tdme{}s}, which constitute a large class of \tpfa{} \tme{}s.
Some examples will show that they can be employed in the study of traces and
even differential calculus.

The structure of the paper is as follows. First, some necessary notions and
results from lattice theory will be recalled and useful proposition on the
successive decomposition of vector lattices will be proved.

In the second section, the theory of \tme{}s and the associated integration
theory will be given. The known representation of the dual space of $\libld{\dom}$
will be refined, using the decomposition techniques from the previous section.
Pure \tme{}s will be defined and a necessary condition for a \tme{} to be pure
will be given. In plus, a first example will illustrate the new results.

The last section contains work on \tdme{}s. Following their
definition, existence and some properties will be proved. In plus, the extremal
points of the set of all \tdme{}s will be characterised.
Exemplary applications to the traces of functions of bounded variation and
differential calculus will be presented. Finally, the relation of \tpfa{} \tme{}s
and \tsme{}s which are singular with respect to Lebesgue \tme{} will be
analysed.

Concerning notation, in the following $n\in \N$ denotes a positive natural
number and $\Rn$ the vector space of real $n$-tuples. For a set $\dom \subset
\Rn$ the set $\dnhd{\dom}{\delta}$ denotes the open $\delta$-neighbourhood of
$\dom$. Open balls with radius $\delta >0$ and centre $\eR \in \Rn$ are written
$\ball{\eR}{\delta} = \dnhd{\{\eR\}}{\delta}$. The Borel subsets of $\dom$, i.e. the \tsme{} generated by all
relatively open sets in $\dom$, is denoted by $\bor{\dom}$. $\lem$ is the Lebesgue measure and $\ham^d$ the $d$-dimensional
Hausdorff measure. For set function $\me$ on $\dom$, $\reme{\me}{\als}$ denotes
the restriction of $\me$ to $\als$. The Banach space of equivalence classes of
$p$-integrable functions is denoted by $\lpbld{p}{\dom}$ and $\hocon{p}$
denotes the Hölder-conjugate of $p$. Spaces of continuous functions with a
support which is relatively compact in $\dom$ will be written $\Ccfun{\dom}$. (Weak) Derivates of functions $\fun$ are written
$\Deriv{\fun}$. The divergence of a vector field $\funv$, be it
classical or distributional, is denoted by $\divv{\funv}$.

\section{Tools from Lattice Theory}
First, the basic definitions for vector lattices from Rao \cite[p.
24ff]{rao_theory_1983} (cf. \cite[p. 347]{birkhoff_lattice_1993}) is given.
\begin{definition}
  Let $L$ be a vector space and $\leq$ a partial order on $L$ which is
  compatible with $+$ and the multiplication with a scalar on $L$. If
  for all $l_1,l_2 \in L$ the supremum and infimum of $\{l_1,l_2\}$
  exist, then $L$ is called a \textbf{vector lattice}. For $l,l_1,l_2 \in L$
  write
  \begin{align*}
    l_1 \vee l_2 & := \sup \{l_1,l_2\} \nomenclature[l]{$l_1 \vee
    l_2$}{$\sup\{l_1,l_2\}$} \\
    l_1 \wedge l_2 & := \inf \{l_1,l_2\} \nomenclature[l]{$l_1 \wedge
    l_2$}{$\inf\{l_1,l_2\}$}\\
    l^+ &:= l \vee 0 \nomenclature[l]{$l^+$}{$l\vee 0$}\\
    l^- & := -l \vee 0 \nomenclature[l]{$l^-$}{$l \wedge 0$} \\
    \tova{l} & := l^+ + l^- \nomenclature[l]{$\tova{l}$}{$l^+ + l^-$}
  \end{align*}
  $l_1,l_2 \in L$ are called \textbf{orthogonal}\index{orthogonal lattice
  elements}, if $|l_1|\wedge
  |l_2| = 0$, written $l_1 \perp l_2$.\nomenclature[l]{$l_1 \perp l_2$}{$\tova{l_1}
	  \wedge \tova{l_2} = 0$}  
  If for a family $\{l_i\}_{i\in \mathcal{I}} \subset L$ the supremum
  exists, write
  \begin{equation*}
    \bigvee \limits_{i \in \mathcal{I}} l_i := \sup \limits_{i \in
      \mathcal{I}} l_i \text{\,.}
  \end{equation*}
  If the infimum of $\{l_i\}_{i \in \mathcal{I}}$ exists, it is
  denoted by
  \begin{equation*}
    \bigwedge \limits_{i \in \mathcal{I}} l_i := \inf \limits_{i \in
      \mathcal{I}} l_i \text{\,.}
  \end{equation*}
  A set $L' \subset L$ is called \textbf{bounded from above}, if there exists $l
  \in L$, such that $l' \leq l$ for all $l' \in L'$.

  A vector lattice is called \textbf{boundedly complete}\index{boundedly
  complete lattice}, if for every
  $\{l_i\}_{i\in \mathcal{I}} \subset L$ which is bounded from above
  the supremum $\bigvee \limits_{i\in \mathcal{I}} l_i$ exists.
\end{definition}
For a vector lattice $L$ and $l_1,l_2 \in L$
\begin{equation*}
  \tova{l_1 + l_2} \leq \tova{l_1} + \tova{l_2}
\end{equation*}
with equality if $l_1 \perp l_2$ (cf. \cite[p. 25]{rao_theory_1983}).
The following example foreshadows the partial order that turns spaces
of \tme{}s into vector lattices.

In order to obtain results for an orthogonal decomposition of vector
lattices (and their elements), one has to define appropriate sub-structures (cf. \cite[p. 28]{rao_theory_1983}).
\begin{definition}
  A linear subspace $L'$ of $L$ is called a \textbf{sublattice} of $L$
  if $l_1 \vee l_2 \in L'$ and $l_1 \wedge l_2 \in L'$ for all $l_1,l_2$ in $L'$.
  
  A sublattice $L'$ of $L$ is called \textbf{normal}\index{normal sublattice}, if
  \begin{enumerate}
  \item for all $l' \in L'$ and all $l\in L$ 
    \begin{equation*}
      |l| \leq |l'| \implies l \in L'
    \end{equation*}
  \item if for $\{l_i\}_{i\in \mathcal{I}} \subset L'$ the supremum
    exists in $L$, then $\bigvee\limits_{i\in \mathcal{I}} l_i \in L'$.
  \end{enumerate}
\end{definition}
In order to decompose a vector lattice into normal sublattices, a notion of
orthogonality is needed (cf. \cite[p. 29]{rao_theory_1983}).
\begin{definition}\label{def:orth_comp}
  For a subset $L'$ of $L$, the set
  \begin{equation*}
    (L')^\perp := \{l \in L \setpipe \forall l' \in L': l \perp l'\} 
    \nomenclature[l]{$L^\perp$}{orthogonal complement}
  \end{equation*}
  is called \textbf{orthogonal complement} of $L'$.
\end{definition}
The following statement from \cite[p. 29f]{rao_theory_1983} illustrates that
normal sublattices and orthogonality interact in a similar way as closed linear
subspaces and orthogonality in Hilbert spaces do.
\begin{proposition}
  Let $S \subset L$, then $S^\perp$ is a normal
  sublattice of $L$. If $S$ is a normal sublattice, then $(S^\perp)^\perp = S$.
\end{proposition}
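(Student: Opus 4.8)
The plan is to treat the two assertions separately: the normal-sublattice claim by direct verification of the defining properties, and the identity $(S^{\perp})^{\perp}=S$ by constructing an explicit orthogonal decomposition. For the first assertion I would begin by checking that $S^{\perp}$ is a linear subspace. Closure under addition follows from $\tova{l_1+l_2}\le\tova{l_1}+\tova{l_2}$ combined with the standard inequality $(a+b)\wedge c\le a\wedge c+b\wedge c$ for $a,b,c\ge 0$, which forces $\tova{l_1+l_2}\wedge\tova{s}=0$ as soon as both summands are orthogonal to $s$; closure under scalars uses $\tova{\alpha l}\wedge\tova{s}\le n(\tova{l}\wedge\tova{s})=0$ for an integer $n\ge|\alpha|$. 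Solidity (condition~1) is immediate, since $\tova{l}\le\tova{l'}$ with $l'\in S^{\perp}$ gives $\tova{l}\wedge\tova{s}\le\tova{l'}\wedge\tova{s}=0$ for every $s\in S$, and the sublattice property then drops out of $\tova{l_1\vee l_2}\le\tova{l_1}+\tova{l_2}$ together with solidity. The only part with real content is condition~2.

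For condition~2, let $\{l_i\}\subset S^{\perp}$ with $u=\bigvee_i l_i$ existing in $L$; I must show $\tova{u}\wedge\tova{s}=0$ for all $s\in S$. Splitting $\tova{u}=u^++u^-$, the negative part is easy: $u\ge l_j$ gives $u^-\le l_j^-\le\tova{l_j}$, whence $u^-\wedge\tova{s}=0$. For the positive part, one checks $u^+=\bigvee_i l_i^+$ and each $l_i^+\wedge\tova{s}=0$; then from the identity $l_i^+=l_i^+\wedge\tova{s}+(l_i^+-\tova{s})^+=(l_i^+-\tova{s})^+\le(u^+-\tova{s})^+$, taking the supremum over $i$ yields $u^+\le(u^+-\tova{s})^+=u^+-u^+\wedge\tova{s}$, so $u^+\wedge\tova{s}=0$. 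This reproves by hand exactly the instance of the infinite distributive law that is needed, avoiding any appeal to Dedekind completeness. Adding the two parts gives $u\in S^{\perp}$.

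For the identity $(S^{\perp})^{\perp}=S$, the inclusion $S\subseteq(S^{\perp})^{\perp}$ is formal, since by definition every $s\in S$ is orthogonal to every element of $S^{\perp}$. For the reverse inclusion I would first note that, by the first assertion applied to the set $S^{\perp}$, the set $(S^{\perp})^{\perp}$ is again a normal sublattice, hence closed under $l\mapsto l^+,l^-$, which reduces the claim to $l\ge 0$. Given such an $l\in(S^{\perp})^{\perp}$, set $l_1=\bigvee\{s\in S:0\le s\le l\}$ (the indexing set is upward directed, being closed under $\vee$, and bounded above by $l$) and $l_2=l-l_1\ge 0$. Condition~2 gives $l_1\in S$, and the decisive computation shows $l_2\in S^{\perp}$: for $s\in S$ with $s\ge 0$ the element $r=l_2\wedge s$ satisfies $0\le r\le s$, so $r\in S$ by solidity, hence $l_1+r\in S$ belongs to the defining set and $l_1+r\le l$, forcing $l_1+r\le l_1$, i.e. $r=0$. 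Passing to $\tova{s}$ for arbitrary $s$ gives $l_2\in S^{\perp}$. Since $l_1\in S\subseteq(S^{\perp})^{\perp}$, we get $l_2=l-l_1\in S^{\perp}\cap(S^{\perp})^{\perp}$, so $l_2\perp l_2$, i.e. $l_2=0$ and $l=l_1\in S$.

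The main obstacle is the existence of the supremum $l_1=\bigvee\{s\in S:0\le s\le l\}$: in an arbitrary vector lattice a merely bounded-above set need not admit a supremum, so this step requires $L$ to be boundedly complete in the sense just defined, which is the standing hypothesis in the cases of interest here. Everything else is bookkeeping with the elementary lattice inequalities, and the one genuinely delicate point — the supremum appearing in condition~2 of the first assertion — is dispatched by the explicit computation above rather than by invoking distributivity or completeness.
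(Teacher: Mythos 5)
Your argument is correct; note that the paper itself gives no proof of this proposition but cites it directly from Rao, so there is no internal proof to compare against. Your verification that $S^\perp$ is a normal sublattice is the standard one, and your handling of condition~2 --- deriving $u^+\wedge|s|=0$ from $l_i^+=(l_i^+-|s|)^+\le (u^+-|s|)^+$ together with the identity $(a-b)^+=a-a\wedge b$ --- is a clean, self-contained substitute for the infinite distributive law and correctly uses only the existence of the supremum assumed in that condition, with no completeness hypothesis. For the identity $(S^\perp)^\perp=S$ you in effect re-derive the Riesz decomposition that the paper records separately as Proposition~\ref{thm:riesz_dec}: you split $l=l_1+l_2$ with $l_1=\bigvee\{s\in S\mid 0\le s\le l\}\in S$, show $l_2\in S^\perp$ via the computation $l_1+(l_2\wedge s)=l\wedge(l_1+s)\le l$, and conclude from $l_2\in S^\perp\cap(S^\perp)^\perp$ that $l_2=0$. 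The one caveat, which you flag yourself, is that the existence of $l_1$ requires $L$ to be boundedly complete; the proposition as printed does not state this hypothesis, but it is the standing assumption in the cited passage of Rao, and every lattice to which the paper subsequently applies the result (the spaces of bounded measures and their normal sublattices) is boundedly complete by Proposition~\ref{prop:ba_bcsubl}, so nothing downstream is affected. All the elementary lattice identities you invoke ($a=a\wedge b+(a-b)^+$, $u^+=\bigvee_i l_i^+$, subadditivity and multiplicativity of the total variation, monotonicity of $x\mapsto(x-c)^+$) are valid in an arbitrary vector lattice.
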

A useful characterisation of the orthogonal complement of a normal sublattice
is the following.
\begin{proposition}\label{prop:char_orth}
	Let $S$ be a normal sublattice of $L$. Then $l \in S^\perp$ if and only
	if for every $s \in S$
	\begin{equation*}
		0 \leq \tova{s} \leq \tova{l} \implies s = 0 \,.
	\end{equation*}
\end{proposition}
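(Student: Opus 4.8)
The plan is to prove both implications directly from the definitions, with the only substantive ingredient being the normality of $S$ in the converse direction; everything else reduces to elementary lattice identities already recorded in the excerpt.

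For the forward implication I would assume $l \in S^\perp$ and take any $s \in S$ satisfying $0 \leq \tova{s} \leq \tova{l}$, aiming to conclude $s = 0$. By the definition of the orthogonal complement we have $l \perp s$, that is $\tova{l} \wedge \tova{s} = 0$. Since $\tova{s} \leq \tova{l}$ gives the elementary identity $\tova{l} \wedge \tova{s} = \tova{s}$ (the meet of a smaller element with a larger one being the smaller), it follows that $\tova{s} = 0$. As $\tova{s} = s^+ + s^-$ with $s^+, s^- \geq 0$, this forces $s^+ = s^- = 0$ and hence $s = s^+ - s^- = 0$.

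For the converse I would assume the stated implication holds for every $s \in S$ and then verify $l \perp s$ for an arbitrary $s \in S$. The natural object to consider is the meet $t := \tova{l} \wedge \tova{s}$, and the goal is precisely $t = 0$. First I would observe $0 \leq t$, because $0$ is a lower bound of $\{\tova{l},\tova{s}\}$ and $t$ is their infimum, so that $\tova{t} = t$. The key step is then to show $t \in S$: from $t \leq \tova{s}$, i.e. $\tova{t} \leq \tova{s}$ with $s \in S$, the first normality axiom yields $t \in S$. Since $t$ also satisfies $0 \leq \tova{t} = t \leq \tova{l}$, applying the hypothesis to $t$ gives $t = 0$, i.e. $\tova{l} \wedge \tova{s} = 0$. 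As $s \in S$ was arbitrary, $l \in S^\perp$.

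The main (and really the only) obstacle is recognising that the meet $t = \tova{l} \wedge \tova{s}$ must itself lie in $S$, which is exactly where normality is used; without the closure condition on $S$ the argument would stall. The monotonicity of the absolute value and the fact that the infimum of two nonnegative elements remains nonnegative are routine, so I anticipate no further difficulty once the meet is placed inside $S$.
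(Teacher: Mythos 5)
Your proof is correct and follows essentially the same route as the paper's own argument: the forward direction uses $\tova{s} \leq \tova{l} \implies \tova{s}\wedge\tova{l} = \tova{s}$ together with orthogonality, and the converse places $t = \tova{s}\wedge\tova{l}$ inside $S$ via the first normality axiom and then applies the hypothesis to $t$. No gaps; the additional remarks on $\tova{t}=t$ for $t\geq 0$ and on $\tova{s}=0\implies s=0$ are just details the paper leaves implicit.
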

\begin{proof}
	Assume first that $l \in S^\perp$. Then for every $s \in S$
	\begin{equation*}
		0 \leq \tova{s} \leq \tova{l} \implies 0 = \tova{s} \wedge
		\tova{l} = \tova{s} \implies s = 0 \,.
	\end{equation*}
	Now assume for every $s \in S$
	\begin{equation*}
		0 \leq \tova{s} \leq \tova{l} \implies s = 0 \,.
	\end{equation*}
	Since $S$ is a normal sublattice
	\begin{equation*}
		0 \leq \tova{s} \wedge \tova{l} \leq \tova{s} \implies
		\tova{s} \wedge \tova{l} \in S \,.	
	\end{equation*}
	By assumption
	\begin{equation*}
		\tova{s} \wedge \tova{l} \leq \tova{l}\implies  \tova{s}\wedge
		\tova{l} = 0 \,.
	\end{equation*}
	Thus $s \perp l$.
\end{proof}
As in the setting of Hilbert spaces, a boundedly complete vector lattice can be
represented as the direct sum of a normal sublattice and its orthogonal
complement (cf. \cite[p. 29]{rao_theory_1983}).
\begin{proposition}\label{thm:riesz_dec}{Riesz Decomposition Theorem\\}
	Let $S$ be a normal sublattice of $L$, then for every $l \in L$ there
  exist unique elements $s\in S,s^\perp \in S^\perp$ such that
  \begin{equation*}
    l = s + s^\perp \text{\,.}
  \end{equation*}
  Furthermore, if $l \geq 0$, then $s = \bigvee \limits_{s' \in S} l
  \wedge |s'|$. For general $l \in L$ 
  \begin{equation*}
    s = \bigvee \limits_{s' \in S} l^+ \wedge |s'| -
    \bigvee\limits_{s'\in S} l^- \wedge |s'| \text{\,.}
  \end{equation*}
\end{proposition}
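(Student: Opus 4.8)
The plan is to establish the decomposition first for $l \geq 0$ and then extend it to general $l$ through the splitting $l = l^+ - l^-$; throughout I use that $L$ is boundedly complete, as assumed in the paragraph preceding the statement. For $l \geq 0$ I would take $s := \bigvee_{s' \in S} l \wedge \tova{s'}$ as the candidate. The family $\{l \wedge \tova{s'}\}_{s' \in S}$ is bounded above by $l$, so its supremum exists by bounded completeness. Each member satisfies $0 \leq l \wedge \tova{s'} \leq \tova{s'}$, so the first normality axiom places it in $S$, and the second normality axiom then places $s$ in $S$. Writing $s^\perp := l - s$, the bound $s \leq l$ gives $s^\perp \geq 0$, while taking $s' = 0$ shows $s \geq l \wedge \tova{0} = 0$.

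The heart of the matter is to show $s^\perp \in S^\perp$, for which I would invoke the characterisation in Proposition \ref{prop:char_orth}: it suffices to prove that every $t \in S$ with $0 \leq \tova{t} \leq s^\perp$ vanishes. Replacing $t$ by $\tova{t} \in S$, I may assume $t \geq 0$. The key claim, proved by induction on $n$, is that $n t \leq s$ for all $n \in \N$: the case $n = 0$ is just $s \geq 0$, and if $n t \leq s$, then adding $t \leq s^\perp$ gives $(n+1) t \leq s + s^\perp = l$; since $(n+1)t \in S$ is nonnegative and bounded by $l$, it equals $l \wedge \tova{(n+1)t}$, one of the terms in the supremum defining $s$, whence $(n+1) t \leq s$. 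Thus $\{n t\}_{n \in \N}$ is bounded above by $s$, and this is where bounded completeness enters a second time: such a lattice is Archimedean, so $n t \leq s$ for all $n$ forces $t = 0$. Concretely, $u := \bigvee_{n} n t$ satisfies $u - t \geq n t$ for every $n$, hence $u - t \geq u$ and $t \leq 0$. This passage from an order bound to orthogonality via the Archimedean property is the step I expect to be the main obstacle; everything else is bookkeeping.

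Uniqueness rests on $S \cap S^\perp = \{0\}$: if $x \in S \cap S^\perp$, then $x \perp x$, i.e. $\tova{x} = \tova{x} \wedge \tova{x} = 0$ and so $x = 0$. Consequently two decompositions $s_1 + s_1^\perp = s_2 + s_2^\perp$ yield $s_1 - s_2 = s_2^\perp - s_1^\perp \in S \cap S^\perp = \{0\}$, forcing $s_1 = s_2$ and $s_1^\perp = s_2^\perp$.

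Finally, for general $l$ I would apply the nonnegative case to $l^+$ and $l^-$ and subtract. Writing $l^\pm = s_\pm + s_\pm^\perp$ with $s_\pm = \bigvee_{s' \in S} l^\pm \wedge \tova{s'} \in S$ and $s_\pm^\perp \in S^\perp$, the linearity of the subspaces $S$ and $S^\perp$ gives $l = l^+ - l^- = (s_+ - s_-) + (s_+^\perp - s_-^\perp)$, which is the asserted decomposition with $s = \bigvee_{s' \in S} l^+ \wedge \tova{s'} - \bigvee_{s' \in S} l^- \wedge \tova{s'}$, as claimed.
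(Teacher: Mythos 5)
The paper states this proposition without proof, citing Rao (p.\ 29), so there is no in-paper argument to compare against; your proof is correct and is essentially the standard one behind that citation, constructing $s$ exactly as the formula in the statement prescribes and using bounded completeness twice --- once to form the supremum $\bigvee_{s'\in S} l \wedge |s'|$, and once to obtain the Archimedean property that turns the order bounds $nt \leq s$ into $t = 0$. All the delicate points are handled correctly: normality places each $l \wedge |s'|$ and then $s$ itself in $S$, the inductive step rightly identifies $(n+1)t$ as one of the terms of the supremum, Proposition \ref{prop:char_orth} is the appropriate criterion for $s^\perp \in S^\perp$, uniqueness follows from $S \cap S^\perp = \{0\}$ together with the linearity of both subspaces, and the general case reduces cleanly to $l = l^+ - l^-$.
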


The following proposition enables the successive decomposition of a lattice
into sublattices. This is used in the analysis of \tme{}s. In particular,
this proposition enables a better characterisation of the dual of the space of
essentially bounded functions.
\begin{proposition}\label{thm:lat_capstab}
  Let $L_1,L_2$ be two normal sublattices of $L$. Then $L_1 \cap L_2$
  is a normal sublattice of $L_2$. Furthermore, the orthogonal
  complement of $L_1 \cap L_2$ in $L_2$ is $L_1^\perp \cap L_2$.
\end{proposition}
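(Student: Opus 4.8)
The plan is to verify in turn the two defining properties of a normal sublattice for $L_1 \cap L_2$ inside $L_2$, and then to identify its orthogonal complement by means of the characterisation in Proposition~\ref{prop:char_orth}. That $L_1 \cap L_2$ is a sublattice of $L_2$ is immediate: it is a linear subspace as an intersection of linear subspaces, and for $l_1,l_2 \in L_1 \cap L_2$ both $l_1 \vee l_2$ and $l_1 \wedge l_2$ lie in $L_1$ and in $L_2$, since each of these is a sublattice. The solidity condition is equally direct: if $l' \in L_1 \cap L_2$, $l \in L_2$ and $\tova{l} \leq \tova{l'}$, then $l \in L_1$ because $L_1$ is normal in $L$ and $l \in L_2 \subseteq L$, whence $l \in L_1 \cap L_2$.

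The delicate point, and the step I expect to be the main obstacle, is closedness of $L_1 \cap L_2$ under suprema (condition (2) of normality), because the supremum in the hypothesis is taken in $L_2$, whereas normality of $L_1$ concerns suprema in $L$. I would therefore first isolate the auxiliary claim that suprema in a normal sublattice coincide with suprema in the ambient lattice: if $B$ is a normal sublattice of $L$ and $A \subseteq B$ is nonempty with supremum $\sigma$ existing in $B$, then $\sigma$ is also the supremum of $A$ in $L$. To see this, fix $a_0 \in A$ and let $u \in L$ be any upper bound of $A$. Then $a_0 \leq \sigma \wedge u \leq \sigma$, so $0 \leq \sigma \wedge u - a_0 \leq \sigma - a_0$, hence $\tova{\sigma \wedge u - a_0} \leq \tova{\sigma - a_0}$ with $\sigma - a_0 \in B$; solidity gives $\sigma \wedge u \in B$. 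As $\sigma \wedge u$ is then an upper bound of $A$ in $B$, we get $\sigma \leq \sigma \wedge u \leq u$, and since $u$ was arbitrary, $\sigma$ is the supremum of $A$ in $L$. Applying this claim with $B = L_2$ converts a supremum of $\{l_i\} \subseteq L_1 \cap L_2$ existing in $L_2$ into the same supremum in $L$; normality of $L_1$ then places it in $L_1$, and since it already lies in $L_2$, it lies in $L_1 \cap L_2$.

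For the orthogonal complement I would use Proposition~\ref{prop:char_orth}, noting first that orthogonality of two elements of $L_2$ is the same whether computed in $L_2$ or in $L$, as $L_2$ is a sublattice. The inclusion $L_1^\perp \cap L_2 \subseteq (L_1 \cap L_2)^\perp$ (the complement taken in $L_2$) is trivial because $L_1 \cap L_2 \subseteq L_1$. For the reverse inclusion, take $l \in L_2$ that is orthogonal to every element of $L_1 \cap L_2$; to show $l \in L_1^\perp$ I apply Proposition~\ref{prop:char_orth} in $L$ with $S = L_1$, so let $s \in L_1$ satisfy $0 \leq \tova{s} \leq \tova{l}$. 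Since $l \in L_2$ and $L_2$ is solid, $\tova{s} \leq \tova{l}$ forces $s \in L_2$, hence $s \in L_1 \cap L_2$; now Proposition~\ref{prop:char_orth} applied in $L_2$ to the normal sublattice $L_1 \cap L_2$ from the first part yields $s = 0$. As $s \in L_1$ was arbitrary, Proposition~\ref{prop:char_orth} gives $l \in L_1^\perp$, so $l \in L_1^\perp \cap L_2$, completing the identification.
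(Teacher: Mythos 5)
Your proof is correct, and it diverges from the paper's in two respects worth noting. For the normality of $L_1\cap L_2$, the paper only checks that a supremum of a family in $L_1\cap L_2$ which exists \emph{in $L$} lands in $L_1\cap L_2$; it silently passes over the fact that condition (2) for being a normal sublattice \emph{of $L_2$} concerns suprema formed in $L_2$. Your auxiliary claim --- that for a normal sublattice $B$ of $L$ a supremum computed in $B$ is automatically the supremum in $L$, proved via $a_0\le\sigma\wedge u\le\sigma$ and solidity of $B$ --- closes exactly this gap and is a genuine improvement in rigour. For the identification of the orthogonal complement, the nontrivial inclusion $(L_1\cap L_2)^\perp\subset L_1^\perp\cap L_2$ is where the two arguments really differ: the paper invokes the Riesz Decomposition Theorem (Proposition~\ref{thm:riesz_dec}) to write $l_2=l_1+l_1^\perp$, uses additivity of the total variation on orthogonal elements to get $\tova{l_1}\le\tova{l_2}$, places $l_1$ in $L_1\cap L_2$ by solidity of $L_2$, and then kills $l_1$ by orthogonality; you instead apply Proposition~\ref{prop:char_orth} twice, once in $L$ with $S=L_1$ and once in $L_2$ with $S=L_1\cap L_2$, using only solidity of $L_2$ to pass a test element $s$ into $L_1\cap L_2$. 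Your route is more elementary --- it needs neither the decomposition theorem (hence no bounded completeness of $L$) nor the additivity of $\tova{\cdot}$ on orthogonal elements --- at the modest cost of relying on both directions of Proposition~\ref{prop:char_orth}; the paper's route is more in the spirit of the Hilbert-space analogy it is drawing, exhibiting the complement via an explicit decomposition. Both are valid.
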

\begin{proof}
	Let $l_1\in L_1 \cap L_2$ and $l_2 \in L_2$ with
	\begin{equation*}
		\tova{l_2} \leq \tova{l_1} \,.
	\end{equation*}
	Since $L_1$ is a normal sublattice of $L$, 
	\begin{equation*}
		l_2 \in L_1 \,.
	\end{equation*}
	Whence $l_2 \in L_1 \cap L_2$.

	Now, let $\{l_i\}_{i \in\mathcal{I}}\subset L_1 \cap L_2$ be such that
	$\bigvee\limits_{i \in \mathcal{I}} l_i \in L$. Since $L_1$
	and $L_2$ are normal, 
	\begin{equation*}
		\bigvee\limits_{i \in \mathcal{I}} l_i \in L_1 \quad \text{ and
		} \quad \bigvee\limits_{i \in \mathcal{I}} l_i \in L_2 \,.
	\end{equation*}
	This implies $\bigvee \limits_{i \in \mathcal{I}}l_i \in L_1 \cap L_2$.
	Thus $L_1 \cap L_2$ is a normal sublattice of $L_2$.

  Let $l_2 \in L_2$ such that $l_2 \in
  (L_1\cap L_2)^\perp$. Since $L_1$ is a normal sublattice of $L$, there exist $l_1 \in L_1,
  l_1^\perp \in L_1^\perp$ such that $l_2 = l_1 + l_1^\perp$. Now, using
  additivity of the total variation on orthogonal elements (cf.
  \cite[p. 25]{rao_theory_1983})
  \begin{equation*}
    0 \leq \sup \{|l_1|,|l_1^\perp|\} \leq |l_1| + |l_1^\perp| = |l_2| \,.
  \end{equation*}
  Hence,  $l_1,l_1^\perp \in L_2$ and $l_1,l_1^\perp \in (L_1 \cap L_2)^\perp$. Since $l_2 \in (L_1 \cap
  L_2)^\perp$, 
  \begin{equation*}
    0 = |l_2| \wedge |l_1| = |l_1 | \wedge |l_1| + |l_1| \wedge
    |l_1^\perp | = |l_1| \wedge |l_1| \,.
  \end{equation*}
  This implies $l_1 = 0$. Hence
  \begin{equation*}
    (L_1 \cap L_2)^\perp  \subset L_1^\perp \cap L_2 \text{\,.}
  \end{equation*}
  On the other hand, if $l_1^\perp \in L_1^\perp \cap L_2$, then for
  all $l_1 \in L_1 \cap L_2$
  \begin{equation*}
    |l_1| \wedge |l_1^\perp| = 0\,,
  \end{equation*}
  whence
  \begin{equation*}
    L_1^\perp \cap L_2 \subset (L_1 \cap L_2)^\perp \text{\,.}
  \end{equation*}
\end{proof}

\section{A Primer On Pure Measures}
In this article, set functions $\me : \al \subset \pos{\dom}\to \R$ will be called
\textit{\tme{}}, if for all $m \in \N$ and every pairwise disjoint $\{\als_k\}_{k=1}^m\subset \al$ with
$\bigcup \limits_{k=1}^m \als_k \in \al$
\begin{equation*}
	\me\left(\bigcup \als_k\right) = \sum \limits_{k=1}^m \me(\als_k) \,.
\end{equation*}
If this holds with $m= \infty$, the \tme{} is called \tsme{}. A \tme{} is
called \textit{bounded} if 
\begin{equation*}
	\sup \limits_{\als \in \al} |\me(\als)| < \infty \,.
\end{equation*}
An \textit{\tal{}} is a class of sets which is stable under union, intersection and
differences and contains at least $\emptyset$.
The spaces of \tme{}s considered in this paper are defined in accordance with \cite{rao_theory_1983}.

The spaces of \tme{}s considered in this thesis are defined in accordance with \cite{rao_theory_1983}.
\begin{definition}
	Let $\dom \subset \Rn$ and $\al \subset \pos{\dom}$ be an algebra. The set of all bounded \tme s $\me : \al \to \R$ is denoted by
	\begin{equation*}
		\baa  \,.
		\nomenclature[m]{$\baa$}{space of bounded \tme{}s on $\dom$}
	\end{equation*}
	The set of all bounded \tsme{}s $\sme : \al \to \R$ is denoted by 
	\begin{equation*}
		\caa \,.		
		\nomenclature[m]{$\caa$}{space of bounded \tsme{}s on $\dom$}
	\end{equation*}
\end{definition}
There is a natural partial order on $\baa$ (cf. \cite[p. 43]{rao_theory_1983}).
\begin{definition}
	Let $\dom \subset \Rn$ and $\al \subset \pos{\dom}$ be an \tal{}.
	For $\me,\metoo\in \baa$ one writes 
	\begin{equation*}
		\me \leq \metoo
	\end{equation*}
	if and only if for every $\als \in \al$
	\begin{equation*}
		\me(\als) \leq \metoo(\als) \,.
		\nomenclature[m]{$\me \leq \metoo$}{$\me(\als) \leq
		\metoo(\als)$ for all $\als \in \al$}
	\end{equation*}
\end{definition}
The following proposition links the theory of \tme{}s with the theory of
boundedly complete vector lattices. This is essential for the subsequent
results on the decomposition of \tme{}s.
The proposition is taken from \cite[p. 43f]{rao_theory_1983}. 
\begin{proposition}\label{prop:ba_bcsubl}
	Let $\dom \subset \Rn$ and $\al \subset \pos{\dom}$ be an $\tal{}$.
	Then $\baa$ together with the partial order $\leq$ is a boundedly
	complete vector lattice.
\end{proposition}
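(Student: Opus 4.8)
The plan is to establish the four constituent facts in order: that $\baa$ is a real vector space, that $\leq$ is a compatible partial order, that binary suprema and infima exist, and finally that the lattice is boundedly complete. The first two are purely pointwise: under $(\me+\metoo)(\als)=\me(\als)+\metoo(\als)$ and $(c\me)(\als)=c\,\me(\als)$ finite additivity and boundedness are inherited from $\R$, and reflexivity, antisymmetry, transitivity and compatibility of $\leq$ are immediate translations of the order on $\R$ evaluated set by set. I would dispatch these in a sentence; the substance lies in the lattice operations and in bounded completeness.

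To obtain binary suprema I would set, for $\me\in\baa$ and $\als\in\al$,
\begin{equation*}
  \me^+(\als) := \sup\{\me(B) \setpipe B\in\al,\ B\subseteq\als\},
\end{equation*}
which is bounded since $|\me(B)|\leq\sup_{\als'\in\al}|\me(\als')|<\infty$. Finite additivity is the first technical point, but here both inequalities are direct: for disjoint $\als_1,\als_2\in\al$ superadditivity holds because the independent suprema over $B\subseteq\als_1$ and over $B\subseteq\als_2$ simply add, while subadditivity holds by splitting any competitor $B\subseteq\als_1\cup\als_2$ as $(B\cap\als_1)\cup(B\cap\als_2)$, which is legitimate because $\al$ is closed under intersection. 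I would then identify $\me^+=\me\vee 0$: it dominates $\me$ (take $B=\als$) and $0$ (take $B=\emptyset$), while any common upper bound $\nu$ obeys $\nu(\als)\geq\nu(B)\geq\me(B)$ for every admissible $B$, so $\nu\geq\me^+$. General binary operations then follow formally from $\me\vee\metoo=\metoo+(\me-\metoo)^+$ and $\me\wedge\metoo=-\big((-\me)\vee(-\metoo)\big)$, establishing that $\baa$ is a vector lattice.

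For bounded completeness I would first reduce to \emph{directed} families. Given $\{\me_i\}_{i\in\mathcal{I}}$ bounded above by $\metoo$, the collection $D$ of all finite joins $\me_{i_1}\vee\cdots\vee\me_{i_k}$ is upward directed, is still bounded above by $\metoo$, and has exactly the same upper bounds as $\{\me_i\}_{i\in\mathcal{I}}$ (since each $\me_i\in D$ and each finite join is the least upper bound of finitely many $\me_i$); hence $\bigvee D$, once shown to exist, equals $\bigvee_{i}\me_i$. For the directed family $D$ I would propose the pointwise candidate
\begin{equation*}
  \mu(\als) := \sup\{\nu(\als) \setpipe \nu\in D\},
\end{equation*}
a well-defined real number trapped between $\nu_0(\als)$ for a fixed $\nu_0\in D$ and $\metoo(\als)$, so that $\mu$ is bounded. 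That $\mu$ is the least upper bound of $D$ is clear once $\mu\in\baa$, since it is a pointwise supremum; thus everything hinges on finite additivity of $\mu$.

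I expect this last additivity to be the main obstacle, and it is precisely where directedness is indispensable. For disjoint $\als_1,\als_2$ the inequality $\mu(\als_1\cup\als_2)\leq\mu(\als_1)+\mu(\als_2)$ is immediate, whereas for the reverse I would fix $\varepsilon>0$, choose $\nu',\nu''\in D$ nearly attaining the suprema on $\als_1$ and $\als_2$ respectively, use directedness to find a single $\nu\in D$ with $\nu\geq\nu'$ and $\nu\geq\nu''$, and then exploit the additivity of $\nu$ together with the monotonicity $\nu'\leq\nu\Rightarrow\nu'(\als_1)\leq\nu(\als_1)$ to conclude $\mu(\als_1\cup\als_2)\geq\nu(\als_1)+\nu(\als_2)\geq\mu(\als_1)+\mu(\als_2)-2\varepsilon$. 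Letting $\varepsilon\to 0$ gives equality, so $\mu\in\baa$ and $\mu=\bigvee D=\bigvee_i\me_i$. The reduction to directed families is the conceptual device that keeps this step no harder than the binary case; everything else is formal.
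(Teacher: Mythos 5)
Your proof is correct. The paper does not prove this proposition itself but cites Rao \cite[p.\ 43f]{rao_theory_1983}, and your argument --- constructing $\me\vee 0$ as $\als\mapsto\sup\{\me(B)\setpipe B\in\al,\ B\subseteq\als\}$, reducing general bounded families to upward directed ones via finite joins, and verifying finite additivity of the pointwise supremum using directedness --- is exactly the standard route taken there; all the key steps (additivity of $\me^+$ by splitting competitors, the identity $\me\vee\metoo=\metoo+(\me-\metoo)^+$, and the $\varepsilon$-argument for superadditivity of $\mu$) check out.
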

The following definitions are standard in \tme{} theory (cf. \cite[p. 45]{rao_theory_1983}).
\begin{definition}
	Let $\dom \subset \Rn$ and $\al \subset \pos{\dom}$ be an \tal{}. For $\me \in \baa$ define
	\begin{align*}
		\posi{\me} & := \me \vee 0 = \sup\{\me,0\}\\
		\nega{\me} & := (-\me) \vee 0 = \sup\{-\me,0\}\\
		\tova{\me} & := \posi{\me} + \nega{\me} \,.
	\end{align*}
	Call $\posi{\me}$ \textbf{\tposi}\index{positive@\tposi{} of a
	\tme{}}\nomenclature[m]{$\posi{\me}$}{\tposi{} of $\me$} of $\me$,
	$\nega{\me}$ \textbf{\tnega}\index{negative@\tnega{} of a \tme{}}
	\nomenclature[m]{$\nega{\me}$}{\tnega{} of $\me$} of $\me$ and
	$\tova{\me}$ \textbf{\ttova}\index{total@\ttova{} of a
	\tme{}}\nomenclature[m]{$\tova{\me}$}{\ttova{} of $\me$} of $\me$.

	Furthermore, for $\als \in \al$ define $\reme{\me}{\als} : \al \to \R$
	by
	\begin{equation*}
		(\reme{\me}{\als})(\als') := \me(\als \cap \als') \text{ for all } \als' \in \al \,.
		\nomenclature[m]{$\reme{\me}{\als}$}{restriction of $\me$ to
	$\als$} 
	\end{equation*}
\end{definition}
The \ttova{} and the lattice operations can be characterised in the following
way (cf. \cite[p. 46]{rao_theory_1983}, \cite[p. 48]{yosida_finitely_1951}).
\begin{proposition}
	Let $\dom \subset \Rn$ and $\al \subset \pos{\dom}$ be an \tal{}. Then
	for every $\me, \metoo \in \baa$ and $\als \in \al$
	\begin{equation*}
		\tova{\me}(\als) = \sup \sum \limits_{k=1}^m \left|\me\left(\als_k\right)\right|
	\end{equation*}
	and
	\begin{equation*}
		(\me \wedge \metoo)(\als) = \inf \limits_{\substack{\als' \in
		\al, \\ \als' \subset \als}} \me(\als')  + \metoo(\als
			\setminus \als') \,.
	\end{equation*}
	where the supremum is taken over all finite partitions $\{\als_k\}_{k = \mN}^m \subset \al$ of $\als$.
\end{proposition}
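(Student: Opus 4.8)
The plan is to handle the two identities separately, in each case writing down an explicit set function, checking that it lies in $\baa$, and then identifying it with the relevant lattice operation through its universal property; throughout I rely on the fact that $\baa$ is a boundedly complete vector lattice, Proposition~\ref{prop:ba_bcsubl}.

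For the infimum formula I would define
\[
  \nu(\als) := \inf_{\substack{\als' \in \al,\\ \als' \subset \als}} \bigl( \me(\als') + \metoo(\als \setminus \als') \bigr)
\]
and show $\nu = \me \wedge \metoo$ in three steps. First, $\nu \in \baa$: boundedness follows at once from the boundedness of $\me$ and $\metoo$, while finite additivity is the one genuinely computational point. Given disjoint $\als_1,\als_2 \in \al$ with union $\als$, I would obtain $\nu(\als) \ge \nu(\als_1) + \nu(\als_2)$ by intersecting an arbitrary competitor $\als'$ with $\als_1$ and $\als_2$ and invoking additivity of $\me,\metoo$, and the reverse inequality by assembling competitors for $\als_1$ and $\als_2$ into a single competitor for $\als$. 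Second, $\nu \le \me$ and $\nu \le \metoo$, seen by taking $\als' = \als$ and $\als' = \emptyset$ in the infimum. Third, $\nu$ is the greatest lower bound: if $\rho \in \baa$ satisfies $\rho \le \me$ and $\rho \le \metoo$, then additivity gives $\rho(\als) = \rho(\als') + \rho(\als \setminus \als') \le \me(\als') + \metoo(\als \setminus \als')$ for every admissible $\als'$, so $\rho \le \nu$ after passing to the infimum. Together these identify $\nu$ with $\me \wedge \metoo$.

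For the total variation formula, write $V(\als)$ for the claimed supremum over all finite partitions $\{\als_k\}_{k=1}^m$ of $\als$. I would again show $V \in \baa$ and then squeeze it against $\tova{\me}$. Additivity of $V$ is proved by the same intersect-and-reassemble scheme, now using $|\me(\als_k)| \le |\me(\als_k \cap \als_1)| + |\me(\als_k \cap \als_2)|$ in the nontrivial direction. Boundedness rests on the observation that, collecting the blocks with $\me(\als_k) \ge 0$ into $\als_+$ and the rest into $\als_-$, additivity gives $\sum_k |\me(\als_k)| = \me(\als_+) - \me(\als_-)$, which is at most $2 \sup_{\als \in \al} |\me(\als)|$. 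Since the trivial partition $\{\als\}$ yields $V(\als) \ge |\me(\als)|$, we have $V \ge \me$ and $V \ge -\me$, so $V$ is an upper bound of $\{\me,-\me\}$ and hence $V \ge \me \vee (-\me) = \tova{\me}$, recalling the lattice identity $\tova{\me} = \me \vee (-\me)$. Conversely, $\tova{\me} \ge \me$ and $\tova{\me} \ge -\me$ give $|\me(\als_k)| \le \tova{\me}(\als_k)$ on each block, so additivity of $\tova{\me}$ yields $\sum_k |\me(\als_k)| \le \tova{\me}(\als)$; taking the supremum gives $V \le \tova{\me}$, and therefore equality.

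The main obstacle I anticipate is bookkeeping rather than conceptual: establishing finite additivity of the two candidates $\nu$ and $V$, where each direction must be produced separately from the partition argument, and keeping track that all auxiliary sets — the intersections $\als_k \cap \als_i$ and the collected pieces $\als_+,\als_-$ — indeed belong to $\al$, which is exactly where the hypothesis that $\al$ is an algebra enters and allows $\me$ and $\metoo$ to be evaluated on them.
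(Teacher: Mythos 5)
Your proof is correct. The paper itself states this proposition without proof, citing Rao and Yosida--Hewitt; your argument -- checking that each explicit candidate set function is finitely additive and bounded, and then identifying it with the corresponding lattice operation via the least-upper-bound/greatest-lower-bound property in $\baa$ (together with the standard identity $\tova{\me} = \me \vee (-\me)$) -- is essentially the standard one found in those references, and all the individual steps (the intersect-and-reassemble additivity arguments, the $\als_+,\als_-$ boundedness estimate, and the two squeeze inequalities) go through as you describe.
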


The following proposition can be found in Rao \cite[p. 44]{rao_theory_1983}. It
states that in the space of bounded \tme{}s, the norm is compatible with the
partial order.
\begin{proposition}
	Let $\dom \subset \Rn$ and $\al \subset \pos{\dom}$ be an \tal{}. Then
	$\baa$ together with $\leq$ and the norm
	\begin{equation*}
		\norm{}{\me} :=\tova{\me}(\dom) \text{ for } \me \in \baa
		\nomenclature[n]{$\norm{}{\me}$}{$\tova{\me}(\dom)$}
	\end{equation*}
	is a Banach lattice, i.e. it is a Banach space and a vector lattice such that for all $\me,\metoo \in
	\baa$
	\begin{equation*}
		\tova{\me} \leq \tova{\metoo} \implies \norm{}{\me} \leq
		\norm{}{\metoo} \,.
	\end{equation*}
\end{proposition}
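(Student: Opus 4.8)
The vector lattice structure of $\baa$ has already been established in Proposition \ref{prop:ba_bcsubl}, and the compatibility of the norm with the order is almost immediate: if $\tova{\me} \leq \tova{\metoo}$, then by the pointwise definition of $\leq$ one has $\tova{\me}(\als) \leq \tova{\metoo}(\als)$ for every $\als \in \al$, in particular for $\als = \dom$, which is precisely $\norm{}{\me} \leq \norm{}{\metoo}$. Thus the plan is to verify that $\norm{}{\cdot}$ is genuinely a norm and, most importantly, that $\baa$ is complete with respect to it.

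That $\norm{}{\cdot}$ is a norm would follow from the characterisation $\tova{\me}(\dom) = \sup \sum_{k=1}^m |\me(\als_k)|$, the supremum being over all finite partitions of $\dom$. Positive homogeneity is clear, and positive definiteness holds since $\tova{\me}(\dom) = 0$ forces $|\me(\als)| = 0$ for every $\als \in \al$ (using the partition $\{\als, \dom \setminus \als\}$). The triangle inequality is inherited from the lattice inequality $\tova{\me + \metoo} \leq \tova{\me} + \tova{\metoo}$ recalled after the first definition, evaluated at $\dom$.

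For completeness, I would take a Cauchy sequence $\{\me_k\}_{k \in \N} \subset \baa$. For each fixed $\als \in \al$, the estimate $|\me_k(\als) - \me_j(\als)| \leq \tova{\me_k - \me_j}(\dom) = \norm{}{\me_k - \me_j}$ shows that $\{\me_k(\als)\}_k$ is Cauchy in $\R$, so one may define $\me(\als) := \lim_{k} \me_k(\als)$. Finite additivity of $\me$ passes through the pointwise limit, since for a finite disjoint family one need only interchange the limit with a finite sum. It then remains to show that $\me$ is bounded and that $\me_k \to \me$ in norm.

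The main obstacle is interchanging the limit with the supremum over partitions defining the total variation. Given $\epsilon > 0$, I would choose $N$ so that $\tova{\me_k - \me_j}(\dom) < \epsilon$ for all $k,j \geq N$. For any fixed finite partition $\{\als_i\}$ of $\dom$ and any $k \geq N$, one has $\sum_i |\me_k(\als_i) - \me_j(\als_i)| < \epsilon$ for all $j \geq N$; letting $j \to \infty$ in this finite sum gives $\sum_i |\me_k(\als_i) - \me(\als_i)| \leq \epsilon$, and taking the supremum over all partitions yields $\tova{\me_k - \me}(\dom) \leq \epsilon$, i.e. $\me_k \to \me$ in norm. Finally, $\me \in \baa$ because $\tova{\me}(\dom) \leq \tova{\me_N}(\dom) + \tova{\me - \me_N}(\dom) < \infty$ by the triangle inequality, and since $|\me(\als)| \leq \tova{\me}(\dom)$ for every $\als$, the limit $\me$ is indeed a bounded \tme{} lying in the space.
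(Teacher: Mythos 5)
The paper does not actually prove this proposition: it is quoted verbatim from Rao \cite[p.~44]{rao_theory_1983}, so there is no internal argument to compare against. Your proof is a correct, self-contained verification and is essentially the standard one. The order-compatibility and the norm axioms are handled correctly (positive definiteness via the partition $\{\als,\dom\setminus\als\}$, the triangle inequality from $\tova{\me+\metoo}\leq\tova{\me}+\tova{\metoo}$), and the completeness argument is sound: the key estimate $|\me_k(\als)-\me_j(\als)|\leq\tova{\me_k-\me_j}(\dom)$ gives a well-defined finitely additive pointwise limit, and the ``fix a partition, let $j\to\infty$ in the finite sum, then take the supremum'' step is exactly the right way to get norm convergence. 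One small point of hygiene: in the final step you invoke the lattice inequality $\tova{\me}\leq\tova{\me_N}+\tova{\me-\me_N}$ for $\me$ before knowing $\me\in\baa$; this is harmless, since the subadditivity of the partition supremum $\sup\sum_i|\me(\als_i)|$ holds for arbitrary finitely additive set functions and you already control $\sum_i|\me(\als_i)-\me_N(\als_i)|\leq\eps$ uniformly over partitions, but it is cleaner to phrase that last estimate directly in terms of partition sums rather than the lattice-theoretic total variation.
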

The following proposition is an application of Riesz's decomposition Theorem
(Proposition \ref{thm:riesz_dec}) (cf. \cite[p. 241]{rao_theory_1983}). In
particular, every bounded \tme{} can be uniquely decomposed into a \tsme{} and
a \tpfa{} \tme{}. Recall
the definition of orthogonal complement from page \pageref{def:orth_comp}.
\begin{proposition}\label{prop:yosh_hew_dec}
	Let $\dom \subset \Rn$ and $\al \subset \pos{\dom}$ be an algebra. Then $\baa$ is a boundedly complete vector lattice and $\caa$ one of its normal sublattices. Hence, every $\me \in \baa$ can uniquely be decomposed into $\me_c \in \caa$ and $\me_p \in \caa^\perp$ such that
	\begin{equation*}
		\me = \me_c + \me_p
	\end{equation*}
	\nomenclature[m]{$\me_c$}{$\sigma$-additive part of
	$\me$}\nomenclature[m]{$\me_p$}{\tpfa{} part of $\me$}
	and for every $\sme \in \caa$
	\begin{equation*}
		0 \leq \sme \leq |\me_p| \implies \sme = 0 \,.
	\end{equation*}
\end{proposition}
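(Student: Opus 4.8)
The plan is to invoke the Riesz Decomposition Theorem (Proposition~\ref{thm:riesz_dec}): since $\baa$ is already a boundedly complete vector lattice by Proposition~\ref{prop:ba_bcsubl}, once $\caa$ is shown to be a normal sublattice the existence and uniqueness of the splitting $\me = \me_c + \me_p$ with $\me_c \in \caa$ and $\me_p \in \caa^\perp$ follow immediately. The closing implication then drops out of the characterisation of the orthogonal complement in Proposition~\ref{prop:char_orth}: applied to $S = \caa$ and $l = \me_p \in \caa^\perp$ it shows that any $\sme \in \caa$ with $0 \leq \tova{\sme} \leq \tova{\me_p}$ vanishes, and for $\sme \geq 0$ one has $\tova{\sme} = \sme$, which is exactly the stated condition. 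Hence the whole burden of the proof is to verify that $\caa$ is a normal sublattice of $\baa$.

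That $\caa$ is a linear subspace is clear, since sums and scalar multiples of bounded $\sigma$-additive measures are again bounded and $\sigma$-additive. For the sublattice property I would first use the identity $\posi{\sme} = -\bigl((-\sme) \wedge 0\bigr)$ together with the infimum formula of the preceding proposition to obtain the upper-variation expression $\posi{\sme}(\als) = \sup_{\als' \subseteq \als} \sme(\als')$. By the Hahn decomposition this supremum is attained on a fixed positive set and is therefore $\sigma$-additive in $\als$, so $\posi{\sme} \in \caa$; writing $\sme_1 \vee \sme_2 = \sme_2 + \posi{(\sme_1 - \sme_2)}$ and dually for $\wedge$ shows $\caa$ is closed under the lattice operations.

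The two normality axioms are the substantive part. For the domination axiom, suppose $\me \in \baa$ and $\sme \in \caa$ satisfy $\tova{\me} \leq \tova{\sme}$. Since $\tova{\sme}$ is the total variation of a bounded $\sigma$-additive measure, it is itself $\sigma$-additive; hence for any disjoint sequence $\{\als_k\} \subset \al$ with union $\als \in \al$ the tail $\tova{\sme}\bigl(\bigcup_{k>N}\als_k\bigr) = \sum_{k>N}\tova{\sme}(\als_k)$ tends to $0$. As $|\me(\bigcup_{k>N}\als_k)| \leq \tova{\me}\bigl(\bigcup_{k>N}\als_k\bigr) \leq \tova{\sme}\bigl(\bigcup_{k>N}\als_k\bigr)$, the finite additivity of $\me$ upgrades to $\sigma$-additivity, so $\me \in \caa$.

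The hard part will be the second axiom: that a supremum of elements of $\caa$ which happens to exist in $\baa$ stays in $\caa$. Given $\{\sme_i\}_{i\in\mathcal{I}} \subset \caa$ with $\me := \bigvee_i \sme_i \in \baa$, I would pass to the upward-directed family of finite suprema $\bigvee_{i \in F}\sme_i$ (which lie in $\caa$ by the sublattice property) and, after subtracting a fixed $\sme_{i_0}$ and restricting to finite sets $F \ni i_0$, assume these form an increasing net of positive $\sigma$-additive measures. The key lemma is that for an increasing net the lattice supremum is computed pointwise, $\me(\als) = \sup_F \bigl(\bigvee_{i \in F}\sme_i\bigr)(\als)$; this is verified by checking that the pointwise supremum is finitely additive (using directedness to split the supremum over disjoint sets) and is the least upper bound. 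It then remains to show that a pointwise-increasing limit of positive $\sigma$-additive measures is $\sigma$-additive: superadditivity over a disjoint sequence comes from finite additivity together with positivity of the tails, while the reverse inequality follows by bounding each finite supremum's series term by term by the limit. Establishing this interchange of the supremum with the countable sum is the main obstacle; everything else is bookkeeping, after which Propositions~\ref{thm:riesz_dec} and~\ref{prop:char_orth} finish the argument.
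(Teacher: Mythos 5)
Your proposal is correct and follows the same route the paper takes; in fact the paper gives no proof of this proposition at all, remarking only that it is an application of the Riesz Decomposition Theorem and citing Rao for the underlying fact that $\caa$ is a normal sublattice of $\baa$ (the Yosida--Hewitt decomposition). You supply exactly the verification the paper outsources, and your two reductions --- Proposition \ref{thm:riesz_dec} for existence and uniqueness of the splitting, and Proposition \ref{prop:char_orth} together with $\tova{\sme} = \sme$ for $\sme \geq 0$ for the closing implication --- are the intended ones. One caveat: your appeal to the Hahn decomposition to show $\posi{\sme} \in \caa$ is not justified here, since $\al$ is only an algebra and the positive set produced by the usual Hahn argument is built as a countable limit of sets that need not lie in $\al$. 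The conclusion survives by a direct computation that avoids any Hahn set: for disjoint $\{\als_k\} \subset \al$ with union $\als \in \al$, every $\als' \in \al$ with $\als' \subset \als$ satisfies $\sme(\als') = \sum_k \sme(\als' \cap \als_k) \leq \sum_k \posi{\sme}(\als_k)$, which gives one inequality for $\posi{\sme}(\als)$, while finite additivity and positivity of $\posi{\sme}$ give $\posi{\sme}(\als) \geq \sum_{k=1}^{N} \posi{\sme}(\als_k)$ for every $N$, hence the other. The remainder of your argument --- the tail estimate for the domination axiom and the pointwise computation of the supremum of an upward-directed family for the second normality axiom --- is sound as sketched.
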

\begin{definition}
	Let $\dom \subset \Rn$ and $\al \subset \pos{\dom}$ be an \tal{}. Then
	every \tme{} $\me_p \in \caa^\perp$ is called
	\textbf{\tpfa}\index{pure@\tpfa{} \tme{}}.
	Notice that $\me_p$ is not $\sigma$-additive, by definition.
\end{definition}
One important example of \tme{}s that are \tpfa{} are density
\tme{}s. The following new example presents a particular \tdme{}, namely a
density at zero. In the literature, examples of \tpfa{} \tme{} are only known
for $\dom = \N$ (cf. \cite[p. 247]{rao_theory_1983}), they are defined on very
small \tal{}s (cf. \cite[p. 246]{rao_theory_1983}) or they are constructed in such a way that the \tme{} cannot be
computed explicitly, even on simple sets (cf. \cite[p.
57f]{yosida_finitely_1951}). The example given here is constructed on $\dom =
\Rn$ and lives on the Borel subsets of $\dom$.

\begin{example}\label{ex:dzero}
	Let $\dom := \ball{0}{1}\subset \Rn$ be open. Then there exists $\me
	\in \baA{\bor{\dom}}$, $\me \geq 0$ such that
	for every $\bals \in \bor{\dom}$
	\begin{equation*}
		\me(\bals) = \lim \limits_{\delta \downarrow 0} \frac{\lem(
		\bals \cap \ball{0}{\delta})}{\lem(\ball{0}{\delta})}
	\end{equation*}
	if this limit exists. This \tme{} is non-unique. Its existence is 
	shown in Proposition \ref{prop:ex_densme} (take $\lambda:= \lem$ and
	$\ferm = \{0\}$). 
		
	It is shown in Example \ref{ex:dzero_pfa}
	that $\me$ is indeed \tpfa{}. Figure \ref{fig:dzero_pfa} shows the
	family $\{\als_k\}_{k\in \N} \subset \bor{\dom}$
	\begin{equation*}
		\als_k := \left[\frac{1}{k+2},\frac{1}{k+1}\right) \times
		[-1,1]^{n-1} \,.
	\end{equation*}
	For this family
	\begin{equation*}
		\sum \limits_{k\in \N} \me(\als_k \cap \dom) = 0 \neq
		\me\left(\left(0,\frac{1}{2}\right)\times [-1,1]^{n-1}\cap \dom \right) = \me
		\left(\bigcup\limits_{k =1}^\infty \als_k \cap \dom\right)\,.
	\end{equation*}
	Hence, $\me$ is not a \tsme{}.
\end{example}

\begin{figure}[H]
	\centering
	\begin{tikzpicture}[scale=1.2]
		\draw (-0.2,-0.1) node {x};
		\draw[gray,thick,->] (0,0) -- node[above] {$\delta$} (150:2.2);
		\draw[fill] (0,0) circle [radius = 0.01];
		\draw[gray,dashed] (0,0) circle [radius=2.2];
		\draw (0.1,-2) rectangle (0.2,2);
		\draw (0.2,-2) rectangle (0.4,2);
		\draw (0.4,-2) rectangle (0.7,2);
		\draw (0.7,-2) rectangle (1.0,2);
		\draw (1.0,-2) rectangle node {$A_k$}(1.5,2);
		\draw (1.5,-2) rectangle node {...} (2.1,2);
		\draw (2.1,-2) rectangle node {$A_2$} (2.8,2);
		\draw (2.8,-2) rectangle node {$A_1$} (3.8,2);
	\end{tikzpicture}
	\caption{A family of sets on which $\me$ is not
	$\sigma$-additive}\label{fig:dzero_pfa}
\end{figure}
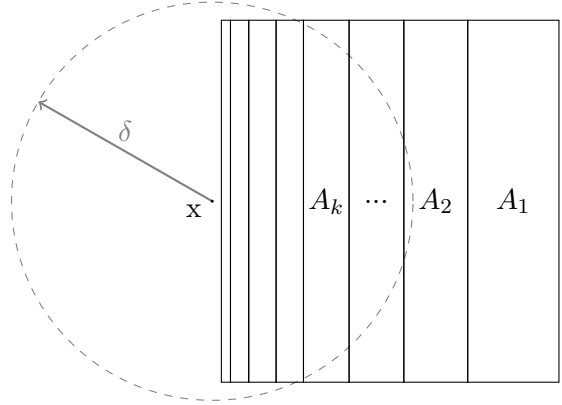

Measures that do not charge sets of Lebesgue measure
zero are of special interest, because these \tme{}s lend themselves naturally to the integration of
functions that are only defined outside of a set of measure zero. When treating non $\sigma$-additive measures, one carefully has to
distinguish the following two notions (cf. \cite[p. 159]{rao_theory_1983}).
\begin{definition}
	Let $\dom \subset \Rn, \al \subset \pos{\dom}$ be an algebra and
	$\metoo\in \baa$. Then $\me \in \baa$ is called 
	\begin{enumerate}
		\item \textbf{\tac}\index{absolutely@\tac{}} with respect to
			$\metoo$, if for every $\eps > 0$ there exists $\delta > 0$ such that for all $\als \in \al$
			\begin{equation*}
				|\metoo|(\als) < \delta \implies |\me(\als)| < \eps \,.
				\nomenclature[m]{$\me \ac \metoo$}{$\me$ is \tac{} w.r.t.
			$\metoo$}
			\end{equation*}
			In this case, write $\me \ac \metoo$.
		\item \textbf{\twac}\index{weakly absolutely@\twac{}} with
			respect to $\metoo$, if for every $\als \in \al$
			\begin{equation*}
				|\metoo|(\als) = 0 \implies \me(\als) = 0 \,.
				\nomenclature[m]{$\me \wac \metoo$}{$\me$ is \twac{}
			w.r.t. $\metoo$}
			\end{equation*}
			In this case, write $\me \wac \metoo$.
	\end{enumerate}
	The set of all \twac{} \tme{}s in $\baa$ is denoted by 
	\begin{equation*}
		\baaw{\metoo} \,.
		\nomenclature[m]{$\baaw{\metoo}$}{bounded and \twac{} measures
		w.r.t. $\metoo$}
	\end{equation*}
\end{definition}
The following proposition shows that there is no \tpfa{} \tme{} which is \tac{}
with respect to some \tsme{} (cf. \cite[p. 163]{rao_theory_1983}).
\begin{proposition}
	Let $\dom \subset \Rn, \al \subset \pos{\dom}$ be an \tal{} and $\sme \in \caa$. Then for every $\me \in \baa$
	\begin{equation*}
		\me \ac \sme \implies \me \in \caa\,.
	\end{equation*}
\end{proposition}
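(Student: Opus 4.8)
The plan is to prove directly that $\me$ is $\sigma$-additive; since $\me$ is already bounded by hypothesis, this gives $\me \in \caa$. So fix a pairwise disjoint family $\{\als_k\}_{k \in \N} \subset \al$ whose union $E := \bigcup_{k \in \N} \als_k$ lies in $\al$, and set, for each $m \in \N$, the tail
\begin{equation*}
  T_m := E \setminus \bigcup_{k=1}^m \als_k = \bigcup_{k=m+1}^\infty \als_k \in \al,
\end{equation*}
which is a member of $\al$ because $\al$ is stable under finite unions and differences. By finite additivity of $\me$,
\begin{equation*}
  \me(E) = \sum_{k=1}^m \me(\als_k) + \me(T_m),
\end{equation*}
so the claim $\me(E) = \sum_{k=1}^\infty \me(\als_k)$ is equivalent to $\me(T_m) \to 0$ as $m \to \infty$.

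Next I would control the tails through $\sme$. First note that $\tova{\sme} \in \caa$: by Proposition \ref{prop:yosh_hew_dec}, $\caa$ is a normal sublattice of $\baa$, hence in particular closed under the lattice operations, so $\posi{\sme} = \sme \vee 0$ and $\nega{\sme} = (-\sme) \vee 0$, and therefore $\tova{\sme} = \posi{\sme} + \nega{\sme}$, all belong to $\caa$; that is, $\tova{\sme}$ is itself $\sigma$-additive. Applying $\sigma$-additivity of $\tova{\sme}$ to the disjoint decomposition of $E$ gives $\sum_{k=1}^\infty \tova{\sme}(\als_k) = \tova{\sme}(E) < \infty$, and comparing this with the finitely additive identity $\tova{\sme}(E) = \sum_{k=1}^m \tova{\sme}(\als_k) + \tova{\sme}(T_m)$ yields $\tova{\sme}(T_m) \to 0$.

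Finally I would invoke the absolute continuity $\me \ac \sme$. Given $\eps > 0$, choose $\delta > 0$ from the definition of $\me \ac \sme$; since $\tova{\sme}(T_m) \to 0$, there is $M$ with $\tova{\sme}(T_m) < \delta$ for all $m \ge M$, whence $|\me(T_m)| < \eps$ for all such $m$. As $\eps$ was arbitrary, $\me(T_m) \to 0$, which by the first paragraph completes the argument.

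The only genuinely delicate point is the middle step, namely that the $\tova{\sme}$-size of the tails vanishes. This is exactly where $\sigma$-additivity of $\sme$ is used, and where the statement would fail for a merely finitely additive $\sme$: without $\sigma$-additivity of $\tova{\sme}$ one cannot conclude $\tova{\sme}(T_m) \to 0$, and the $\eps$-$\delta$ absolute continuity would have nothing to act on. I expect the care to lie in justifying $\tova{\sme} \in \caa$ cleanly, via the sublattice property rather than a hands-on variation estimate, and in keeping the distinction between $\me \ac \sme$ and the weaker $\me \wac \sme$ sharp, since it is precisely the strong, uniform form of absolute continuity that powers the passage from $\tova{\sme}(T_m) \to 0$ to $\me(T_m) \to 0$.
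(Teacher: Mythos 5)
Your proof is correct. The paper itself gives no argument for this proposition --- it is stated with only a citation to Rao \cite[p.~163]{rao_theory_1983} --- and what you have written is precisely the standard proof found there: reduce $\sigma$-additivity to the vanishing of the tails $\me(T_m)$, use that $\tova{\sme}\in\caa$ (your justification via the sublattice property of $\caa$ is clean and consistent with the paper's toolkit) to get $\tova{\sme}(T_m)\to 0$, and then let the $\eps$--$\delta$ form of $\me\ac\sme$ transfer this to $\me$. Your closing remark correctly isolates why the implication fails for \twac{} in place of \tac{}, which is exactly the point of the remark following the proposition.
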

\begin{remark}
	The preceding proposition shows that one should focus on the notion of
	weak absolute continuity when studying \tme{}s that are continuous with respect to some \tsme{}.
\end{remark}
\begin{example}
	$\me$ from Example \ref{ex:dzero} is even \twac{} with respect to
	$\lem$. This is evident from the construction in Proposition
	\ref{prop:ex_densme} (take $\lambda:= \lem$ and $\ferm := \{0\}$).	
\end{example}
\begin{proposition}\label{thm:wac_totvar}
	Let $\me_1, \me_2 \in \baa$ be such that $\me_1 \wac \me_2$. If $\als \in \al$ such that $|\me_2|(\als) = 0$, then $|\me_1|(\als) = 0$.
\end{proposition}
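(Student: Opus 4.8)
The plan is to reduce the claim, which is phrased in terms of the total variations $|\me_1|$ and $|\me_2|$, to the pointwise definition of weak absolute continuity, using the partition characterisation of the total variation. The crucial point is that $\me_1 \wac \me_2$ controls $\me_1$ only on those sets where $|\me_2|$ vanishes, so the first step must be to spread the hypothesis $|\me_2|(\als) = 0$ over all subsets of $\als$ that belong to $\al$.

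First I would fix an arbitrary $\als' \in \al$ with $\als' \subset \als$. Any finite partition of $\als'$ extends to a finite partition of $\als$ by adjoining the set $\als \setminus \als' \in \al$, so the partition characterisation of the total variation gives
\[
  |\me_2|(\als') \leq |\me_2|(\als) = 0 \,,
\]
that is, $|\me_2|(\als') = 0$ for every such $\als'$. Invoking the hypothesis $\me_1 \wac \me_2$ for each of these sets then yields $\me_1(\als') = 0$; hence $\me_1$ vanishes on every element of $\al$ contained in $\als$.

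Finally I would reassemble the total variation of $\me_1$ from these pointwise values. By the partition characterisation,
\[
  |\me_1|(\als) = \sup \sum_{k=1}^m |\me_1(\als_k)| \,,
\]
where the supremum runs over all finite partitions $\{\als_k\}$ of $\als$ in $\al$. Since each $\als_k \subset \als$, the preceding step forces $\me_1(\als_k) = 0$, so every sum in the supremum is zero and therefore $|\me_1|(\als) = 0$, as required.

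I do not expect a genuine obstacle in this argument; the only subtlety is organisational. Both the hypothesis and the conclusion are formulated in terms of total variations, whereas weak absolute continuity is a statement about individual sets. The proof therefore proceeds by first descending from the total-variation hypothesis on $\als$ to the vanishing of $\me_1$ on all subsets of $\als$, and only afterwards rebuilding $|\me_1|(\als)$ through the supremum over partitions.
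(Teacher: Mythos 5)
Your argument is correct and follows essentially the same route as the paper: both proofs first use monotonicity of $|\me_2|$ to get $|\me_2|(\als') = 0$ for every $\als' \in \al$ with $\als' \subset \als$, then invoke $\me_1 \wac \me_2$ to conclude $\me_1$ vanishes on all such subsets, and finally reassemble $|\me_1|(\als)$ from these values. The only cosmetic difference is that you use the partition-supremum characterisation of $|\me_1|(\als)$ for the last step, whereas the paper uses the supremum formulas for $\me_1^+$ and $\me_1^-$ separately; both are stated earlier in the paper and give the same conclusion.
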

\begin{proof}
	Since $|\me_2|$ is monotone, 
	\begin{equation*}
		|\me_2(\als')| \leq |\me_2|(\als') \leq |\me_2|(\als) = 0
	\end{equation*}
	for all $\als' \in \al$ such that $\als' \subset \als$. Since
	\begin{equation*}
		\me_1^+(\als) = \sup \limits_{\substack{\als' \in \al\\\als' \subset \als}} \me_1(\als') = 0 
	\end{equation*}
	and a similar equation holds for $\me_1^-$
	\begin{equation*}
		|\me_1|(\als) = \me_1^+(\als) + \me_1^-(\als) = 0 \,.
	\end{equation*}
\end{proof}
The following proposition is the key to decompose \tme{}s into \tsme{}s which
are \twac{} with respect to some \tme{} and \tpfa{} \tme{}s. 
\begin{proposition}\label{thm:baw_normsublat}
	Let $\dom \subset \Rn$, $\al \subset \pos{\dom}$ be an algebra and
	$\metoo \in \baa$.

	Then $\baaw{\metoo}$ is a normal sublattice of $\baa$ and thus a boundedly complete vector lattice.
\end{proposition}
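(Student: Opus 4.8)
The plan is to reduce everything to a single convenient reformulation of membership. By Proposition~\ref{thm:wac_totvar}, a measure $\me \in \baa$ lies in $\baaw{\metoo}$ if and only if $|\me|(\als) = 0$ for every $\als \in \al$ with $|\metoo|(\als) = 0$: the forward implication is exactly Proposition~\ref{thm:wac_totvar}, and the converse is immediate from $|\me(\als)| \le |\me|(\als)$. Write $\mathcal{N} := \{\als \in \al \setpipe |\metoo|(\als) = 0\}$, which is closed under passing to subsets in $\al$ because $|\metoo|$ is monotone. With this characterisation, linearity is immediate from sub-additivity and homogeneity of the total variation, $|c\me_1 + \me_2| \le |c|\,|\me_1| + |\me_2|$. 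For the sublattice property I would use the infimum formula $(\me_1 \wedge \me_2)(\als) = \inf_{\als' \subset \als}\, \me_1(\als') + \me_2(\als \setminus \als')$: for $\als \in \mathcal{N}$ both $\als'$ and $\als \setminus \als'$ lie in $\mathcal{N}$, so each summand vanishes and $(\me_1 \wedge \me_2)(\als) = 0$; applying this to all subsets of a null set gives $\me_1 \wedge \me_2 \in \baaw{\metoo}$, and then $\me_1 \vee \me_2 = -((-\me_1) \wedge (-\me_2)) \in \baaw{\metoo}$ by linearity.

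For normality, condition~(1) is routine: if $|\me| \le |\me'|$ with $\me' \in \baaw{\metoo}$ and $\als \in \mathcal{N}$, then $|\me|(\als) \le |\me'|(\als) = 0$, so $\me \in \baaw{\metoo}$. The substance lies in condition~(2): given $\{\me_i\}_{i \in \mathcal{I}} \subset \baaw{\metoo}$ whose supremum $\me^\ast := \bigvee_i \me_i$ exists in $\baa$, I must show $\me^\ast \in \baaw{\metoo}$. First I would pass to the finite suprema $\me_F := \bigvee_{i \in F} \me_i$ over finite $F \subset \mathcal{I}$; these lie in $\baaw{\metoo}$ by the sublattice property, they form an increasing net ordered by inclusion of index sets, and they share their upper bounds with the original family, so $\bigvee_F \me_F = \me^\ast$.

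The key step, and the main obstacle, is to show that for such an increasing net bounded above the lattice supremum is computed setwise, i.e. $\me^\ast(\als) = \sup_F \me_F(\als)$ for every $\als \in \al$. To this end I would set $\nu(\als) := \sup_F \me_F(\als)$, which is finite since $\me_{F_0}(\als) \le \nu(\als) \le \me^\ast(\als)$ for a fixed $F_0$; finite additivity of $\nu$ follows because the supremum of the sum of two increasing nets equals the sum of the suprema, and boundedness follows by sandwiching $0 \le \nu - \me_{F_0} \le \me^\ast - \me_{F_0}$ and invoking the norm monotonicity of the Banach lattice $\baa$. Thus $\nu \in \baa$ is an upper bound of $\{\me_F\}$, giving $\me^\ast \le \nu$, while $\nu \le \me^\ast$ holds setwise, so $\nu = \me^\ast$. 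Consequently, for $\als \in \mathcal{N}$ each $\me_F(\als) = 0$ forces $\me^\ast(\als) = \sup_F \me_F(\als) = 0$, which proves $\me^\ast \in \baaw{\metoo}$ and hence normality. Finally, since $\baa$ is boundedly complete by Proposition~\ref{prop:ba_bcsubl} and a normal sublattice inherits bounded completeness — any family bounded above in $\baaw{\metoo}$ is bounded above in $\baa$, its supremum exists there, and condition~(2) returns it to $\baaw{\metoo}$ — the space $\baaw{\metoo}$ is itself a boundedly complete vector lattice.
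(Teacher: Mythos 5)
Your proof is correct, and for the decisive step --- condition (2) of normality --- it takes a genuinely different route from the paper. The paper takes the least upper bound $\me'$ supplied by bounded completeness of $\baa$ and argues by contradiction: if $\me'(\als) \neq 0$ for some $\metoo$-null $\als$, then $\me' - \tova{\reme{\me'}{\als}}$ is asserted to be a strictly smaller upper bound of the family, contradicting minimality. You instead pass to the upward-directed net of finite suprema and prove that the supremum of a bounded directed net in $\baa$ is computed setwise; vanishing on $\metoo$-null sets is then immediate. Your detour is longer but buys two things: it isolates a reusable order-continuity fact about $\baa$, and it replaces the one delicate inequality in the paper's argument, $\me_i \leq \me' - \tova{\reme{\me'}{\als}}$ (justified there only by the observation that $\me_i(\als) = 0$, and requiring extra care when $\reme{\me'}{\als}$ is not of one sign), by a transparent computation. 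You also verify the sublattice property explicitly, which the paper leaves implicit and which you in any case need so that the finite suprema stay in $\baaw{\metoo}$. Two small points: the appeal to norm monotonicity to see that $\nu$ is bounded is mildly circular, since it presupposes $\nu - \me_{F_0} \in \baa$; the setwise sandwich $0 \leq (\nu - \me_{F_0})(\als) \leq (\me^\ast - \me_{F_0})(\als)$ already yields boundedness directly. And in the sublattice step you only need $(\me_1 \wedge \me_2)(\als) = 0$ for $\metoo$-null $\als$, which your infimum formula gives at once. The remaining parts --- the reformulation of membership via Proposition~\ref{thm:wac_totvar}, condition (1), and the inheritance of bounded completeness --- coincide with the paper's reasoning.
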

\begin{proof}
	$\baaw{\metoo}$ is obviously a linear space. Let $\{\me_i\}_{i\in
	\mathcal{I}} \subset \baaw{\metoo}$ be such that there exists $\me \in \baa$ with 
	\begin{equation*}
		\me_i \leq \me \text{ for all } i \in \mathcal{I} \,.
	\end{equation*}
	By Proposition \ref{prop:ba_bcsubl}, $\baa$ is boundedly complete (cf.
	\cite[p. 44]{rao_theory_1983}). Hence, there exists $\me' \in \baa$ such that 
	\begin{equation*}
		\me_i \leq \me' \text{ for all } i \in \mathcal{I}
	\end{equation*}
	and if this holds true for another $\me'' \in \baa$ then $\me' \leq \me''$.

	Assume $\me' \notin \baaw{\metoo}$. Then there exists $\als \in \al$ such that
	\begin{equation*}
		|\metoo|(\als) = 0 \text{ but } \me'(\als) \neq 0 \,.
	\end{equation*}
	Now, $|\me'\lfloor \als| \in \baa$. Whence $\me' - |\me'\lfloor \als|
	\in \baa$. Since $\me_i(\als) = 0$ 
	\begin{equation*}
		\me_i \leq \me' - |\me' \lfloor \als| < \me' \text{ for all } i \in \mathcal{I} \,,	
	\end{equation*}
	in contradiction to the minimality of $\me'$. Hence $\me' \in
	\baaw{\metoo}$.

	Now let $\me' \in \baa$ and $\me \in \baaw{\metoo}$ such that $|\me'|
	\leq |\me|$. Let $\als \in \al$ be such that $|\metoo|(\als) = 0$. Then 
	\begin{equation*}
		|\me'(\als)| \leq |\me'|(\als) \leq |\me|(\als) = 0
	\end{equation*}
	by Proposition \ref{thm:wac_totvar}. Hence $\me' \in \baaw{\metoo}$.
	Therefore, $\baaw{\metoo}$ is a normal sublattice and thus a boundedly
	complete vector lattice.
\end{proof}
The proposition above enables the decomposition of \tme s into \tpfa{} parts
and \tsme{}s, analogously to Proposition \ref{prop:yosh_hew_dec}.
\begin{theorem}\label{thm:dec_pfa_wac}
	Let $\dom \subset \Rn$, $\al \subset \pos{\dom}$ be an algebra and
	$\metoo \in \baA{\al}$.

	Then for every $\me \in \baaw{\metoo}$ there exist unique $\me_c \in \caa
	\cap \baaw{\metoo}$, $\me_p \in \caa^\perp \cap \baaw{\metoo}$ such that
	\begin{equation*}
		\me = \me_c + \me_p \,.
	\end{equation*}
\end{theorem}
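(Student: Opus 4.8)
The plan is to realise the desired splitting as a single instance of the Riesz Decomposition Theorem (Proposition \ref{thm:riesz_dec}), but carried out \emph{inside} the lattice $\baaw{\metoo}$ rather than inside $\baa$. The reason this works is that all the necessary ingredients are already available: $\baa$ is a boundedly complete vector lattice (Proposition \ref{prop:ba_bcsubl}), $\caa$ is one of its normal sublattices (Proposition \ref{prop:yosh_hew_dec}), and $\baaw{\metoo}$ is another normal sublattice of $\baa$ that is itself boundedly complete (Proposition \ref{thm:baw_normsublat}). The whole argument is then a matter of marshalling these three facts in the right order.

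First I would set $L := \baa$, $L_1 := \caa$ and $L_2 := \baaw{\metoo}$ and invoke Proposition \ref{thm:lat_capstab}. This immediately yields that $\caa \cap \baaw{\metoo}$ is a normal sublattice of $\baaw{\metoo}$ and, crucially, that its orthogonal complement \emph{computed inside $\baaw{\metoo}$} equals $\caa^\perp \cap \baaw{\metoo}$. Securing this identification is what lets us read off the target summand $\me_p \in \caa^\perp \cap \baaw{\metoo}$ with no further computation.

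Next, since $\baaw{\metoo}$ is a boundedly complete vector lattice and $\caa \cap \baaw{\metoo}$ is a normal sublattice of it, I would apply the Riesz Decomposition Theorem (Proposition \ref{thm:riesz_dec}) with ambient lattice $\baaw{\metoo}$ and normal sublattice $S := \caa \cap \baaw{\metoo}$. For any $\me \in \baaw{\metoo}$ this produces unique elements $\me_c \in \caa \cap \baaw{\metoo}$ and $\me_p \in S^\perp$ with $\me = \me_c + \me_p$; by the previous step $S^\perp = \caa^\perp \cap \baaw{\metoo}$, which is exactly the claimed decomposition, delivering both the uniqueness and the membership of the two summands in one stroke.

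There is no deep obstacle here, as the statement is essentially bookkeeping once the abstract machinery is in place. The one point demanding care, and the step I would double-check, is that the orthogonal complement must be understood relative to $\baaw{\metoo}$ and not relative to $\baa$: applying Proposition \ref{thm:riesz_dec} naively with ambient lattice $\baa$ would split $\me$ into a $\sigma$-additive part and a pure part whose pure component need not remain in $\baaw{\metoo}$. Proposition \ref{thm:lat_capstab} is precisely the device that reconciles the two viewpoints by identifying the relative orthogonal complement as $\caa^\perp \cap \baaw{\metoo}$, so this is where the argument genuinely rests.
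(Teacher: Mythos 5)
Your proof is correct and follows exactly the paper's own argument: apply Proposition \ref{thm:lat_capstab} to identify $\caa \cap \baaw{\metoo}$ as a normal sublattice of $\baaw{\metoo}$ with relative orthogonal complement $\caa^\perp \cap \baaw{\metoo}$, then invoke the Riesz Decomposition Theorem inside the boundedly complete lattice $\baaw{\metoo}$. Your remark about taking the orthogonal complement relative to $\baaw{\metoo}$ rather than $\baa$ is precisely the point the paper's proof also rests on.
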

\begin{proof}
	Since $\baaw{\metoo}$ and $\caa$ are normal sublattices of $\baa$, Proposition \ref{thm:lat_capstab} yields that
	\begin{equation*}
		\caa \cap \baaw{\metoo}
	\end{equation*}
	is a normal sublattice of $\baaw{\metoo}$ whose orthogonal complement is 
	\begin{equation*}
		\caa^\perp \cap \baaw{\metoo} \,.
	\end{equation*}
	This, together with Riesz's decomposition Proposition
	\ref{thm:riesz_dec}, yields the statement of the theorem.
\end{proof}
\begin{example}\label{ex:dzero_pfa}
	Since the \tme{} $\me$ from Example \ref{ex:dzero} is positive and
	$\me_c \perp \me_p$, using the additivity of the total variation on
	orthogonal element (cf. \cite[p. 25]{rao_theory_1983}) yields
	\begin{equation*}
		0 \leq \tova{\me_c} \leq \tova{\me_c} + \tova{\me_p} = \tova{\me} = \me \,.
	\end{equation*}
	Hence, for every $\delta > 0$
	\begin{equation*}
		\tova{\me_c}(\ball{0}{\delta}^c)  = 0 \,.
	\end{equation*}
	Thus
	\begin{equation*}
		\tova{\me_c}(\dom \setminus \{0\}) = \lim \limits_{\delta \downarrow 0}
		\tova{\me_c}(\ball{0}{\delta}^c) = 0  \,.
	\end{equation*}
	But $\tova{\me_c}(\{0\}) \leq \me(\{0\})=0$. Hence
	\begin{equation*}
		\tova{\me_c}(\dom) = 0 
	\end{equation*}
	and $\me = \me_p$ is \tpfa{}.
\end{example}
When $\metoo$ is a \tsme{}, the structure of $\me_c$ is well known by the Radon
Nikodym theorem (cf. \cite[p. 128ff]{halmos_measure_1974}).
\begin{proposition}{Radon-Nikodym Theorem\\}\label{thm:radon_nik}
	Let $\dom \subset \Rn$ and $\sal \subset \pos{\dom}$ be a \tsal{}. Furthermore, let $\sme \in \cas$ and $\me \in \cas$ be such that $\me \wac \sme$. Then there exists $\fun \in \lpss{1}$ such that
	\begin{equation*}
		\me(\als) = \I{\als}{\fun}{\sme}
	\end{equation*}
	for every $\als \in \sal$.
\end{proposition}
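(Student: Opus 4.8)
The plan is to reduce the signed statement to the case of two positive measures and then carry out von Neumann's Hilbert-space argument. First I would set $\lambda := \tova{\sme}$, a bounded positive $\sigma$-additive measure; since $\tova{\sme}(\als)=0$ exactly when $|\sme|(\als)=0$, the hypothesis $\me \wac \sme$ is equivalent to $\me \wac \lambda$, and Proposition \ref{thm:wac_totvar} then yields $\posi{\me}\wac\lambda$ and $\nega{\me}\wac\lambda$. If the positive case produces nonnegative densities $g^+,g^-\in L^1(\lambda)$ with $\posi{\me}(\als)=\int_\als g^+\,d\lambda$ and $\nega{\me}(\als)=\int_\als g^-\,d\lambda$, then $\me(\als)=\int_\als(g^+-g^-)\,d\lambda$. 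A Hahn decomposition $\dom=P\cup N$ for $\sme$ satisfies $d\lambda=(\chi_P-\chi_N)\,d\sme$, so $\fun:=(g^+-g^-)(\chi_P-\chi_N)$ lies in $\lpss{1}$ and represents $\me$ against $\sme$. This reduces everything to the following: given positive bounded $\me,\lambda$ with $\me\wac\lambda$, find $f\ge 0$ in $L^1(\lambda)$ with $\me(\als)=\int_\als f\,d\lambda$.

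For the positive core I would introduce the finite positive measure $\phi:=\me+\lambda$ and consider the linear functional $g\mapsto\int g\,d\me$ on $L^2(\phi)$. Cauchy--Schwarz together with $\me\le\phi$ shows this functional is bounded, so the Riesz representation theorem on the Hilbert space $L^2(\phi)$ yields $h\in L^2(\phi)$ with $\int g\,d\me=\int gh\,d\phi$ for all $g\in L^2(\phi)$, equivalently $\int g(1-h)\,d\me=\int gh\,d\lambda$. Testing this identity against indicators of the sets $\{h<0\}$ and $\{h>1\}$ and using $\me,\lambda\ge 0$, I would first establish $0\le h\le 1$ $\phi$-almost everywhere.

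The decisive step is to show $\phi(\{h=1\})=0$: inserting $g=\chi_{\{h=1\}}$ collapses the left side to $0$ and the right side to $\lambda(\{h=1\})$, forcing $\lambda(\{h=1\})=0$, whence $\me(\{h=1\})=0$ by weak absolute continuity, so $\phi(\{h=1\})=0$ and $h<1$ almost everywhere. I would then set $f:=h/(1-h)$ and test the identity with the bounded functions $g_n:=\chi_\als(1+h+\dots+h^n)\in L^2(\phi)$; the left side equals $\int_\als(1-h^{n+1})\,d\me$ and tends to $\me(\als)$ by dominated convergence, while the right side increases to $\int_\als f\,d\lambda$ by monotone convergence. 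This gives $\me(\als)=\int_\als f\,d\lambda$ for every $\als\in\sal$, and choosing $\als=\dom$ shows $f\in L^1(\lambda)$.

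I expect the main obstacle to be precisely the identity $\phi(\{h=1\})=0$, which is exactly where the hypothesis $\me\wac\lambda$ is indispensable: without it a part of $\me$ singular with respect to $\lambda$ would survive on $\{h=1\}$ and no density could exist. A secondary technical point is the limiting argument defining $f=h/(1-h)$, where $f$ need not be bounded, so the monotone- and dominated-convergence passages must be justified with the truncations $g_n$ rather than by a single illegitimate substitution $g=\chi_\als/(1-h)$.
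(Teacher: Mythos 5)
The paper does not prove this proposition at all --- it is imported verbatim from Halmos \cite[p.~128ff]{halmos_measure_1974} --- so there is no internal proof to compare against; your argument is a correct, self-contained substitute. You follow von Neumann's Hilbert-space route: reduce to positive $\me$ and $\lambda=\tova{\sme}$ via the Jordan and Hahn decompositions, represent $g\mapsto \int g\,d\me$ on $L^2(\me+\lambda)$, squeeze $0\le h\le 1$, kill $\{h=1\}$ using weak absolute continuity, and sum the geometric series. The steps check out: since $\sme$ and $\me$ are $\sigma$-additive on a $\sigma$-algebra, the classical integration theory applies --- in particular $L^2(\me+\lambda)$ is complete, which the Riesz representation step silently needs and which is worth saying explicitly, given that the paper elsewhere stresses that $L^p$ over merely finitely additive measures fails to be complete --- and for finite $\sigma$-additive measures the hypothesis $\me\wac\sme$ is precisely the null-set condition the classical theorem requires, so no gap opens between the paper's notion of weak absolute continuity and the standard one. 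You also correctly identify $\lambda(\{h=1\})=0\Rightarrow\me(\{h=1\})=0$ as the unique point where the hypothesis enters, and the truncations $g_n=\chi_{\als}(1+h+\dots+h^n)$ legitimately replace the illegal substitution $g=\chi_{\als}/(1-h)$. Halmos's own proof in the cited source is the older exhaustion/Hahn-decomposition argument; yours trades that bookkeeping for Hilbert-space duality, which is shorter but leans on $L^2$ completeness. The only point to tighten: fix a $\sal$-measurable representative of $h$ before forming $\{h<0\}$, $\{h>1\}$, $\{h=1\}$, so that these sets lie in $\sal$ and the weak absolute continuity hypothesis, which is stated only for sets in $\sal$, can actually be invoked.
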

The structure of $\me_p$ is described by the following proposition taken from
\cite[p. 244]{rao_theory_1983} (cf. \cite[p. 56]{yosida_finitely_1951}).
\begin{remark}
	The following results are stated for \tsme{}s $\sme \geq 0$. They also
	hold for arbitrary \tsme{}s $\sme$ when using $\tova{\sme}$.
\end{remark}
\begin{proposition}\label{prop:pfa_iff_supseq}
	Let $\dom\subset \Rn$, $\sal \subset \pos{\dom}$ be a \tsal{} and $\sme
	\in \cas$, $\sme \geq 0$.  Then $\me \in \basws$ is \tpfa{} if and only if there exists a decreasing sequence $\{A_k\}_{k\in \N}\subset \sal$ such that
	\begin{equation*}
		\sme(\als_k) \xrightarrow{k \to \infty} 0
	\end{equation*}
	and for all $k \in \N$
	\begin{equation*}
		|\me_p|(\als_k^c) = 0 \,.
	\end{equation*}
\end{proposition}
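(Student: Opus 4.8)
The plan is to work throughout with $\mu:=\tova{\me}$ and to use the decomposition of Proposition \ref{prop:yosh_hew_dec}: writing $\mu_c\in\caa$ for the \tsme{} part of $\mu$, one has that $\mu_c$ is the largest \tsme{} minorant of $\mu$, and $\me\in\caa^\perp$ iff $\mu\in\caa^\perp$ (normality of $\caa^\perp$, together with $\caa\cap\caa^\perp=\{0\}$) iff $\mu_c=0$ iff $\mu$ has no nonzero \tsme{} minorant. Since $\tova{\me_p}=\mu$ precisely in the pure case, I read the condition $\tova{\me_p}(\als_k^c)=0$ as $\mu(\als_k^c)=0$. I will also reduce the existence of the sequence to the single claim
\[
d:=\inf\{\sme(\als)\setpipe \als\in\sal,\ \mu(\als^c)=0\}=0,
\]
since once $d=0$ I may pick, for each $k$, a set of $\sme$-measure below $1/k$ on whose complement $\mu$ vanishes, and intersect finitely many of them: finite subadditivity of $\mu$ preserves $\mu(\cdot^c)=0$ and $\sme$ only decreases, yielding a decreasing sequence $\als_k$ with $\sme(\als_k)\to0$ and $\mu(\als_k^c)=0$.

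For the direction in which the sequence is given, I will show that every \tsme{} minorant of $\mu$ vanishes. Let $\nu\in\caa$ with $0\le\nu\le\mu$. Since $\me\wac\sme$, Proposition \ref{thm:wac_totvar} gives $\mu(E)=0$ whenever $\sme(E)=0$, hence $\nu\wac\sme$, and Radon–Nikodym (Proposition \ref{thm:radon_nik}) furnishes $\fun\in\lpss{1}$ with $\nu(\als)=\I{\als}{\fun}{\sme}$ for all $\als$. As $\nu(\als_k^c)\le\mu(\als_k^c)=0$, one has $\nu(\dom)=\nu(\als_k)=\I{\als_k}{\fun}{\sme}$ for every $k$; letting $k\to\infty$, dominated convergence and continuity from above of the finite measure $\sme$ yield $\nu(\dom)=\I{\bigcap_k\als_k}{\fun}{\sme}=0$, because $\sme(\bigcap_k\als_k)=\lim_k\sme(\als_k)=0$. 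Thus $\mu_c=0$ and $\me$ is \tpfa{}.

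For the converse, assume $\me$ is \tpfa{}, so $\mu_c=0$. The first step is a quantitative concentration estimate: for all $\eps,\eta>0$ there is $\als$ with $\sme(\als)<\eps$ and $\mu(\als^c)<\eta$. The Riesz formula (Proposition \ref{thm:riesz_dec}) gives $\mu_c=\bigvee_{\sme'\in\caa}\mu\wedge\tova{\sme'}$, so $\mu\wedge(t\sme)\le\mu_c=0$ for every $t>0$. By the lattice identity $(\mu\wedge t\sme)(\dom)=\inf_{\als'\subset\dom}\mu(\als')+t\,\sme(\dom\setminus\als')$ this infimum equals $0$; choosing $t:=\eta/\eps$ and $\als'$ with $\mu(\als')+\frac{\eta}{\eps}\sme(\dom\setminus\als')<\eta$, the set $\als:=\dom\setminus\als'$ satisfies $\mu(\als^c)=\mu(\als')<\eta$ and $\sme(\als)=\sme(\dom\setminus\als')<\eps$. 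The same estimate applies to every restriction $\reme{\mu}{R}$, which is again \tpfa{} (normality of $\caa^\perp$) and \twac{} with respect to $\sme$.

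The main obstacle will be upgrading this near-concentration to the exact condition $\mu(\als^c)=0$, which I will achieve by iteration. Fix $\eps>0$, set $R_{-1}:=\dom$, and inductively apply the estimate to $\reme{\mu}{R_{j-1}}$ to obtain $G_j\subset R_{j-1}$ with $\sme(G_j)<\eps\,2^{-j-2}$ and $\mu(R_{j-1}\setminus G_j)<\eta_j$, where $\eta_j\downarrow0$; put $R_j:=R_{j-1}\setminus G_j$. Then $\als:=\bigcup_j G_j$ has $\sme(\als)<\eps$, while $\als^c=\bigcap_j R_j$ with the $R_j$ decreasing and $\mu(R_j)=\mu(R_{j-1}\setminus G_j)<\eta_j$; monotonicity of the nonnegative $\mu$ then gives $\mu(\als^c)\le\inf_j\mu(R_j)=0$. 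Hence such $\als$ exist for every $\eps$, so $d=0$ and the sequence is obtained as in the first paragraph. The delicate points are exactly that the $\sme$-masses are summable (so the carrier $\als$ stays small) while the residual $\mu$-masses are driven to $0$, and that only finite additivity and monotonicity of $\mu$—not the $\sigma$-additivity it lacks—are used in passing to the countable intersection $\bigcap_j R_j$.
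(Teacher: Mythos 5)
Your proof is correct. Note first that the paper itself gives no proof of this proposition: it is quoted from Rao \cite[p.~244]{rao_theory_1983} and Yosida--Hewitt \cite[p.~56]{yosida_finitely_1951}, so there is no in-text argument to compare against; what you have written is a sound, self-contained reconstruction of the classical Yosida--Hewitt argument. The sufficiency direction is handled exactly as it should be: a $\sigma$-additive minorant $\nu$ of $\tova{\me}$ inherits weak absolute continuity with respect to $\sme$ via Proposition \ref{thm:wac_totvar}, Radon--Nikodym (Proposition \ref{thm:radon_nik}) applies because on a \tsal{} with $\sme$ finite weak absolute continuity of a \tsme{} suffices, and dominated convergence along the aura sequence kills $\nu$. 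For necessity, the key identity $(\tova{\me}\wedge t\sme)(\dom)=\inf_{\als'}\left[\tova{\me}(\als')+t\,\sme(\dom\setminus\als')\right]=0$ for every $t>0$ (from the Riesz formula in Proposition \ref{thm:riesz_dec} and the infimum formula for $\wedge$) gives the $(\eps,\eta)$ near-concentration, and your iteration over the residual sets $R_j$ --- which remain in $\caa^\perp$ by normality --- upgrades this to exact concentration using only monotonicity of $\tova{\me}$ on the countable intersection and $\sigma$-subadditivity of $\sme$ on the countable union; this is precisely the delicate point and you handle it correctly. Two minor remarks: your reading of the condition $\tova{\me_p}(\als_k^c)=0$ as $\tova{\me}(\als_k^c)=0$ is the right (indeed the only tenable) interpretation, since with $\me_p$ taken literally the ``if'' direction would fail for any nonzero $\sigma$-additive $\me$; and in the induction the set produced by the estimate should formally be intersected with $R_{j-1}$ before being called $G_j$, which you do implicitly and which costs nothing.
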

Intuitively speaking, \twac{} \tme{}s are \tpfa{} if and only if they concentrate in the vicinity of a set of \tme{} zero.
Reviewing Example \ref{ex:dzero}, the support (cf. \cite[p.30]{Ambrosio2000}) of the \tme{} can be seen to lie
outside of $\dom\setminus \{0\}$. Yet the construction of the \tme{} would
still work on this set. Hence, it is possible for a \tpfa{} \tme{} to have support outside
of its domain of definition. This necessitates the following definition of
\tcor{}.
\begin{definition}
	Let $\dom \subset \Rn$, $\al \subset \pos{\dom}$ be an \tal{} containing every relatively open set in $\dom$. Furthermore let $\me \in \baa$. Then the set
\begin{equation*}
	\cor{\me} := \{ \eR \in \Rn\setpipe |\me|(\nhd \cap
	\dom) > 0, \forall \nhd \subset \Rn, \nhd \text{ open}, \eR\in \nhd\} 
	\end{equation*}
	\nomenclature[m]{$\cor{\me}$}{\tcor{} of $\me$}
	is called \textbf{\tcor}\index{core@\tcor{}} of $\me$.

	 Let $d\in [0,n]$ be the Hausdorff dimension of $\cor{\me}$. Then $d$
	 is called \textbf{\tcordimo{\me}}\index{core dimension@\tcordimo{\me}} and $\me$ is called \textbf{\tdd{d}}.
\end{definition}
\begin{remark}
	Note that there is a slight difference to the notion of
	\textit{support} of a measure as defined in classic measure theory (cf.
	\cite[p. 60]{federer_geometric_1996}). 
	The \tcor{} of a measure is not necessarily contained in $\dom$, the support of a \tsme{} is.
\end{remark}
\begin{example}
	The measure $\me$ from Example \ref{ex:dzero} has
	\begin{equation*}
		\cor{\me} = \{0\}
	\end{equation*}
	and is thus \tdd{0}.
\end{example}
Now, an example for a \tdme{} with a larger core is given. Note that in this
thesis
\begin{equation*}
	\dnhd{\ferm}{\delta} := \distf{\dom}^{-1}((-\infty,\delta))  \text{
	for } \ferm \subset \Rn \,.
\end{equation*}
\begin{example}
	Let $\dom \subset \Rn$ be open, $d\in [0,n)$ and $\ferm \subset
	\cl{\dom}$ be closed with Hausdorff dimension $d$. Then there
	exists a \tpfa{} \tme{} $\me \in \bawl{\dom}$, $\me \geq 0$ such that for every $\bals \in
	\bor{\dom}$
	\begin{equation*}
		\me(\bals) = \lim \limits_{\delta\downarrow 0} \frac{\lem(\bals
			\cap \dnhd{\ferm}{\delta}\cap \dom)}{\lem(\dnhd{\ferm}{\delta}
		\cap \dom)} =: \denss{\ferm}{\bals} \,,
		\nomenclature[m]{$\denss{\ferm}{\bals}$}{$\lim
			\limits_{\delta\downarrow 0} \frac{\lem(\bals
			\cap \dnhd{\ferm}{\delta}\cap \dom)}{\lem(\dnhd{\ferm}{\delta}\cap \dom)}$} 
	\end{equation*}
	if this limit exists. Here, $\dnhd{\ferm}{\delta}$ is the open 
	$\delta$-neighbourhood of $\ferm$. Furthermore
	\begin{equation*}
		\cor{\me} = \ferm 
	\end{equation*}
	and $\me$ is thus \tdd{d}.

	The existence of this measure is evident by Proposition
	\ref{prop:ex_densme} (take $\lambda:= \lem$).
\end{example}
\begin{proposition}
	Let $\dom \subset \Rn$ and $\al \subset \pos{\dom}$ be an \tal{}
	containing every relatively open set and $\me \in \baa$. Then
	$\cor{\me}$ is a closed set in $\Rn$.
\end{proposition}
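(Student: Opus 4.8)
The plan is to prove the equivalent statement that the complement $\Rn \setminus \cor{\me}$ is open. The whole argument rests on the observation that membership in $\cor{\me}$ is defined by a condition quantified over \emph{all} open neighbourhoods of a point; consequently, a single open set on which $|\me|$ vanishes simultaneously disqualifies every one of its points from belonging to the core. This is the standard mechanism by which a set carved out by a neighbourhood condition turns out to be closed.

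First I would fix $\eR \in \Rn \setminus \cor{\me}$ and unravel the definition. Negating the requirement that $|\me|(\nhd \cap \dom) > 0$ for all open $\nhd \ni \eR$ produces an open set $\nhd \subset \Rn$ with $\eR \in \nhd$ and $|\me|(\nhd \cap \dom) = 0$, the equality (rather than merely $\leq 0$) being forced by the fact that $|\me|$ is a nonnegative set function. At this step I would invoke the standing hypothesis that $\al$ contains every relatively open set in $\dom$: this guarantees $\nhd \cap \dom \in \al$, so that $|\me|(\nhd \cap \dom)$ is actually defined and the negation above is meaningful.

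Next I would verify that this one neighbourhood $\nhd$ lies entirely inside the complement of the core. Indeed, for an arbitrary $\eR' \in \nhd$, the set $\nhd$ is itself an open set containing $\eR'$ on which $|\me|(\nhd \cap \dom) = 0$, so the universally quantified condition defining $\cor{\me}$ already fails at $\eR'$; hence $\eR' \notin \cor{\me}$. This shows $\nhd \subset \Rn \setminus \cor{\me}$, and since $\eR$ was an arbitrary point of the complement, $\Rn \setminus \cor{\me}$ is open and $\cor{\me}$ is closed.

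As for the difficulty, there is essentially no genuine obstacle here: the result is a routine point-set topology fact about sets defined through a neighbourhood condition. The only place demanding care is the measurability bookkeeping just noted, namely checking that the sets $\nhd \cap \dom$ on which $|\me|$ is evaluated really belong to $\al$, which is exactly what the hypothesis on $\al$ secures.
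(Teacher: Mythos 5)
Your proposal is correct and follows essentially the same route as the paper: take a point outside the core, extract an open set $\nhd$ with $|\me|(\nhd\cap\dom)=0$, and observe that every point of $\nhd$ is thereby excluded from the core, so the complement is open. Your use of $\nhd$ itself as the witness neighbourhood for each $\eR'\in\nhd$ is in fact marginally cleaner than the paper's step, which passes through the intersection $\nhd\cap\nhd'$ with an arbitrary neighbourhood $\nhd'$ of $\eR'$.
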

\begin{proof}
	Set $\bals := \cor{\me}$ and let $\eR \in \bals^c$. Then there is an open neighbourhood $\nhd \subset \Rn$ of $\eR$ such that
			\begin{equation*}
				|\me|(\nhd \cap \dom) = 0 \,.
			\end{equation*}
			Now let $\eR' \in \nhd$ and $\nhd{}'\subset \Rn$ be an open neighbourhood of $\eR'$. Then
			\begin{equation*}
				|\me|(\nhd \cap \nhd' \cap \dom) \leq |\me|(\nhd \cap \dom) = 0 \,.
			\end{equation*}
			Thus, $\eR' \in \bals^c$. Since $\eR$ was arbitrary, it follows that for every $\eR \in \bals^c$ there exists an open neighbourhood $\nhd \subset \Rn$ of $\eR$ such that $\nhd \subset \bals^c$, whence $\bals^c$ is open and $\bals$ closed.
\end{proof}
On bounded domains, the \tcor{} is non-empty.
\begin{proposition}\label{prop:cor_nempty}
	Let $\dom \subset \Rn$ be bounded, $\al \subset \pos{\dom}$ be an \tal{} containing every relatively open set in $\dom$ and $\me\in \baa,\me \neq 0$. Then $\cor{\me}$ is non-empty and for every $\delta > 0$
	\begin{equation*}
		|\me|\left(\dom \cap \left((\cor{\me})_\delta\right)^c\right) = 0 \,.
	\end{equation*}
\end{proposition}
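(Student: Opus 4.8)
The plan is to exploit the compactness of $\cl{\dom}$, which follows from the boundedness of $\dom$, together with the monotonicity and finite subadditivity of the positive finitely additive set function $|\me|$. The same covering mechanism drives both assertions, so I would first record the two elementary facts I rely on: that $|\me| = \me^+ + \me^-$ is monotone, and that for any finite cover $E \subset \bigcup_{k=1}^m \als_k$ with all sets in $\al$ one has $|\me|(E) \leq \sum_{k=1}^m |\me|(\als_k)$, obtained by disjointifying the cover inside the algebra and invoking finite additivity of $|\me|$. I would also note at the outset that $\me \neq 0$ forces $|\me|(\dom) > 0$, since otherwise monotonicity would make $|\me|$, and hence $\me$, vanish on all of $\al$.

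For non-emptiness I would argue by contradiction. If $\cor{\me} = \emptyset$, then by definition every $\eR \in \cl{\dom}$ admits an open neighbourhood $\nhd_{\eR}$ with $|\me|(\nhd_{\eR} \cap \dom) = 0$. The family $\{\nhd_{\eR}\}$ is an open cover of the compact set $\cl{\dom}$, so finitely many $\nhd_{\eR_1}, \dots, \nhd_{\eR_m}$ already cover $\dom$. Each $\nhd_{\eR_k} \cap \dom$ is relatively open and hence lies in $\al$, as does $\dom$ itself; applying finite subadditivity to $\dom = \bigcup_{k=1}^m (\nhd_{\eR_k} \cap \dom)$ yields $|\me|(\dom) = 0$, the desired contradiction.

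For the second claim I would fix $\delta > 0$ and set $E := \dom \cap ((\cor{\me})_\delta)^c$. First I would check $E \in \al$: writing $E = \dom \setminus (\dom \cap (\cor{\me})_\delta)$ exhibits it as a difference of two members of $\al$, the subtrahend being relatively open. Since $((\cor{\me})_\delta)^c$ is closed, $\cl{E} \subset C := \cl{\dom} \cap ((\cor{\me})_\delta)^c$, and $C$ is closed and bounded, hence compact. Every point of $C$ lies at distance at least $\delta$ from $\cor{\me}$ and is therefore outside the core, so it carries an open neighbourhood on which $|\me|(\cdot \cap \dom) = 0$; extracting a finite subcover of $C \supset E$ and applying monotonicity together with finite subadditivity exactly as before gives $|\me|(E) = 0$.

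The individual steps are routine; the main point requiring care is the bookkeeping that keeps every set inside the algebra $\al$ — in particular recognising that although $E$ is relatively closed rather than open it is still a difference of relatively open sets, and that each $\nhd_{\eR_k} \cap \dom$ used in the covering estimate genuinely belongs to $\al$. The only substantive analytic ingredient is the compactness of $\cl{\dom}$, which is precisely where the boundedness hypothesis enters; without it the covers need not reduce to finite ones and the conclusion can fail.
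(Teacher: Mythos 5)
Your proposal is correct and follows essentially the same route as the paper's proof: both parts rest on extracting a finite subcover of a compact set ($\cl{\dom}$, respectively $\cl{\dom}\cap((\cor{\me})_\delta)^c$) by neighbourhoods on which $\tova{\me}(\cdot\cap\dom)$ vanishes, and then invoking finite subadditivity of $\tova{\me}$. Your bookkeeping of which sets lie in $\al$ and your choice of the compact set to cover are, if anything, slightly more careful than the paper's, but the argument is the same.
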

\begin{proof} Set $\bals := \cor{\me}$.
	Assume $\cor{\me}$ was empty. Then, by compactness of $\overline{\dom}$
	there exists an open covering $\{\nhd_k\}_{k=\mN}^m$ of $\cl{\dom}$ such that for $k=\mN, ..., m$
	\begin{equation*}
		|\me|(\nhd_k \cap \dom ) = 0 \,.
	\end{equation*}
	But then
	\begin{equation*}
		|\me|(\dom) \leq \sum \limits_{k=\mN}^m |\me|(\nhd_k \cap \dom) = 0
	\end{equation*}
	in contradiction to $\me \neq 0$.

	Now, let $\delta > 0$. For every $\eR \in \overline{(\bals_\delta)^c}^{\Rn}$ there is a $0 < \delta_{\eR} < \frac{\delta}{2}$ such that
	\begin{equation*}
		|\me|\left(B\left(\eR, \delta_{\eR}\right) \cap \dom \right) = 0 \,.
	\end{equation*}
	Otherwise, $\eR \in \cor{\me}$.
	Now
	\begin{equation*}
		\left\{B\left(\eR,\delta_{\eR}\right)\right\}_{\eR \in \overline{\left(\bals_\delta\right)^c}^{\Rn}}
	\end{equation*}
	is an open covering of 
	\begin{equation*}
		\overline{\left(\bals_\delta\right)^c}^{\Rn}  \cap \dom
	\end{equation*}
	Since $\dom$ is relatively compact in $\Rn$, there exists a finite open sub-covering
	\begin{equation*}
		\{B\left(\eR_l,\delta_{\eR_l}\right) \}_{l= \mN}^m 
	\end{equation*}
	of 
	\begin{equation*}
		(\bals_\delta)^c \cap \dom \,.
	\end{equation*}
	Hence
	\begin{equation*}
		|\me|\left(\left(\bals_\delta\right)^c \cap \dom \right) \leq \sum \limits_{l=\mN}^m |\me|\left(B\left(\eR_l, \delta_{\eR_l}\right) \cap \dom\right) = 0 \,.
	\end{equation*}
\end{proof}
\begin{remark}
	If $\dom$ is unbounded, the statement of the preceding proposition need
	not be true. The \tme{}s in Example 10.4.1 in \cite[p.
	245]{rao_theory_1983} can be shown to have empty \tcor{}, since they
	concentrate near infinity.
\end{remark}
The \tcor{} itself does not give all information on the way in which a \tpfa{}
\tme{} concentrates. Hence, the sequences from Proposition
\ref{prop:pfa_iff_supseq}
are investigated further.
\begin{definition}
	Let $\dom \subset \Rn$, $\sal \subset \pos{\dom}$ be a \tsal{}, $\sme
	\in \cas$, $\sme \geq 0$ and $\me_p \in \basws$ be \tpfa{}. Then every $\als\in \sal$ such that
	\begin{equation*}
		|\me_p|(\als^c) = 0
	\end{equation*}
	is called \textbf{\taur}\index{aura@\taur} of $\me_p$. 

	Any decreasing sequence $\{\als_k\}_{k\in \N} \subset \sal$ of auras
	for $\me_p$ such that 
	\begin{equation*}
		\sme(\als_k) \xrightarrow{k\to \infty} 0
	\end{equation*}
	is called \textbf{\tsupseq}\index{aura seq@\tsupseq{}}.
\end{definition}
Now, it is shown that any \tsupseq{} can be restricted to neighbourhoods of the \tcor{}.
\begin{proposition}\label{prop:closed_supseq}
	Let $\dom \subset \Rn$ be bounded and $\sal \subset \pos{\dom}$ be a
	\tsal{} containing every relatively open set in $\dom$. Furthermore,
	let $\sme \in \cas$ with $\sme \geq 0$ and $\me_p \in \basws$ be \tpfa{}. Then for every \tsupseq{} $\{\als_k\}_{k\in \N} \subset \sal$ of $\me_p$ the sequence
	\begin{equation*}
		\{\als'_k\}_{k \in \N} := \left\{\als_k \cap
		\dnhd{(\cor{\me_p})}{\frac{1}{k}} \right\} \subset \sal
	\end{equation*}
	is an \tsupseq{} of $\me_p$ with 
	\begin{equation*}
		\cor{\me_p} = \bigcap \limits_{k \in \N} \overline{\als'_k}^{\Rn} \,.
	\end{equation*}
\end{proposition}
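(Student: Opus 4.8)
The plan is to verify that $\{\als'_k\}_{k\in\N}$ meets the three requirements in the definition of an \tsupseq{} and then to establish the identity $\cor{\me_p}=\bigcap_{k\in\N}\overline{\als'_k}^{\Rn}$ by proving two inclusions. Throughout set $\ferm:=\cor{\me_p}$; recall that $\ferm$ is closed and, as $\dom$ is bounded, non-empty whenever $\me_p\neq0$ by Proposition \ref{prop:cor_nempty} (the case $\me_p=0$ being trivial, since then $|\me_p|\equiv0$ and $\ferm=\emptyset$).

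First I would check that $\{\als'_k\}$ is an \tsupseq{}. Measurability is immediate: $\dnhd{\ferm}{\frac{1}{k}}$ is open in $\Rn$, so $\dnhd{\ferm}{\frac{1}{k}}\cap\dom$ is relatively open and hence lies in $\sal$, and $\als'_k=\als_k\cap\bigl(\dnhd{\ferm}{\frac{1}{k}}\cap\dom\bigr)$ is then a member of $\sal$. The sequence decreases because both $\{\als_k\}$ and $\{\dnhd{\ferm}{\frac{1}{k}}\}$ decrease in $k$, and $0\leq\sme(\als'_k)\leq\sme(\als_k)\to0$ since $\als'_k\subseteq\als_k$ and $\sme\geq0$ is monotone. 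The one condition needing an argument is that each $\als'_k$ is an \taur{}. Writing the complement inside $\dom$ as $(\als'_k)^c=\als_k^c\cup\bigl(\dom\cap(\dnhd{\ferm}{\frac{1}{k}})^c\bigr)$ and using finite subadditivity of $|\me_p|$,
\[
  |\me_p|\bigl((\als'_k)^c\bigr)\leq|\me_p|(\als_k^c)+|\me_p|\bigl(\dom\cap(\dnhd{\ferm}{\frac{1}{k}})^c\bigr)=0,
\]
since the first summand vanishes as $\als_k$ is an \taur{} and the second vanishes by Proposition \ref{prop:cor_nempty} with $\delta=\frac{1}{k}$.

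For the inclusion $\bigcap_k\overline{\als'_k}^{\Rn}\subseteq\ferm$ I would use only $\als'_k\subseteq\dnhd{\ferm}{\frac{1}{k}}$, whence $\overline{\als'_k}^{\Rn}\subseteq\overline{\dnhd{\ferm}{\frac{1}{k}}}^{\Rn}$. It then suffices to observe that $\bigcap_k\overline{\dnhd{\ferm}{\frac{1}{k}}}^{\Rn}=\ferm$: the inclusion $\supseteq$ is clear, while if $\eR\notin\ferm$ then, $\ferm$ being closed, some ball $\ball{\eR}{r}$ misses $\ferm$, so $\ball{\eR}{r/2}$ is disjoint from $\dnhd{\ferm}{r/2}\supseteq\dnhd{\ferm}{\frac{1}{k}}$ once $\frac{1}{k}\leq r/2$, and therefore $\eR\notin\overline{\dnhd{\ferm}{\frac{1}{k}}}^{\Rn}$ for such $k$.

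The delicate inclusion is $\ferm\subseteq\bigcap_k\overline{\als'_k}^{\Rn}$, where the \taur{} property has to be turned into a topological accumulation statement. Fix $\eR\in\ferm$, $k\in\N$, and an arbitrary open neighbourhood $\nhd\subset\Rn$ of $\eR$. Splitting $\nhd\cap\dom\in\sal$ along $\als'_k$ and using finite additivity together with $|\me_p|((\als'_k)^c)=0$ gives
\[
  |\me_p|(\nhd\cap\als'_k)=|\me_p|(\nhd\cap\dom)>0,
\]
where positivity is exactly the defining property of the \tcor{}. Hence $\nhd\cap\als'_k\neq\emptyset$, so $\eR\in\overline{\als'_k}^{\Rn}$, and since $k$ is arbitrary $\eR\in\bigcap_k\overline{\als'_k}^{\Rn}$. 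Combining the two inclusions proves the identity. I expect this last inclusion to be the main obstacle: it forces one to use simultaneously that $|\me_p|$ charges every neighbourhood of a core point and that $|\me_p|$ is concentrated on the \taur{} $\als'_k$, so that the aura must meet every neighbourhood of $\eR$.
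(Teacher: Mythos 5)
Your proposal is correct, and its overall skeleton (verify the three \tsupseq{} properties, then prove two inclusions) matches the paper's. The verification that each $\als'_k$ is an \taur{} via subadditivity and Proposition \ref{prop:cor_nempty}, and the inclusion $\bigcap_k\overline{\als'_k}^{\Rn}\subseteq\ferm$ via $\als'_k\subseteq\dnhd{\ferm}{\frac{1}{k}}$, are exactly the paper's steps (you are in fact more careful than the paper's ``obviously'' about the closure). Where you genuinely diverge is the hard inclusion $\ferm\subseteq\bigcap_k\overline{\als'_k}^{\Rn}$. The paper proceeds in two stages: it first shows $\ferm\subseteq\overline{\als_k}^{\Rn}$ by a contrapositive measure argument (a point outside $\overline{\als_k}^{\Rn}$ has a neighbourhood of $|\me_p|$-measure zero, hence is not in the \tcor{}), and then upgrades to $\als'_k$ by a purely metric argument: a sequence in $\als_k$ converging to $\eR\in\ferm$ eventually lies within distance $\frac{1}{k}$ of $\ferm$ and hence in $\als_k\cap\dnhd{\ferm}{\frac{1}{k}}$. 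You instead argue directly on $\als'_k$: since $|\me_p|((\als'_k)^c)=0$, finite additivity forces $|\me_p|(\nhd\cap\als'_k)=|\me_p|(\nhd\cap\dom)>0$ for every neighbourhood $\nhd$ of a core point, so the \taur{} meets every such neighbourhood. Your route is slightly more economical and isolates a reusable fact -- the \tcor{} is contained in the closure of \emph{every} \taur{} -- while the paper's two-stage version separates the measure-theoretic content (which only needs the original $\als_k$) from the metric bookkeeping for the $\frac{1}{k}$-neighbourhoods. Both are complete proofs.
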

\begin{proof}
	Let $\ferm := \cor{\me_p}$.
	Note that $|\me_p|$ is \tpfa{} and let $\{\als_k\}_{k\in \N} \subset \sal$ be any \tsupseq{} of $\me_p$. Then for every $k \in \N$, $\eR \in \left(\overline{\als_k}^{\Rn}\right)^c$ and any open neighbourhood $\nhd \subset \left(\overline{\als_k}^{\Rn}\right)^c$ of $\eR$
			\begin{equation*}
				|\me_p|(\nhd \cap \dom) \leq |\me_p|\left( \left(\overline{\als_k}^{\Rn}\right)^c \cap \dom\right) \leq |\me_p|(\als_k^c \cap \dom) = 0 \,.
			\end{equation*}
			Hence
			\begin{equation*}
				\ferm \subset \overline{\als_k}^{\Rn} \text{ for every } k \in \N\,.
			\end{equation*}
			Thus,
			\begin{equation*}
				\ferm \subset \bigcap \limits_{k \in \N} \overline{\als_k}^{\Rn} \,.
			\end{equation*}
			For $k\in \N$ set
		\begin{equation*}
			\als'_k := \als_k \cap \dnhd{\ferm}{\frac{1}{k}} \,.
		\end{equation*}
	Then for every $k \in \N$
	\begin{equation*}
		|\me_p|\left(\als_k'^c\right) \leq |\me_p|(\als_k^c) +
		|\me_p|\left(\left(\dnhd{\ferm}{\frac{1}{k}}\right)^c\cap \dom \right)= 0 \,,
	\end{equation*}
	by Proposition \ref{prop:cor_nempty}.

	Furthermore
	\begin{equation*}
		0 \leq \sme(\als'_k) \leq \sme(\als_k) \xrightarrow{k \to \infty} 0 \,.
	\end{equation*}
	Obviously
	\begin{equation*}
		\bigcap \limits_{k \in \N} \overline{\als_k\cap
		\dnhd{\ferm}{\frac{1}{k}}}^{\Rn} \subset \bigcap \limits_{k \in
	\N} \dnhd{\ferm}{\frac{1}{k}} = \ferm \,.
	\end{equation*}
	It remains to show that
	\begin{equation*}
		\ferm \subset \bigcap\limits_{k \in \N} \overline{\als_k \cap
		\dnhd{\ferm}{\frac{1}{k}}}^{\Rn} \,.
	\end{equation*}
	Let $\eR \in \ferm$. Then $\eR \in \overline{\als_k}^{\Rn}$ for every $k$. Hence, for every $k$ there is a sequence $\{\eR^k_l\}_{l \in \N} \subset \als_k$ such that 
	\begin{equation*}
		\eR^k_l \xrightarrow{l \to \infty} \eR \,.
	\end{equation*}
	In particular, there is an $l_0^k \in \N$ such that
	\begin{equation*}
		\|\eR^k_l - \eR \| < \frac{1}{k} \text{ for } l \geq l^k_0 \,.
	\end{equation*}
	Hence, for every $k \in \N$, 
	\begin{equation*}
		\eR \in \overline{\als_k \cap \dnhd{\ferm}{\frac{1}{k}}}^{\Rn} \,.
	\end{equation*}
	Since $\eR \in \ferm$ was arbitrary, this finally implies
	\begin{equation*}
		\ferm \subset \bigcap \limits_{k \in \N} \overline{\als_k \cap
		\dnhd{\ferm}{\frac{1}{k}}}^{\Rn} \,.
	\end{equation*}
\end{proof}
\begin{figure}[H]
	\centering
	\begin{tikzpicture}[rotate=30,scale=1.4]
		\draw (0,0) -> (0,1);
		\draw (0.15,0.5) node {\ferm};
		\draw[gray] (0,0.5) circle [x radius=1.5cm, y radius=1.9cm];
		\draw[gray] (0.0,2.2) node {$A_1$};
		\draw[gray] (0,0.5) circle [x radius=1.0cm, y radius=1.45cm];
		\draw[gray] (0.0,1.8) node {$A_2$};
		\draw[gray] (0,0.5) circle [x radius=0.7cm, y radius=1.1cm];
		\draw[gray] (0.0,1.45) node {...};
		\draw[gray] (0,0.5) circle [x radius=0.5cm, y radius=0.8cm];
		\draw[gray] (0,0.5) circle [x radius=0.3cm, y radius=0.6cm];
	\end{tikzpicture}
	\caption{An \tsupseq{} $\{A_k\}_{k \in \N}$ of a \tdd{1} \tme{} with \tcor{}
	$\ferm= \bigcap \limits_{k \in \N} A_k$}\label{fig:supseq}
\end{figure}

The following lemma identifies a big class of \tpfa{} \tme{}s. In particular,
if the \tcor{} of a \tme{} is a Lebesgue \tnulset{}, the \tme{} is necessarily
\tpfa{}.
\begin{proposition}\label{prop:corenul_pfa}
	Let $\dom \in \bor{\Rn}$ and $\me \in \bawl{\dom}$.

	If $\cor{\me} \cap \dom$ is a $\lem$-\tnulset{} then $\me$ is \tpfa{}.
\end{proposition}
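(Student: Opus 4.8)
The plan is to show that the measure $\me$ has no nonzero $\sigma$-additive part, i.e.\ that in the Riesz decomposition $\me = \me_c + \me_p$ (Proposition \ref{prop:yosh_hew_dec}) one has $\me_c = 0$, so that $\me = \me_p \in \caa^\perp$ is \tpfa{}. Since the decomposition is with respect to the normal sublattice $\caa$ of $\baa$, establishing $\me_c = 0$ is equivalent to establishing $\tova{\me_c}(\dom) = 0$.

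First I would localise the $\sigma$-additive part to the \tcor{}. From $\me_c \perp \me_p$ and the additivity of the total variation on orthogonal elements, $\tova{\me_c} \leq \tova{\me}$, so for every open set $\nhd \subset \Rn$ disjoint from $\cor{\me}$ one has $\tova{\me_c}(\nhd \cap \dom) \leq \tova{\me}(\nhd \cap \dom) = 0$ by the very definition of the \tcor{}. Using Proposition \ref{prop:cor_nempty}, which gives $\tova{\me}(\dom \cap ((\cor{\me})_\delta)^c) = 0$ for every $\delta > 0$, I can conclude $\tova{\me_c}(\dom \setminus \cor{\me}) = 0$ by letting $\delta \downarrow 0$. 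Hence $\tova{\me_c}$ is concentrated on $\cor{\me} \cap \dom$, which by hypothesis is a $\lem$-\tnulset{}.

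The next step is to exploit weak absolute continuity. By assumption $\me \in \bawl{\dom}$, i.e.\ $\me \wac \lem$, and since $\tova{\me_c} \leq \tova{\me}$ the $\sigma$-additive part inherits $\me_c \wac \lem$. Because $\cor{\me}\cap\dom$ is a Lebesgue \tnulset{}, $\lem(\cor{\me}\cap\dom) = 0$, and Proposition \ref{thm:wac_totvar} then yields $\tova{\me_c}(\cor{\me}\cap\dom) = 0$. Combining this with the previous paragraph gives
\begin{equation*}
	\tova{\me_c}(\dom) = \tova{\me_c}(\cor{\me}\cap\dom) + \tova{\me_c}(\dom \setminus \cor{\me}) = 0 \,,
\end{equation*}
so $\me_c = 0$ and $\me = \me_p$ is \tpfa{}.

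The main obstacle I anticipate is making the passage $\tova{\me_c}(\dom \setminus \cor{\me}) = 0$ fully rigorous: $\me_c$ is $\sigma$-additive, so I can genuinely take the limit $\delta \downarrow 0$ along a decreasing sequence of the sets $(\cor{\me})_\delta^c \cap \dom$ and invoke continuity from below for the increasing sets (or from above for their complements), which is exactly where $\sigma$-additivity of $\me_c$ is used and where the finitely additive $\me$ would fail. I also need the \tal{} to contain the relevant neighbourhoods, which is ensured since $\dom \in \bor{\Rn}$ and $\me$ lives on the Borel sets; and I should confirm that Proposition \ref{prop:cor_nempty} applies, which requires $\dom$ bounded — if $\dom$ is unbounded the argument must instead exhaust $\dom$ by bounded pieces, so I would note this case separately or restrict attention to the geometry guaranteeing a nonempty, properly localised \tcor{}.
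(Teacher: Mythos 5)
Your proof is correct and takes essentially the same route as the paper's: both localise the $\sigma$-additive content to arbitrarily small neighbourhoods of $\cor{\me}$, use $\sigma$-additivity to pass to the limit $\delta \downarrow 0$, and use weak absolute continuity with respect to $\lem$ to annihilate the core itself. The only cosmetic difference is that you argue with the canonical part $\me_c$ from Proposition \ref{prop:yosh_hew_dec}, whereas the paper shows directly that every $\sigma$-additive $\sme$ with $0 \leq \sme \leq \tova{\me}$ vanishes and then invokes Proposition \ref{prop:char_orth}; your caveat about the bounded-domain hypothesis behind Proposition \ref{prop:cor_nempty} applies equally to the paper's own proof.
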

\begin{proof}
	Let $\bals := \cor{\me}$. Then by the definition of the \tcor{}, for every $\delta > 0$
	\begin{equation*}
		|\me|\left(\dnhd{\bals}{\delta}^c \cap \dom \right) = 0 \,.
	\end{equation*}
	Now let $\bals_k := \dnhd{\bals}{\frac{1}{k}} \cap \dom$ for $k \in \N$ and $\sme \in \bawl{\dom}, \sme \geq 0$ be a \tsme{} such that
	\begin{equation*}
		0 \leq \sme \leq |\me| \,.
	\end{equation*}
	Then for every $k \in \N$
	\begin{equation*}
		0 \leq \sme((\bals_k)^c) \leq |\me|((\bals_k)^c) = 0 \,.
	\end{equation*}
	On the other hand, since $\cor{\me}\cap \dom$ is a $\lem$-\tnulset{}, 
	\begin{equation*}
		\sme(\dom\cap \bals) = 0 \,.
	\end{equation*}
	Hence
	\begin{equation*}
		\sme(\dom) = \sme(\dom \cap \bals) + \sme\left ( \bigcup\limits_{k \in N} \bals_k^c \right) = \lim \limits_{k \to \infty} \sme\left(\bals_k^c\right) = 0 \,.
	\end{equation*}
	This implies $\sme = 0$. 

	Since $\sme$ was arbitrary, $\me$ is \tpfa{} by Proposition
	\ref{prop:char_orth} and Proposition \ref{prop:yosh_hew_dec}.
\end{proof}
\begin{remark}
	Note that $\cor{\me}\subset \cl{\dom}$. If $\dom \subset \Rn$ is open
	such that $\lem(\bd{\dom}) >0$, then there is $\me \in \bawl{\dom}$
	such that $\cor{\me} = \bd{\dom}$. Hence $\cor{\me}$ is not a
	\tnulset{}, but $\cor{\me} \cap \dom = \emptyset$. Thus, $\me$ is
	necessarily \tpfa{}.
\end{remark}

The following proposition is taken from \cite[p. 70]{rao_theory_1983}.
It shows that there are many degrees of freedom when choosing an extension of
a \tme{} to a larger class of sets. Since all \tpfa{} \tme{}s used below are
constructed using an extension argument, they are in general not unique.
\begin{proposition}
	Let $\dom \subset \Rn$ and $\al \subset \pos{\dom}$ be an \tal{} on $\dom$. Let $\me \in \baa, \me\geq 0$. Let $\als \in \pos{\dom}\setminus \al$ and $\al'\subset\pos{\dom}$ the smallest \tal{} such that $\al,\{\als\} \subset \al'$.
	Then for any $\const \in [0,\infty)$ such that
	\begin{equation*}
		\sup \{\me(\als') \setpipe \als' \in \al, \als' \subset \als\}
		\leq \const \leq \inf\{\me(\als') \setpipe \als' \in \al, \als \subset \als'\}
	\end{equation*}
	there exists an extension $\me' \in \baA{\al'}, \me'\geq 0$ of $\me$ to all of $\al'$ such that
	\begin{equation*}
		\me'(\als) = \const \,.
	\end{equation*}
\end{proposition}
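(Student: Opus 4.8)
The plan is to describe the generated algebra $\al'$ concretely and then to build $\me'$ by prescribing, for each $A \in \al$, how the mass $\me(A)$ is split between the part of $A$ lying inside $\als$ and the part lying outside. First I would check that every element of $\al'$ has one of the two forms
\begin{equation*}
	(A \setminus \als) \cup (\als \cap A') \qquad \text{or} \qquad (A \setminus \als) \cup (\als \setminus A''), \quad A, A', A'' \in \al,
\end{equation*}
by verifying that the collection of all such sets is stable under $\cup$, $\cap$ and $\setminus$ and contains both $\al$ and $\als$; since any algebra containing $\al$ and $\als$ must contain these sets, this collection is exactly $\al'$. The ``off-$\als$'' part $A \setminus \als$ and the ``on-$\als$'' part of each such set are disjoint, so once I fix the value of $\me'$ on sets of the form $A \setminus \als$, $\als \cap A'$ and $\als \setminus A''$, additivity forces the value on all of $\al'$.

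The key is thus to produce a positive, finitely additive set function $\phi$ on $\al$ --- morally $\phi(A) = \me'(A \cap \als)$ --- with $0 \le \phi \le \me$, with $\phi(A) = \me(A)$ whenever $A \subset \als$ and $\phi(A) = 0$ whenever $A \cap \als = \emptyset$, and whose total over $\als$ equals $\const$; I would then set $\me'(A \setminus \als) := \me(A) - \phi(A)$, $\me'(\als \cap A') := \phi(A')$ and $\me'(\als \setminus A'') := \const - \phi(A'')$. To construct $\phi$, observe that for a fixed set $S$ the inner content $A \mapsto \me_*(A \cap S) := \sup\{\me(B) : B \in \al,\ B \subset A \cap S\}$ is genuinely finitely additive: superadditivity is clear, and the reverse inequality holds because any $B \subset (A_1 \cup A_2) \cap S$ splits as $(B \cap A_1) \cup (B \cap A_2)$ along disjoint $A_1, A_2 \in \al$. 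Hence both $\phi_0(A) := \me_*(A \cap \als)$ and $\phi_1(A) := \me(A) - \me_*(A \setminus \als)$ are finitely additive, lie between $0$ and $\me$, and satisfy the two boundary conditions; moreover, on any $W \in \al$ with $\als \subset W$ their values are exactly $\me_*(\als) = \sup\{\me(\als') : \als' \subset \als\}$ and $\me^*(\als) := \inf\{\me(\als') : \als \subset \als'\}$, the two bounds appearing in the hypothesis. Since $\const$ lies between them, I would take $\phi := (1-t)\phi_0 + t\phi_1$ with $\const = (1-t)\me_*(\als) + t\,\me^*(\als)$ and $t \in [0,1]$, which is again finitely additive, squeezed between $0$ and $\me$, respects both boundary conditions, and has total $\const$ on every cover of $\als$.

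Finally I would verify that $\me'$ is well defined and finitely additive, that $\me' \ge 0$, and that $\me'(\als) = \const$. Within a single family, two representations of the same set differ only by $\al$-sets contained in or disjoint from $\als$, on which the boundary conditions force the defining formulas to agree; the remaining additivity and monotonicity checks are routine. The step I expect to be the main obstacle is the interplay between the two representation families. A set admits both forms precisely when $\als$ is covered by a member of $\al$ (the case $\me^*(\als) < \infty$), and consistency there forces $\phi(A') + \phi(A'') = \const$ whenever $\als \cap A'$ and $\als \setminus A''$ coincide; this is guaranteed only because $\als \cap A'$ and $\als \cap A''$ then partition $\als$ with $\als \cap A' \cap A'' = \emptyset$, so $\phi(A') + \phi(A'') = \phi(A' \cup A'')$ equals the total $\const$ by the choice of $t$. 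When no member of $\al$ contains $\als$, the outer bound is $+\infty$, the two families never overlap, and $\phi_0$ alone suffices. Establishing this overlap consistency, together with the finite additivity of the inner content, is where the real work lies.
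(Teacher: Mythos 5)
The paper offers no proof of this proposition; it is imported verbatim from Rao and Rao \cite[p. 70]{rao_theory_1983}, so there is no in-text argument to compare against. Your proposal is a correct, self-contained reconstruction of the standard one-set extension argument for charges. The three ingredients --- the description of the generated algebra by the two families $(A\setminus\als)\cup(\als\cap A')$ and $(A\setminus\als)\cup(\als\setminus A'')$, the finite additivity in $A$ of the inner content $\phi_0(A)=\sup\{\me(B)\setpipe B\in\al,\ B\subset A\cap\als\}$ and of its dual $\phi_1(A)=\me(A)-\sup\{\me(B)\setpipe B\in\al,\ B\subset A\setminus\als\}$, and the convex interpolation $\phi=(1-t)\phi_0+t\phi_1$ to realise $\const$ --- are exactly what is needed, and your handling of the overlap between the two families is the step where a careless write-up would fail: if $\als\cap A'=\als\setminus A''$ then indeed $\als\subset A'\cup A''$, $\als\cap A'\cap A''=\emptyset$, and $\phi(A')+\phi(A'')=\phi(A'\cup A'')=\const$ because $\phi$ assigns the value $\const$ to every member of $\al$ covering $\als$. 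Two minor points to tidy in a full write-up, neither of which affects correctness: the claim that a set admits both forms \emph{precisely} when $\als$ is covered by a member of $\al$ is not literally true (any $A\setminus\als$ lies in both families, via $A'=\emptyset$ on one side and any $A''\supset\als\cap A''=\als$ being unnecessary on the other), but the identity $\phi(A'')=\const$ for $\al$-covers of $\als$ disposes of these degenerate overlaps as well; and when no member of $\al$ contains $\als$ the interpolation parameter $t$ is unavailable since $\me^*(\als)=+\infty$, so one takes $\phi=\phi_0$ and checks directly that $\phi_0(A'')\le\sup\{\me(B)\setpipe B\in\al,\ B\subset\als\}\le\const$, which is what your positivity requirement on $\const-\phi(A'')$ amounts to.
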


\section{Integration Theory and $\dual{\libld{\dom}}$}
Now, integration with respect to \tme{} which are not necessarily
$\sigma$-additive is outlined.
Measurability of functions is not defined through the regularity of preimages but by
approximability by \tsimf{} functions in measure. In this definition, the
\tme{} is needed on possibly non-measurable sets. Hence, an \toume{}
has to be used. This \toume{} is defined as in the case of \tsme{}s (cf.
\cite[p. 86]{rao_theory_1983}, \cite[p. 42]{halmos_measure_1974}).
\begin{definition}
	Let $\dom \subset \Rn$ and $\al \subset \pos{\dom}$ be an \tal{}. For
	$\me \in \baa$, $\me \geq 0$ the \textbf{\toume} of $\me$ is defined
	for $\bals \in \pos{\dom}$ by
	\begin{equation*}
		\oume{\me}(\bals) := \inf\limits_{\substack{\als\in \al,\\\bals
		\subset \als}}\me(\als) \,.
		\nomenclature[m]{$\oume{\me}$}{\toume{} for $\me$}
	\end{equation*}
\end{definition}

Now, convergence in measure can be defined. 
The definition is taken from \cite[p. 92]{rao_theory_1983} (cf.
\cite[p. 91]{halmos_measure_1974}).
\begin{definition}
	Let $\dom \subset \Rn$ and $\al \subset \pos{\dom}$ be an \tal{} and
	$\me: \al \to \R$ be a \tme{}. A sequence $\{\fun_k\}_{k \in \N}$ of
	functions $\fun_k : \dom \to \R$ is said to
	converge \textbf{\tconvim{}}\index{convergence \tconvim{}} to a function $\fun : \dom \to \R$ if for every $\eps > 0$
	\begin{equation*}
		\lim \limits_{k \to \infty} \oume{\tova{\me}} \{\eR \in \dom \setpipe |\fun_k(\eR) - \fun(\eR) | > \eps \} = 0 \,.
	\end{equation*}
	In this case, write
	\begin{equation*}
		\fun_k \convim{\me} \fun \,.
		\nomenclature[i]{$\fun_k \convim{\me} \fun$}{$\fun_k$ converge
		\tconvim{} to $\fun$}
	\end{equation*}
	\end{definition}
Note that the limit in measure is not unique, yet. Therefore, the following
notion of equality almost everywhere is needed.
The definition is taken from \cite[p. 88]{rao_theory_1983}.
\begin{definition}
	Let $\dom \subset \Rn$, $\al \subset \pos{\dom}$ and $\me: \al \to \R$ be a measure.

	Then $\fun: \dom \to \R$ is called \textbf{\tnulfun}\index{null
	function@\tnulfun{}}, if for every $\eps >0$
	\begin{equation*}
		\oume{\tova{\me}} \left(\left\{\eR \in \dom \setpipe |\fun(\eR)|>\eps\right\}\right) = 0 \,.
	\end{equation*}

	Two functions $\fun_1: \dom \to \R$, $\fun_2: \dom \to \R$ are called
	\textbf{equal almost everywhere (\tmae{})}\index{equal \tmae{}} with respect to $\me$, if $\fun_1 - \fun_2$ is a \tnulfun{}.

	In this case, write
	\begin{equation*}
		\fun_1 = \fun_2 \mae{\me}
		\nomenclature[i]{$\fun_1 = \fun_2 \mae{\me}$}{$\fun_1 = \fun_2$ \tmae{}}
	\end{equation*}
\end{definition}
\begin{remark}
	If $\fun: \dom \to \R$ is a \tnulfun{}, then it need not be true that
	\begin{equation}\label{eq:nullfunc}
		\oume{\tova{\me}} \left (\left\{\eR\in \dom \setpipe \fun (\eR) \neq 0 \right\}\right ) = 0 \,.
	\end{equation}
	Take e.g. the \tdme{} $\me$ introduced in Example \ref{ex:dzero} and
	$\fun(\eR) := |\eR|$. Then $\fun$ is a \tnulfun{} but
	\begin{equation*}
		\oume{\tova{\me}}(\{\eR\in \Rn | \fun(\eR)\neq 0\} =
		\me(\ball{0}{1}\setminus\{0\}) = 1 > 0 \,.
	\end{equation*}

	This entails that the notion of equality almost everywhere that was
	defined above does not imply the existence of a \tnulset{} such that
	$\fun_1 = \fun_2$ outside of that set. Take e.g. the \tdme{} introduced
	in Example \ref{ex:dzero}, $\fun_1(\eR) := |\eR|$ and
	$\fun_2(\eR):=2\fun_1(\eR)$.

	On the other hand, if $\me$ is a \tsme{} and $\al$ a \tsal{}, then Equation \eqref{eq:nullfunc} is equivalent to $\fun$ being a \tnulfun{} (cf. \cite[p. 89]{rao_theory_1983}).

\end{remark}
The limit in measure turns out to be unique in the sense of almost equality.
This is stated in the following proposition taken from \cite[p. 92]{rao_theory_1983}.
\begin{proposition}
	Let $\dom \subset \Rn$, $\al \subset \pos{\dom}$ be an \tal{} and $\me : \al \to \R$ be a measure. 
	Furthermore let $\{\fun_k\}_{k \in \N}$ be a sequence of functions
	$\fun_k : \dom \to \R$ and $\fun, \tilde{\fun} : \dom \to \R$ be functions such that
	\begin{equation*}
		\fun_k \convim{\me} \fun\,.
	\end{equation*}
	Then
	\begin{equation*}
		\fun_k \convim{\me} \tilde{\fun} \iff	\fun = \tilde{\fun} \mae{\me}
	\end{equation*}
\end{proposition}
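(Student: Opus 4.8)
The plan is to reduce the whole statement to three ingredients: the triangle inequality, the monotonicity of the outer measure $\oume{\tova{\me}}$, and its finite subadditivity. Monotonicity is immediate from the definition, since enlarging the target set can only enlarge the family of admissible covers $\als \in \al$ over which the infimum is taken. For finite subadditivity I would argue as follows: given $\bals_1,\bals_2 \in \pos{\dom}$ and $\eps > 0$, choose $\als_1,\als_2 \in \al$ with $\bals_i \subset \als_i$ and $\tova{\me}(\als_i) \leq \oume{\tova{\me}}(\bals_i) + \eps$. Since $\al$ is an algebra, $\als_1 \cup \als_2 \in \al$ covers $\bals_1 \cup \bals_2$, and because $\tova{\me}$ is a positive, finitely additive measure, $\tova{\me}(\als_1 \cup \als_2) \leq \tova{\me}(\als_1) + \tova{\me}(\als_2)$. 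Letting $\eps \downarrow 0$ yields $\oume{\tova{\me}}(\bals_1 \cup \bals_2) \leq \oume{\tova{\me}}(\bals_1) + \oume{\tova{\me}}(\bals_2)$.

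For the implication from right to left, assume $\fun = \tilde{\fun} \mae{\me}$ and fix $\eps > 0$. By the triangle inequality, whenever $|\fun_k(\eR) - \tilde{\fun}(\eR)| > \eps$ one has $|\fun_k(\eR) - \fun(\eR)| > \frac{\eps}{2}$ or $|\fun(\eR) - \tilde{\fun}(\eR)| > \frac{\eps}{2}$, so that
\begin{equation*}
	\{\eR \in \dom \setpipe |\fun_k(\eR) - \tilde{\fun}(\eR)| > \eps\} \subset \{\eR \in \dom \setpipe |\fun_k(\eR) - \fun(\eR)| > \tfrac{\eps}{2}\} \cup \{\eR \in \dom \setpipe |\fun(\eR) - \tilde{\fun}(\eR)| > \tfrac{\eps}{2}\} \,.
\end{equation*}
Applying monotonicity and subadditivity, the outer measure of the left-hand set is bounded by the sum of the outer measures of the two sets on the right. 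The second vanishes because $\fun - \tilde{\fun}$ is a \tnulfun{}, and the first tends to $0$ as $k \to \infty$ because $\fun_k \convim{\me} \fun$. Hence the left-hand quantity tends to $0$, i.e. $\fun_k \convim{\me} \tilde{\fun}$.

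For the converse, assume $\fun_k \convim{\me} \tilde{\fun}$ and fix $\eps > 0$. The same trick, now splitting $|\fun(\eR) - \tilde{\fun}(\eR)|$ through $\fun_k(\eR)$, gives
\begin{equation*}
	\{\eR \in \dom \setpipe |\fun(\eR) - \tilde{\fun}(\eR)| > \eps\} \subset \{\eR \in \dom \setpipe |\fun(\eR) - \fun_k(\eR)| > \tfrac{\eps}{2}\} \cup \{\eR \in \dom \setpipe |\fun_k(\eR) - \tilde{\fun}(\eR)| > \tfrac{\eps}{2}\} \,.
\end{equation*}
The crucial observation is that the left-hand set does not depend on $k$, whereas monotonicity and subadditivity bound its outer measure by a sum of two terms that both tend to $0$ as $k \to \infty$ (the first since $\fun_k \convim{\me} \fun$, the second by assumption). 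A $k$-independent nonnegative quantity dominated by a null sequence must equal $0$, so $\oume{\tova{\me}}\{\eR \in \dom \setpipe |\fun(\eR) - \tilde{\fun}(\eR)| > \eps\} = 0$; as $\eps > 0$ was arbitrary, $\fun - \tilde{\fun}$ is a \tnulfun{} and $\fun = \tilde{\fun} \mae{\me}$. The only step needing genuine care is the finite subadditivity of $\oume{\tova{\me}}$; everything else is bookkeeping. Notably, because the cover in the definition of $\oume{\,\cdot\,}$ is a single element of $\al$ rather than a countable union, only finite subadditivity is required, which is exactly what the finite additivity of $\tova{\me}$ supplies, so $\sigma$-additivity is never invoked.
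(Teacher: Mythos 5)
Your proof is correct and is exactly the standard argument (triangle inequality plus monotonicity and finite subadditivity of the induced outer measure, the latter following from finite additivity of $\tova{\me}$ on the algebra). The paper does not prove this proposition at all but simply cites it from Rao, and the cited proof proceeds along the same lines, so there is nothing to compare beyond noting that your write-up supplies the details the paper omits.
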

Now, the notion of measurability is introduced.
The definition is similar to the definition of $T_1$-measurability in \cite[p. 101]{rao_theory_1983}.
\begin{definition}
	Let $\dom \subset \Rn$ and $\al \subset \pos{\dom}$ be an \tal{} and
	$\me : \al \to \R$ be a \tme{}. A function $\fun : \dom \to \R$ is
	called \textbf{\tmefun{}}\index{measurable function@\tmefun{} function}
	if there exists a sequence $\{\simf_k\}_{k \in \N}$ of \tsimf{}
	functions $\simf_k : \dom \to \R$ such that
	\begin{equation*}
		\simf_k \convim{\me} \fun \,.
	\end{equation*}
\end{definition}
The integral for \tmefun{} functions can now be defined via $\lp^1$-Chauchy
sequences. This is of course well-defined (cf. \cite[p. 102]{rao_theory_1983}).
\begin{definition}
	Let $\dom \subset \Rn$, $\al \subset \pos{\dom}$ be an \tal{} and $\me:
	\al \to \R$ be a \tme{}. A function $\fun : \dom \to \R$ is said to be
	\textbf{\tIfun{}}\index{integrable function@\tIfun{} function} if there
	exists a sequence $\{\simf_k\}_{k\in \N}$ of \tIfun{} \tsimf{}
	functions $\simf_k : \dom \to \R$ such that
	\begin{enumerate}
		\item $\simf_k \convim{\me} \fun$.
		\item $\lim \limits_{k,l \to \infty} \I{\dom}{|\simf_k - \simf_l|}{|\me|} = 0$.
	\end{enumerate}
	In this case, denote
	\begin{equation*}
		\I{\dom}{\fun}{\me} := \lim\limits_{k \to \infty} \I{\dom}{\simf_k}{\me} \,.
		\nomenclature[i]{$\I{\dom}{\fun}{\me}$}{\tI{} of $\fun$ w.r.t. \tme{} $\me$}
	\end{equation*}
		The sequence $\{\simf_k\}_{k\in \N}$ is called
	\textbf{\tdetseq{}}\index{determining seq@\tdetseq{} of an \tIfun{}
	function} for the \tI{} of $\fun$. 
\end{definition}
\begin{remark}
	In particular, \tIfun{} functions are \tmefun{}.
	This notion of \tI{} is also called Daniell-Integral in the literature
	(cf. \cite{rao_theory_1983}).
\end{remark}
The $\lp^p$-spaces are defined in the usual way (cf. \cite[p. 121]{rao_theory_1983}).
\begin{definition}
	Let $\dom \subset \Rn$, $\al \subset \pos{\dom}$ be an \tal{}, $\me: \al \to \R$ be
	a measure and $p\in [1,\infty)$. Then the set of all \tmefun{} functions $\fun : \dom \to \R$ such that $|f|^p$ is $|\me|$-\tIfun{} is denoted by
	\begin{equation*}
		\Lpam{p}\,.
		\nomenclature[f]{$\Lpam{p}$}{$p$-\tIfun{} functions w.r.t to \tal{} $\al$ and \tme{} $\me$}
	\end{equation*}
	If $\al = \bor{\dom}$, write
	\begin{equation*}
		\Lpbm{p}\,.
		\nomenclature[f]{$\Lpbm{p}$}{$\Lpam{p}$ with $\al = \bor{\dom}$}
	\end{equation*}
	For $\fun_1, \fun_2 \in \Lpam{p}$
	\begin{equation*}
		\fun_1 = \fun_2 \mae{\me} 
	\end{equation*}
	defines an equivalence relation.
	The set of all equivalence classes of this relation is denoted by
	\begin{equation*}
		\lpam{p}\,.
		\nomenclature[f]{$\lpam{p}$}{Equivalence classes in $\Lpam{p}$}
	\end{equation*}
	If $\al = \bor{\dom}$, write
	\begin{equation*}
		\lpbm{p}\,.
		\nomenclature[f]{$\lpbm{p}$}{$\lpam{p}$ with $\al = \bor{\dom}$}
	\end{equation*}
\end{definition}
\begin{definition}
	Let $\dom \subset \Rn$, $\al \subset \pos{\dom}$ be an \tal{} and $\me: \al \to \R$ a \tme{}. Then for every $p \in [1,\infty)$ and $\fun \in \Lpam{p}$ write
	\begin{equation*}
		\norm{p}{\fun} := \left(\I{\dom}{|\fun|^p}{|\mu|}\right)^{\frac{1}{p}} \,.
		\nomenclature[n]{$\norm{p}{\fun}$}{$L^p$-norm of $\fun$}
	\end{equation*}
	Furthermore, for \tmefun{} $\fun : \dom \to \R$ define
	\begin{equation*}
		\essup{}{\fun} := \inf \left\{ \K \in \R \setpipe |\me|^*\left(\{\eR\in \dom | \fun(\eR) > \K\}\right) = 0\right\}
	\end{equation*}
	and
	\begin{equation*}
		\normi{\fun} := \essup{}{|\fun|} \,.
	\end{equation*}
	The set of all \tmefun{} functions $\fun : \dom \to \R$ such that
	\begin{equation*}
		\normi{\fun} < \infty
	\end{equation*}
	is denoted by
	\begin{equation*}
		\Liam \,.
	\end{equation*}
	As in the case $p\in [1,\infty)$, 
	\begin{equation*}
		\liam
	\end{equation*}
	denotes the set of all equivalence classes in $\Liam$ with respect to
	equality almost everywhere.

	In the case $\al = \bor{\dom}$, only write
	\begin{equation*}
		\Libm \text{ and } \libm \text{ respectively.}
	\end{equation*}
\end{definition}
The integral defined in this way shares many properties of the
Lebesgue-integral. The Hölder and Minkwoski inequality hold true. Furthermore,
dominated convergence is available when using convergence in \tme{} instead of
pointwise convergence (cf. \cite[p. 105ff]{rao_theory_1983}). 

Before proceeding to the characterisation of the dual of $\lp^\infty$, a new
integral symbol is introduced, which gives formulas for traces
and integrals over \tpfa{} \tme{}s a more pleasing
shape.
\begin{definition}
	Let $\dom \subset \Rn$ be bounded and $\ferm \subset \cl{\dom}$ be
	closed. Then for every $\me \in \bawl{\dom}$ such that
	\begin{equation*}
		\cor{\me} \subset \ferm,
	\end{equation*}
	every $\fun \in \lpbm{1}$ and $\delta >0$ write
	\begin{equation*}
		\sI{\ferm}{\fun}{\me} :=
		\I{\dnhd{\ferm}{\delta}\cap\dom}{\fun}{\me} \,.
		\nomenclature[i]{$\sI{\ferm}{\fun}{\me}$}{$\I{\dnhd{\ferm}{\delta}}{\fun}{\me}$,
		where $\ferm = \cor{\me}$}
	\end{equation*}
\end{definition}
\begin{remark}
	This notion of integral is well-defined since the definition of
	$\cor{\me}$ yields
	\begin{equation*}
		\tova{\me}((\dnhd{\ferm}{\delta})^c) = 0 
	\end{equation*}
	for any $\delta >0$.
\end{remark}
The following proposition is a specialised version of the proposition from
\cite[p. 139]{rao_theory_1983} (cf. \cite[p. 53]{yosida_finitely_1951}).
\begin{proposition}\label{prop:dual_liss_pre}
	Let $\dom \subset \Rn$, $\sal \subset \pos{\dom}$ be a \tsal{} and $\sme : \sal \to \R$ be a \tsme{}.

	Then for every $\de \in \dual{\left( \liss \right)}$ there exists a unique $\me \in \basws$ such that
	\begin{equation*}
		\df{\de}{\fun} = \I{\dom}{\fun}{\me}
	\end{equation*}
	for every $\fun \in \liss$ and
	\begin{equation*}
		\norm{}{\de} = \norm{}{\me} = \tova{\me}(\dom) \,.
	\end{equation*}
	On the other hand, every $\me \in \basws$ defines $\de\in
	\dual{\liss}$.

	Hence, $\dual{\liss}$ and $\basws$ can be identified.
\end{proposition}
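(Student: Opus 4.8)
The claim is a Riesz-type representation identifying $\dual{\liss}$ with $\basws$, the space of bounded finitely additive \tme{}s that are \twac{} with respect to $\sme$. The plan is to construct the correspondence in both directions and verify it is an isometric bijection. Given $\me \in \basws$, the map $\fun \mapsto \I{\dom}{\fun}{\me}$ is linear on $\liss$ by linearity of the integral, and bounded with $\|\de\| \leq \tova{\me}(\dom)$ since $|\I{\dom}{\fun}{\me}| \leq \normi{\fun}\,\tova{\me}(\dom)$; this gives the easy inclusion. The substance lies in the reverse direction: producing, from an arbitrary $\de \in \dual{\liss}$, a \tme{} $\me$ representing it.

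\textbf{Construction of $\me$ from $\de$.}
First I would define $\me$ on $\sal$ by setting $\me(\als) := \df{\de}{\chi_{\als}}$, where $\chi_{\als}$ is the indicator function (which lies in $\liss$ since it is essentially bounded and measurable). Finite additivity of $\me$ follows immediately from linearity of $\de$ together with $\chi_{\als_1 \cup \als_2} = \chi_{\als_1} + \chi_{\als_2}$ for disjoint $\als_1,\als_2$. Boundedness follows because $\|\chi_{\als}\|_\infty \leq 1$, so $|\me(\als)| \leq \|\de\|$; passing to the supremum over finite partitions in the total-variation formula yields $\tova{\me}(\dom) \leq \|\de\|$, which will ultimately close the isometry. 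The key continuity point is weak absolute continuity: if $\sme(\als) = 0$, then $\chi_{\als} = 0$ in $\liss$ (it is a \tnulfun{} with respect to $\sme$), hence $\me(\als) = \df{\de}{\chi_{\als}} = \df{\de}{0} = 0$, so $\me \wac \sme$ and $\me \in \basws$.

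\textbf{Agreement of the representation and isometry.}
Next I would verify $\df{\de}{\fun} = \I{\dom}{\fun}{\me}$ for all $\fun \in \liss$. This holds by construction for indicators, hence by linearity for \tsimf{} functions. For general $\fun \in \liss$ I would approximate: pick \tsimf{} functions $\simf_k$ with $\simf_k \to \fun$ appropriately (partitioning the essential range of $\fun$ into fine level sets so that $\|\simf_k - \fun\|_\infty \to 0$), so that $\simf_k \convim{\me} \fun$ and $\simf_k \to \fun$ in $\liss$. Then $\df{\de}{\simf_k} \to \df{\de}{\fun}$ by continuity of $\de$, while $\I{\dom}{\simf_k}{\me} \to \I{\dom}{\fun}{\me}$ by the definition of the integral via \tdetseq{}s; equality on \tsimf{} functions passes to the limit. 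Combined with $\|\de\| \leq \tova{\me}(\dom)$ from the forward direction and $\tova{\me}(\dom) \leq \|\de\|$ above, this gives $\|\de\| = \tova{\me}(\dom)$. Uniqueness follows since any representing $\me'$ must satisfy $\me'(\als) = \df{\de}{\chi_{\als}} = \me(\als)$ on all of $\sal$.

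\textbf{Main obstacle.}
The delicate step is the uniform approximation of an arbitrary $\fun \in \liss$ by \tsimf{} functions in a way that simultaneously controls both $\df{\de}{\simf_k}$ and $\I{\dom}{\simf_k}{\me}$. Because the \toume{} $\oume{\tova{\me}}$ rather than $\sme$ governs the essential supremum, and a \tnulfun{} need not vanish off a genuine \tnulset{} (as the remark preceding this proposition stresses), one must take care that the level-set partition respects equality almost everywhere with respect to $\me$, not pointwise values. Since $\|\fun\|_\infty < \infty$, partitioning $[-\|\fun\|_\infty, \|\fun\|_\infty]$ into intervals of width $1/k$ and setting $\simf_k$ constant on the corresponding measurable level sets yields $\|\fun - \simf_k\|_\infty \leq 1/k$, which drives both limits; the remaining verification that these $\simf_k$ form a legitimate \tdetseq{} is routine given the integration theory already established.
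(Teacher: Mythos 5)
The paper does not actually prove this proposition: it is quoted from Rao--Rao \cite[p.~139]{rao_theory_1983} and Yosida--Hewitt, so there is no in-paper argument to compare against. Your proposal reconstructs precisely the standard proof from those sources: define $\me(\als) := \df{\de}{\ind{\als}}$, read off finite additivity from linearity of $\de$, get $\me \wac \sme$ from the fact that $\ind{\als}$ is the zero class in $\liss$ when $\tova{\sme}(\als)=0$, verify the representation on \tsimf{} functions and pass to general $\fun \in \liss$ by uniform approximation (which works because $\left|\I{\dom}{(\fun-\simf_k)}{\me}\right| \leq \normi{\fun - \simf_k}\,\tova{\me}(\dom)$). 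That is the right route and the one the cited references take.

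One step is written too quickly. From $\normi{\ind{\als}} \leq 1$ you get only $|\me(\als)| \leq \norm{}{\de}$ for each individual $\als$, i.e.\ a bound on $\sup_{\als}|\me(\als)|$; this controls $\tova{\me}(\dom)$ only up to a factor of $2$, so "passing to the supremum over finite partitions" does not by itself yield $\tova{\me}(\dom) \leq \norm{}{\de}$. To close the isometry you must test $\de$ against the signed combination $\simf := \sum_{k} \varepsilon_k \ind{\als_k}$ for a finite partition $\{\als_k\}$ of $\dom$ in $\sal$, with $\varepsilon_k := \operatorname{sgn}\me(\als_k)$; then $\normi{\simf}\leq 1$ and $\df{\de}{\simf} = \sum_k |\me(\als_k)|$, whence the supremum over partitions is at most $\norm{}{\de}$. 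This is a one-line repair, and with it your argument is complete; the converse inequality $\norm{}{\de} \leq \tova{\me}(\dom)$ you already have from the forward direction.
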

Using the decomposition Theorem \ref{thm:dec_pfa_wac} that was proved
earlier, one obtains a more refined characterisation of the dual of
$\liss$. In particular, every element of the dual space is the sum of a \tsme{}
with $\lem$-density and a \tpfa{} \tme{}. In contrast to the literature, this
makes the intuitive idea of the dual of $\lp^\infty$ being $\lp^1$ plus
something which is not \twac{} with respect to Lebesgue measure precise.
\begin{theorem}\label{thm:dual_liss}
	Let $\dom \subset \Rn$ and $\sal \subset \pos{\dom}$ be a \tsal{} and $\sme : \sal \to \R$ be a \tsme{}. Then for every $\dual{u} \in \dual{\liss}$ there exists a unique \tpfa{} $\me_p \in \basws$ and a unique $\dfun \in \lpss{1}$ such that
	\begin{equation*}
		\df{\dual{u}}{\fun} = \I{\dom}{\fun\dfun}{\lem} + \I{\dom}{\fun}{\me_p}
	\end{equation*}
	for every $\fun \in \liss$.
\end{theorem}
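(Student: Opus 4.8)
The plan is to reduce the statement to two decompositions of a single representing measure. First I would apply Proposition \ref{prop:dual_liss_pre}: since $\dual{u} \in \dual{\liss}$, there is a unique $\me \in \basws$ with
\begin{equation*}
	\df{\dual{u}}{\fun} = \I{\dom}{\fun}{\me} \qquad \text{for every } \fun \in \liss \,.
\end{equation*}
The whole theorem then becomes a statement about how this single measure $\me$ splits, together with an identification of its $\sigma$-additive part.

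Next I would feed $\me$ into the decomposition Theorem \ref{thm:dec_pfa_wac}, the reference measure being $\lem$. Since $\me \in \basws$ gives $\me \wac \lem$, this produces unique $\me_c \in \cas \cap \basws$ and $\me_p \in \cas^\perp \cap \basws$ with $\me = \me_c + \me_p$, where $\me_c$ is $\sigma$-additive and $\me_p$ is \tpfa{}. Because $\me_c \perp \me_p$, additivity of the total variation on orthogonal elements gives $|\me| = |\me_c| + |\me_p|$, hence $|\me_c| \leq |\me|$ and $|\me_p| \leq |\me|$. Monotonicity of the outer measure then shows that a sequence of \tsimf{} functions converging to $\fun$ in $\me$-measure also converges in $\me_c$- and in $\me_p$-measure, so every $\fun \in \liss$ stays integrable against each part and, by linearity of the integral,
\begin{equation*}
	\I{\dom}{\fun}{\me} = \I{\dom}{\fun}{\me_c} + \I{\dom}{\fun}{\me_p} \,.
\end{equation*}

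To identify the $\sigma$-additive part with an $L^1$-density, I would apply the Radon--Nikodym Theorem (Proposition \ref{thm:radon_nik}) to $\me_c$, which is $\sigma$-additive with $\me_c \wac \lem$; this yields a unique $\dfun \in \lpss{1}$ such that $\me_c(\als) = \I{\als}{\dfun}{\lem}$ for every $\als \in \sal$. The step that needs the most care is upgrading this set-wise identity to
\begin{equation*}
	\I{\dom}{\fun}{\me_c} = \I{\dom}{\fun \dfun}{\lem} \qquad \text{for all } \fun \in \liss \,.
\end{equation*}
It holds for indicator functions by the defining property of $\dfun$, extends to \tsimf{} functions by linearity, and then to arbitrary bounded measurable $\fun$ by approximating $\fun$ in measure and invoking the dominated convergence theorem available for convergence in measure. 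This is the only place where one crosses from a statement about measures of sets to one about integrals of general functions, so I expect it to be the main obstacle.

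Combining the last three displays yields the claimed representation with $\me_p$ \tpfa{} and $\dfun \in \lpss{1}$. For uniqueness I would chain the uniqueness clauses of the three ingredients: the representing measure $\me$ is unique by Proposition \ref{prop:dual_liss_pre}, the splitting $\me = \me_c + \me_p$ is unique by Theorem \ref{thm:dec_pfa_wac}, and the density $\dfun$ is unique in $\lpss{1}$ by Radon--Nikodym. Hence both $\me_p$ and $\dfun$ are determined by $\dual{u}$, which completes the plan.
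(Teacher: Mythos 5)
Your proposal follows the paper's proof exactly: represent $\dual{u}$ by a measure via Proposition \ref{prop:dual_liss_pre}, split it with Theorem \ref{thm:dec_pfa_wac}, identify the $\sigma$-additive part by Radon--Nikodym, and chain the three uniqueness clauses. The only difference is that you spell out the step the paper dismisses as ``the integral is obviously linear in $\me$'' (passing from the set-wise identity for $\me_c$ to the integral identity via simple functions and dominated convergence), which is a welcome addition rather than a deviation.
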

\begin{proof}
	Let $\dual{u} \in \dual{\liss}$. Then by Proposition \ref{prop:dual_liss_pre} there exists $\me \in \basws$ such that for all $\fun \in \liss$
	\begin{equation*}
		\df{\dual{u}}{\fun} = \I{\dom}{\fun}{\me} \,.
	\end{equation*}
	Now, by proposition \ref{thm:dec_pfa_wac}, there exist unique $\me_c, \me_p \in \basws$ such that
	\begin{equation*}
		\me = \me_c + \me_p
	\end{equation*}
	and $\me_c$ is a \tsme{} and $\me_p$ is \tpfa{}.
	By the Radon-Nikodym Theorem (Proposition \ref{thm:radon_nik}) there is $\dfun \in \lpss{1}$ such that
	\begin{equation*}
		\me_c(\als) = \I{\als}{\dfun}{\sme}
	\end{equation*}
	for every $\als \in \sal$. Since the integral is obviously linear in $\me$ 
	\begin{equation*}
		\I{\dom}{\fun}{\me} = \I{\dom}{\fun}{\me_c} + \I{\dom}{\fun}{\me_p} = \I{\dom}{\fun \dfun}{\sme} + \I{\dom}{\fun}{\me_p}
	\end{equation*}
	for every $\fun \in \liss$, whence the statement of the proposition follows.
\end{proof}
\begin{remark}
	Note that the $\lp$-space over a \tme{} $\me\geq 0$ is in general not
	complete. Nevertheless, the completion is known to be the set of all
	\tac{} \tme{}s whose $p$-norm is finite, i.e. all bounded \tme{}s
	$\metoo$ with $\metoo \ac \me$ and
	\begin{equation*}
		\lim\limits_{\partition \in \partitions} \sum
		\limits_{\substack{\parte\in \partition\\\me(\parte) \neq 0}} \left |
		\frac{\metoo(\parte)}{\me(\parte)}\right|^p \me(\parte) <
		\infty \,.
	\end{equation*}
	Here, the limit is taken over the directed set $\partitions$ of all partitions
	$\partition$ of $\dom$. See \cite[p. 185ff]{rao_theory_1983} for
	reference. Using the convention $\frac{0}{0} = 0$, this limit is the
	same as the \textit{refinement integral}
	\begin{equation*}
		\RI{\dom}{\left | \frac{\metoo}{\me}\right|^p \me}
	\end{equation*}
	as defined by Kolmogoroff in \cite{Kolmogoroff1930}.
\end{remark}
\section{Density Measures}
This section will present the new class of \tme{}s, called \tdme{}s. These
\tme{}s extend on Example \ref{ex:dzero}.
It turns out that the signed distance function plays an important role.
\begin{definition}
	Let $\dom \subsetneq \Rn$ be non-empty. The function 
	\begin{equation*}
		\distf{\dom} : \Rn \to (-\infty,\infty)
	\end{equation*}
	defined by
	\begin{equation*}
	\distf{\dom}(\eR) := \begin{cases}
		\inf\limits_{\eRR\in \dom} |\eR-\eRR| &\mbox{ if } \eR
		\notin\dom \\
		-\inf\limits_{\eRR \in \dom^c} |\eR-\eRR| &\mbox{ if } \eR \in
		\dom \,.
	\end{cases}
	\nomenclature[s]{$\distf{\dom}$}{signed distance function}
	\end{equation*}
	is called \textbf{signed distance function}.\index{signed distance function}

	For sets $\bals \subset \Rn$ write
	\begin{equation*}
		\distf{\dom}(\bals) := \inf\limits_{\eR\in\bals} \distf{\dom}(\eR) \,.
	\end{equation*}
	Furthermore, neighbourhoods of sets prove useful. Therefore, set
	\begin{equation*}
		\dnhd{\dom}{\delta} := \distf{\dom}^{-1}((-\infty,\delta))
		\nomenclature[s]{$\dnhd{\dom}{\delta}$}{$\{\eR \in \encl{\dom}
		\setpipe \distf{\dom}(\eR) < \delta \}$}
	\end{equation*}
	for $\delta \in \R$.
\end{definition}
\begin{remark}
	Note that $\distf{\dom}$ is \tLcont{}, since it is the sum of two
	\tLcont{} functions. If $\dom\subset \Rn$ is bounded, then by
	\cite[p. 2788]{kraft_measure-theoretic_2016}
	\begin{equation*}
		\ham^{n-1}(\bd{(\dnhd{\dom}{\delta})}) < \infty
	\end{equation*}
	for every $\delta \in \Ran{\distf{\bd{\dom}}}$, the range of
	$\distf{\bd{\dom}}$. Note that there exist $\dom\subset \Rn$ having finite
	perimeter with
	\begin{equation*}
		\lim \limits_{\delta\downarrow 0}
		\ham^{n-1}(\bd{(\dnhd{\dom}{\delta})}) = \infty \,.
	\end{equation*}
	See Kraft \cite[p. 2781]{kraft_measure-theoretic_2016} for reference.
\end{remark}
Now, \tdme{}s can be defined. The basic definition essentially demands the
measure to be a probability measure whose \tcor{} is a Lebesgue \tnulset{}. By
scaling, any bounded positive \tme{} whose support has no volume can be seen as
a \tdme{}.
\begin{definition}
	Let $\dom\in\bor{\Rn}$,	$\ferm \subset \cl{\dom}$ be closed and $\lem(\ferm\cap
	\dom) = 0$. A \tme{} $\me \in \bawl{\dom}$ is called a
	\textbf{\tdme{}}\index{density measure@\tdme{}}
	for $\ferm$, if $\me \geq 0$ and
	for all $\delta > 0$
	\begin{equation*}
		\me(\dnhd{\ferm}{\delta}\cap \dom) = \me(\dom) = 1\,.
	\end{equation*}
	The set of all density measures for $\ferm$ is denoted by
	\begin{equation*}
		\Dme{\ferm} \,.	
		\nomenclature[m]{$\Dme{\ferm}$}{\tdme{}s for $\ferm$}
	\end{equation*}
\end{definition}
\begin{remark}
	If $\lem(\dom \cap \dnhd{\ferm}{\delta}) = 0$ for some $\delta >0$ or
	$\ferm = \emptyset$, then
	\begin{equation*}
		\Dme{\ferm} = \emptyset \,.
	\end{equation*}
\end{remark}
The following proposition shows that \tdme{}s indeed have \tcor{} on $\ferm$
and that they are \tpfa{}.
\begin{proposition}\label{prop:cor_dme}
	Let $\dom \in \bor{\Rn}$ and $\ferm
	\subset \cl{\dom}$ be closed with $\lem(\ferm \cap \dom) =
	0$. Then for every $\me \in \Dme{\ferm}$
	\begin{equation*}
		\cor{\me} \subset \ferm 
	\end{equation*}
	and $\me$ is \tpfa{}.
\end{proposition}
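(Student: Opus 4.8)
The plan is to establish the two assertions in order: first the core inclusion $\cor{\me} \subset \ferm$, and then purity via the already-proven Proposition~\ref{prop:corenul_pfa}. For the inclusion, I would first translate the density-measure condition into a vanishing statement far from $\ferm$. Since $\me \geq 0$ is finitely additive and $\dom$ decomposes as the disjoint union of $\dnhd{\ferm}{\delta} \cap \dom$ and $\dom \setminus \dnhd{\ferm}{\delta}$, the defining equality $\me(\dnhd{\ferm}{\delta} \cap \dom) = \me(\dom) = 1$ forces
\[
\me\bigl(\dom \setminus \dnhd{\ferm}{\delta}\bigr) = 0 \qquad \text{for every } \delta > 0 .
\]
Because $\dnhd{\ferm}{\delta} = \distf{\ferm}^{-1}((-\infty,\delta))$, this says $\me$ carries no mass on the region where the signed distance to $\ferm$ is at least $\delta$.

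Next I would show that no point outside $\ferm$ can belong to the core. Fixing $\eR \notin \ferm$ and using that $\ferm$ is closed, $d := \distf{\ferm}(\eR) = \inf_{\eRR \in \ferm}|\eR - \eRR| > 0$. Since $\distf{\ferm}$ is $1$-Lipschitz, the open ball $\ball{\eR}{d/2}$ lies entirely in $\distf{\ferm}^{-1}((d/2,\infty)) = (\dnhd{\ferm}{d/2})^c$, so
\[
|\me|\bigl(\ball{\eR}{d/2} \cap \dom\bigr) = \me\bigl(\ball{\eR}{d/2} \cap \dom\bigr) \leq \me\bigl(\dom \setminus \dnhd{\ferm}{d/2}\bigr) = 0 ,
\]
where the first equality uses $\me \geq 0$. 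Thus $\eR$ possesses an open neighbourhood of zero $|\me|$-mass and hence $\eR \notin \cor{\me}$, giving $\cor{\me} \subset \ferm$.

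Finally, purity follows immediately. Since $\cor{\me}$ is closed and $\dom$ is Borel, the set $\cor{\me} \cap \dom$ is Borel; from $\cor{\me} \cap \dom \subset \ferm \cap \dom$ together with $\lem(\ferm \cap \dom) = 0$ I conclude $\lem(\cor{\me} \cap \dom) = 0$ by monotonicity of $\lem$. Proposition~\ref{prop:corenul_pfa} then yields that $\me$ is \tpfa{}. I expect no serious obstacle here: the one point requiring slight care is the Lipschitz estimate bounding $\distf{\ferm}$ from below on the small ball, and that is routine.
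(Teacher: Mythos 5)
Your proof is correct and follows essentially the same route as the paper: show that mass vanishes off every $\delta$-neighbourhood of $\ferm$, use the positive distance from a point $\eR \notin \ferm$ to $\ferm$ to find a small ball of zero mass (the paper takes $\delta = \tfrac{1}{2}\distf{\ferm}(\eR)$, you take $d/2$ via the Lipschitz bound — same estimate), and then conclude purity from $\lem(\cor{\me}\cap\dom) \leq \lem(\ferm\cap\dom) = 0$ and Proposition~\ref{prop:corenul_pfa}. Your write-up is in fact slightly more explicit than the paper's in justifying $\ball{\eR}{d/2} \subset (\dnhd{\ferm}{d/2})^c$.
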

\begin{proof}
	Let $\eR \in \Rn\setminus \ferm$. Let
	\begin{equation*}
		\delta := \frac{1}{2} \distf{\ferm}(\eR) \,.
	\end{equation*}
	Then for every $0 < \tilde{\delta} <\delta$ 
	\begin{equation*}
		\me(\ball{\eR}{\tilde{\delta}}) \leq \me(\dom\setminus
		\dnhd{\ferm}{\delta}) = 0 \,.
	\end{equation*}
	Hence
	\begin{equation*}
		\eR \notin \cor{\me} \,,
	\end{equation*}
	and thus
	\begin{equation*}
		\cor{\me} \subset \ferm \,.
	\end{equation*}
	Finally
	\begin{equation*}
		\lem(\cor{\me} \cap \dom) \leq \lem(\ferm \cap \dom) = 0 \,.
	\end{equation*}
	By Proposition \ref{prop:corenul_pfa}, $\me$ is \tpfa{}.
\end{proof}
Density \tme{}s can be characterised in a way that justifies their name. In
essence, they are densities of other \tme{}s on their \tcor{}.
\begin{proposition}\label{prop:char_dme}
	Let $\dom \in\bor{\Rn}$ and $\ferm \subset \cl{\dom}$ be closed with $\lem(\ferm \cap \dom)
	=0$. A \tme{} $\me \in \bawl{\dom}$ is a \tdme{} for $\ferm$ if
	and only if there exists a \tme{} $\metoo \in
	\bawl{\dom}$ with $\metoo \geq 0$ satisfying
	\begin{equation*}
		\metoo(\dnhd{\ferm}{\delta}\cap \dom) > 0 \text{ for all } \delta > 0 \,,
	\end{equation*}	
	such that for every $\fun \in \libld{\dom}$
	\begin{equation*}
		\I{\dom}{\fun}{\me} \leq \limsup \limits_{\delta
		\downarrow 0} \mI{\dnhd{\ferm}{\delta}\cap \dom}{\fun}{\metoo} \,.
	\end{equation*}
	Then for every $\fun \in \libld{\dom}$
	\begin{equation}\label{eq:dme_limit}
		\sI{\ferm}{\fun}{\me} = \lim \limits_{\delta\downarrow 0}
		\mI{\dnhd{\ferm}{\delta}\cap \dom}{\fun}{\metoo}
	\end{equation}
	if this limit exists.
\end{proposition}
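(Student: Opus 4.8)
The plan is to test the displayed inequality against a small repertoire of functions in $\libld{\dom}$ (indicators and constants), and to dispose of the forward implication by the apparently circular but perfectly valid choice $\metoo := \me$. For the forward direction, let $\me \in \Dme{\ferm}$ and set $\metoo := \me$. Then $\metoo \geq 0$ and, by the defining property of a \tdme{}, $\metoo(\dnhd{\ferm}{\delta}\cap\dom) = \me(\dnhd{\ferm}{\delta}\cap\dom) = 1 > 0$ for every $\delta > 0$. Proposition \ref{prop:cor_dme} gives $\cor{\me}\subset\ferm$, hence $\tova{\me}((\dnhd{\ferm}{\delta})^c) = 0$, so that, using also the normalisation $\me(\dnhd{\ferm}{\delta}\cap\dom)=1$, one gets $\mI{\dnhd{\ferm}{\delta}\cap\dom}{\fun}{\me} = \I{\dom}{\fun}{\me}$ for every $\delta > 0$ and every $\fun \in \libld{\dom}$. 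Thus the right-hand side is constant in $\delta$ and equals $\I{\dom}{\fun}{\me}$, so the inequality holds (with equality).

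For the converse, assume such a $\metoo$ exists; I would first recover $\me \geq 0$ and the normalisation $\me(\dom) = 1$. Testing the hypothesis with $\fun = -\chi_{\bals}$ for $\bals \in \bor{\dom}$ gives
\begin{equation*}
	-\me(\bals) \leq \limsup\limits_{\delta\downarrow 0} \left(-\frac{\metoo(\bals\cap\dnhd{\ferm}{\delta}\cap\dom)}{\metoo(\dnhd{\ferm}{\delta}\cap\dom)}\right) \leq 0 \,,
\end{equation*}
since $\metoo \geq 0$ forces each quotient into $[0,1]$; hence $\me \geq 0$. Testing with the constants $\fun \equiv 1$ and $\fun \equiv -1$ (both lie in $\libld{\dom}$) and using $\mI{\dnhd{\ferm}{\delta}\cap\dom}{1}{\metoo} = 1$ yields $\me(\dom) \leq 1$ and $-\me(\dom) \leq -1$, so $\me(\dom) = 1$.

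The crucial step is the concentration near $\ferm$: I claim $\me(\dom\setminus\dnhd{\ferm}{\delta_0}) = 0$ for every $\delta_0 > 0$. Testing the hypothesis with $\fun = \chi_{\dom\setminus\dnhd{\ferm}{\delta_0}}$ and using that $\dnhd{\ferm}{\delta} \subset \dnhd{\ferm}{\delta_0}$ for $\delta < \delta_0$, the sets $(\dom\setminus\dnhd{\ferm}{\delta_0})\cap\dnhd{\ferm}{\delta}\cap\dom$ are empty for all small $\delta$, so the corresponding averages vanish and the $\limsup$ is $0$; with $\me \geq 0$ this gives $\me(\dom\setminus\dnhd{\ferm}{\delta_0}) = 0$, i.e. $\me(\dnhd{\ferm}{\delta_0}\cap\dom) = \me(\dom) = 1$. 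Together with the standing hypothesis $\lem(\ferm\cap\dom) = 0$ this shows $\me \in \Dme{\ferm}$.

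Finally, for the limit identity, $\me \in \Dme{\ferm}$ gives $\cor{\me}\subset\ferm$ again by Proposition \ref{prop:cor_dme}, so $\sI{\ferm}{\fun}{\me}$ is well-defined and equals $\I{\dom}{\fun}{\me}$. Applying the hypothesis once to $\fun$ and once to $-\fun$ sandwiches $\I{\dom}{\fun}{\me}$ between $\liminf\limits_{\delta\downarrow 0}\mI{\dnhd{\ferm}{\delta}\cap\dom}{\fun}{\metoo}$ and $\limsup\limits_{\delta\downarrow 0}\mI{\dnhd{\ferm}{\delta}\cap\dom}{\fun}{\metoo}$; whenever the limit exists these agree and equality \eqref{eq:dme_limit} follows. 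I expect the only genuinely delicate point to be the emptiness argument in the concentration step — it rests on the monotonicity of the sublevel neighbourhoods $\dnhd{\ferm}{\delta}$ in $\delta$ — while the rest is just bookkeeping with indicator and constant test functions and the reuse of $\me$ as its own reference measure.
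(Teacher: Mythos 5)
Your proof is correct and follows essentially the same route as the paper: the forward direction via the choice $\metoo := \me$, the converse by testing the hypothesis (and its consequence for $-\fun$, which yields the matching $\liminf$ bound) against indicator and constant functions together with the monotonicity of $\delta \mapsto \dnhd{\ferm}{\delta}$, and the limit identity by the same $\pm\fun$ sandwich. The paper merely packages the concentration step differently, sandwiching $\me(\dnhd{\ferm}{\delta}\cap\dom)$ directly between averages that are identically $1$ for small $\delta$, rather than showing the complement is null; the content is identical.
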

\begin{proof}
	Let $\me \in \bawl{\dom}$.

	Assume there exists $\metoo \in \bawl{\dom}$ with $\metoo \geq 0$ satisfying
	\begin{equation*}
		\metoo(\dnhd{\ferm}{\delta}\cap \dom) > 0 \text{ for all } \delta > 0
	\end{equation*}
	such that for $\fun \in \libld{\dom}$
	\begin{equation*}
		\I{\dom}{\fun}{\me} \leq \limsup \limits_{\delta \downarrow 0}
		\mI{\dnhd{\ferm}{\delta}\cap\dom}{\fun}{\metoo}	\,.
	\end{equation*}
	Note that since
	\begin{equation*}
		\I{\dom}{-\fun}{\me} \leq \limsup \limits_{\delta \downarrow 0}
		\mI{\dnhd{\ferm}{\delta}\cap \dom}{-\fun}{\metoo}	
	\end{equation*} for $\fun \in \libld{\dom}$,
	\begin{equation*}
		\liminf \limits_{\delta \downarrow 0}
		\mI{\dnhd{\ferm}{\delta}\cap \dom}{\fun}{\metoo} \leq
		\I{\dom}{\fun}{\me} \,.
	\end{equation*}
	Then for $\delta > 0$
	\begin{equation*}
		\me(\dom) = \me(\dnhd{\ferm}{\delta}\cap \dom) = \lim \limits_{\delta \downarrow 0}
		\frac{\metoo(\dnhd{\ferm}{\delta}\cap
		\dom)}{\metoo(\dnhd{\ferm}{\delta}\cap \dom)} =1 \,.
	\end{equation*}
	Furthermore, for every $\bals \in \bor{\dom}$
	\begin{equation*}
		\me(\bals) \geq \liminf \limits_{\delta \downarrow 0}
		\frac{\metoo(\bals
		\cap\dnhd{\ferm}{\delta})}{\metoo(\dnhd{\ferm}{\delta}\cap \dom)} \geq 0
		\,.
	\end{equation*}
	Thus, $\me$ is a \tdme{} for $\ferm$. Equation \eqref{eq:dme_limit}
	follows with Proposition \ref{prop:cor_dme} and the previous estimates.
	
	Now assume $\me$ to be a \tdme{} for $\ferm$. Set $\metoo = \me$. Note
	that $\metoo(\dnhd{\ferm}{\delta}\cap \dom) >0$ for every $\delta >0$.
	Then for all $\fun \in \libld{\dom}$
	\begin{equation*}
		\I{\dom}{\fun}{\me} = \lim \limits_{\delta \downarrow 0}
		\I{\dnhd{\ferm}{\delta}\cap \dom}{\fun}{\me} \leq \limsup \limits_{\delta
		\downarrow 0} \mI{\dnhd{\ferm}{\delta}\cap \dom}{\fun}{\metoo} \,.
	\end{equation*}
\end{proof}
Now, existence is proved. It turns out that every \tme{} $\metoo\in
\bawl{\dom}$, which does not vanish near $\ferm$, induces a \tdme{}.
\begin{proposition}\label{prop:ex_densme}
	Let $\dom \in\bor{\Rn}$ and $\ferm\subset \cl{\dom}$ be closed with $\lem(\ferm \cap
	\dom) = 0$. Furthermore, let $\metoo \in \bawl{\dom}$ with $\metoo \geq
	0$ be such that for all $\delta > 0$
	\begin{equation*}
		\metoo(\dnhd{\ferm}{\delta}\cap \dom) > 0 \,.
	\end{equation*}
	Then there exists a \tdme{} $\me \in \bawl{\dom}$ such that for
	every $\fun \in \libld{\dom}$
	\begin{equation*}
		\liminf \limits_{\delta \downarrow 0}
		\mI{\dnhd{\ferm}{\delta}\cap \dom}{\fun}{\metoo} \leq \sI{\ferm}{\fun}{\me} \leq \limsup \limits_{\delta
		\downarrow 0} \mI{\dnhd{\ferm}{\delta}\cap \dom}{\fun
		}{\metoo} \,.
	\end{equation*}
\end{proposition}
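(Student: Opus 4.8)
The plan is to read the desired two-sided bound as a \textbf{Banach limit}/Hahn--Banach construction and then to convert the resulting functional into a measure via the representation of $\dual{\liss}$. For $\delta > 0$ write
\begin{equation*}
	T_\delta(\fun) := \mI{\dnhd{\ferm}{\delta}\cap \dom}{\fun}{\metoo} \,,
\end{equation*}
the average of $\fun$ against $\metoo$ over $\dnhd{\ferm}{\delta}\cap\dom$. This is well defined for every $\fun \in \libld{\dom}$, since $\metoo(\dnhd{\ferm}{\delta}\cap\dom)>0$ by hypothesis and $\metoo \in \bawl{\dom}$ does not charge $\lem$-\tnulset{}s, so any bounded representative is $\metoo$-\tIfun{} and the value does not depend on the representative. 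Each $T_\delta$ is linear, positive, satisfies $T_\delta(1)=1$, and obeys $|T_\delta(\fun)| \leq \normi{\fun}$ because it is an average against the positive measure $\metoo$.

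Next I would set $p(\fun) := \limsup_{\delta\downarrow 0} T_\delta(\fun)$ and verify that $p$ is a finite sublinear functional on $\libld{\dom}$: positive homogeneity follows from $\limsup(\lambda a_\delta)=\lambda\limsup a_\delta$ for $\lambda \geq 0$, subadditivity from the linearity of $T_\delta$ together with $\limsup(a_\delta+b_\delta)\leq \limsup a_\delta + \limsup b_\delta$, and $-\normi{\fun}\leq p(\fun)\leq \normi{\fun}$ is immediate. The Hahn--Banach theorem then yields a linear $\Lambda$ on $\libld{\dom}$ with $\Lambda \leq p$. Applying this to $\fun$ and to $-\fun$ and using $p(-\fun)=-\liminf_{\delta\downarrow 0}T_\delta(\fun)$ gives
\begin{equation*}
	\liminf_{\delta\downarrow 0} T_\delta(\fun) \leq \Lambda(\fun) \leq \limsup_{\delta\downarrow 0} T_\delta(\fun)
\end{equation*}
for every $\fun$. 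Hence $|\Lambda(\fun)|\leq \normi{\fun}$, so $\Lambda \in \dual{\liss}$ with $\sme:=\lem$ and $\sal:=\bor{\dom}$ (so that $\liss$ is $\libld{\dom}$); moreover $\Lambda$ is positive, since $\fun \geq 0$ forces $T_\delta(\fun)\geq 0$ and thus $\Lambda(\fun)\geq \liminf_{\delta\downarrow 0} T_\delta(\fun)\geq 0$, and $\Lambda(1)=1$ because $T_\delta(1)\equiv 1$.

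Now I would invoke Proposition~\ref{prop:dual_liss_pre} (with $\sme=\lem$) to obtain a unique $\me \in \bawl{\dom}$ representing $\Lambda$, i.e. $\Lambda(\fun)=\I{\dom}{\fun}{\me}$; positivity of $\Lambda$ gives $\me \geq 0$ (test on indicators) and $\Lambda(1)=1$ gives $\me(\dom)=1$. The key remaining step is to show $\me$ is a \tdme{} for $\ferm$, that is $\me(\dnhd{\ferm}{\delta_0}\cap\dom)=1$ for every $\delta_0>0$. For this I would evaluate $\Lambda$ on the indicator $\chi_{\dnhd{\ferm}{\delta_0}\cap\dom}$: whenever $\delta<\delta_0$ one has $\dnhd{\ferm}{\delta}\subset \dnhd{\ferm}{\delta_0}$, so this indicator is identically $1$ on $\dnhd{\ferm}{\delta}\cap\dom$ and hence $T_\delta\big(\chi_{\dnhd{\ferm}{\delta_0}\cap\dom}\big)=1$; therefore the liminf and limsup both equal $1$ and $\me(\dnhd{\ferm}{\delta_0}\cap\dom)=\Lambda\big(\chi_{\dnhd{\ferm}{\delta_0}\cap\dom}\big)=1$.

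Finally, since $\me$ is a \tdme{} for $\ferm$, Proposition~\ref{prop:cor_dme} gives $\cor{\me}\subset \ferm$, so $\me$ lives on $\dnhd{\ferm}{\delta}\cap\dom$ and by the definition of the restricted integral $\sI{\ferm}{\fun}{\me}=\I{\dom}{\fun}{\me}=\Lambda(\fun)$; the asserted inequality is then exactly the sandwich bound on $\Lambda$. The conceptual heart is the Hahn--Banach step, but the main obstacle I anticipate is the bookkeeping of well-definedness on $\libld{\dom}$: one must confirm that each $T_\delta$, the functional $\Lambda$, and the representing $\me$ all respect $\lem$-a.e. equality, which is precisely what the standing assumptions $\metoo,\me \in \bawl{\dom}$ guarantee.
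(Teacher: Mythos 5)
Your proposal is correct and follows essentially the same route as the paper: define the averaged functionals $T_\delta$, take $p=\limsup_{\delta\downarrow 0}T_\delta$ as the dominating sublinear functional, extend by Hahn--Banach, represent the resulting functional as a measure via Proposition~\ref{prop:dual_liss_pre}, and verify the density-measure properties by testing on indicators. The only cosmetic difference is that the paper extends from the subspace where the limit exists rather than from the trivial subspace, which changes nothing for the stated inequality.
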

\begin{remark}
	In particular, if $\lem(\dnhd{\ferm}{\delta} \cap \dom) >0$  for every
	$\delta >0$, then $\Dme{\ferm}\neq \emptyset$. In order to see this,
	note that $\metoo = \reme{\lem}{\dom}$ satisfies the assumptions of the
	preceding proposition.
	\noindent
	Furthermore, every \tdme{} arises in
	this way (cf. Proposition \ref{prop:char_dme}).
\end{remark}
\begin{proof}
	Let $\metoo \in \bawl{\dom}$ be such that for every $\delta > 0$
	\begin{equation*}
		\metoo(\dnhd{\ferm}{\delta} \cap \dom) > 0 \,.
	\end{equation*}
	Then
	\begin{equation*}
		p : \libld{\dom} \to \R : \fun \mapsto \limsup
		\limits_{\delta \downarrow 0 }\mI{\dnhd{\ferm}{\delta} \cap
		\dom}{\fun}{\metoo}
	\end{equation*}
	is a positively homogeneous, subadditive functional. Set $\BS :=
	\libld{\dom}$ and 
	\begin{equation*}
		\subspace{\BS} := \left\{\fun \in \BS \setpipe \lim \limits_{\delta
				\downarrow 0} \mI{\dnhd{\ferm}{\delta} \cap \dom}{\fun}{\metoo} \text{
		exists}\right\} \,.
	\end{equation*}
	Then $\subspace{\BS}$ is a linear subspace of $\BS$ and
	\begin{equation*}
		\de_0 : \subspace{\BS} \to \R : \fun \mapsto \lim
		\limits_{\delta \downarrow 0}\mI{\dnhd{\ferm}{\delta} \cap \dom}{\fun}{\metoo}
	\end{equation*}
	is a continuous linear functional which is bounded by $p$.
	The subadditive version of the Hahn-Banach theorem
	\cite[p. 62]{dunford_linear_1988}
	yields the existence of a linear extension $\de$ of $\de_0$ to all of $\BS$
	which is bounded by $p$. Note that for every $\fun \in
	\libld{\dom}$
	\begin{equation*}
		\df{\de}{\fun} \leq p(\fun) \leq
		\normi{\fun}
	\end{equation*}
	since $\metoo \wac \lem$. Hence, $\de$ is a continuous linear
	functional on $\libld{\dom}$. By Proposition \ref{prop:dual_liss_pre}, there exists $\me \in
	\bawl{\dom}$ such that for every $\fun \in \libld{\dom}$
	\begin{equation*}
		\df{\de}{\fun} = \I{\dom}{\fun}{\me} \,.
	\end{equation*}
	Note that for every $\fun \in \libld{\dom}$
	\begin{equation*}
		\I{\dom}{-\fun}{\me} \leq p(-f) = \limsup\limits_{\delta \downarrow 0}
		\mI{\dnhd{\ferm}{\delta} \cap \dom}{- \fun}{\metoo} 
	\end{equation*}
	which implies
	\begin{equation*}
		\liminf\limits_{\delta \downarrow 0}
		\mI{\dnhd{\ferm}{\delta} \cap \dom}{\fun}{\metoo} \leq \I{\dom}{\fun}{\me}
		\,.
	\end{equation*}
	Now it is easy to see that for every $\bals\in \bor{\dom}$
	\begin{equation*}
		0 \leq \liminf\limits_{\delta \downarrow 0}
		\mI{\dnhd{\ferm}{\delta} \cap \dom}{\ind{\bals}}{\metoo} \leq \me(\bals) \,.
	\end{equation*}
	Hence, $\me \geq 0$. 
	Furthermore,
	\begin{equation*}
		1 = \liminf\limits_{\delta \downarrow 0}
		\mI{\dnhd{\ferm}{\delta} \cap \dom}{\ind{\dom}}{\metoo} \leq
		\me(\dom) \leq \limsup\limits_{\delta
		\downarrow 0}
		\mI{\dnhd{\ferm}{\delta} \cap \dom}{\ind{\dom}}{\metoo} = 1 \,.
	\end{equation*}	
	Finally, let $\tilde{\delta} > 0$. Then 
	\begin{equation*}
		1 = \liminf\limits_{\delta \downarrow 0}
		\mI{\dnhd{\ferm}{\delta} \cap
			\dom}{\ind{\dnhd{\ferm}{\tilde{\delta}}
		\cap \dom}}{\metoo} \leq
		\me(\dnhd{\ferm}{\tilde{\delta}} \cap \dom) \leq \limsup\limits_{\delta
		\downarrow 0}
		\mI{\dnhd{\ferm}{\delta} \cap
			\dom}{\ind{\dnhd{\ferm}{\tilde{\delta}}
		\cap \dom}}{\metoo} = 1 \,.
	\end{equation*}
	Thus, $\me$ is a \tdme{} of $\ferm$.
\end{proof}
\begin{example}
	Let $\dom \subset \R^2$ be a cusped set as in Figure
	\ref{fig:cusp_dens} below and $\ferm = \{\eR\}$, where $\eR \in \R^2$
	is the point at the cusp. Then for every $\delta >0$ 
	\begin{equation*}
		\lem(\dnhd{\ferm}{\delta} \cap \dom) > 0 \,.
	\end{equation*}
	Hence there exists a \tdme{} $\me \in \Dme{\ferm}$ such that for every
	$\fun \in \libld{\dom}$
	\begin{equation*}
		\sI{\ferm}{\fun}{\me} = \lim \limits_{\delta\downarrow
		0}\mI{\dnhd{\ferm}{\delta}\cap \dom}{\fun}{\lem} \,,
	\end{equation*}
	if this limit exists. This example is in essence identical to Example
	\ref{ex:dzero}.
\end{example}
\begin{figure}[H]
	\centering
	\begin{tikzpicture}[scale=2.0]
		\draw (2,0) to[out=180,in=0] (0,1);
		\draw (0,1) to[out=180,in=90] (-1,0);
		\draw (-1,0) to[out=-90,in=180] (0,-1);
		\draw (0,-1) to[out=0,in=180] (2,0);
		\draw (0,0) node {$\dom$};
		\draw (1.95,0.05) to (2.05,-0.05);
		\draw (1.95,-0.05) to (2.05,0.05);
		\draw (2.2,0) node {$\ferm$};
		\draw[color=gray] (2,0) circle [radius = 1.5];
		\draw[color=gray] (0.5,1.0) node {$\dnhd{\ferm}{\frac{3}{2}}$};
		\draw[color=gray] (2,0) circle [radius = 1.0];
		\draw[color=gray] (1.0,0.7) node {$\dnhd{\ferm}{1}$};
		\draw[color=gray] (2,0) circle [radius = 0.4];
		\draw[color=gray] (1.5,0.4) node {$\dnhd{\ferm}{\frac{2}{5}}$};
	\end{tikzpicture}
	\caption{Existence of a \tdme{} at a cusp}\label{fig:cusp_dens}
\end{figure}
The integral with respect to a \tdme{} can be estimated by the essential
supremum and the essential infimum of the integrand near the \tcor{}.
\begin{proposition}\label{prop:cl_bds_dme}
	Let $\dom\in \bor{\Rn}$ and $\ferm\subset \cl{\dom} $ be closed with $\lem(\ferm
	\cap\dom) = 0$. Furthermore, let $\me
	\in \bawl{\dom}$ be a \tdme{} of $\ferm$. Then for every $\fun
	\in \libld{\dom}$
	\begin{equation*}
		\lim \limits_{\delta \downarrow 0}
		\essinf{\dnhd{\ferm}{\delta} \cap \dom}{\fun} \leq
		\sI{\ferm}{\fun}{\me} \leq \lim \limits_{\delta \downarrow
		0} \essup{\dnhd{\ferm}{\delta} \cap \dom}{\fun}
	\end{equation*}
\end{proposition}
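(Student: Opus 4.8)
The plan is to prove the upper estimate
\[
	\sI{\ferm}{\fun}{\me} \leq \lim \limits_{\delta \downarrow 0} \essup{\dnhd{\ferm}{\delta}\cap\dom}{\fun}
\]
and then to obtain the lower estimate by applying it to $-\fun$, using linearity of the integral together with the identity $\essup{\dnhd{\ferm}{\delta}\cap\dom}{-\fun} = -\essinf{\dnhd{\ferm}{\delta}\cap\dom}{\fun}$.

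First I would record two preliminary observations. Since $\me \in \Dme{\ferm}$, Proposition \ref{prop:cor_dme} gives $\cor{\me}\subset\ferm$, so that $\tova{\me}((\dnhd{\ferm}{\delta})^c) = 0$ for every $\delta > 0$; hence the value $\sI{\ferm}{\fun}{\me} = \I{\dnhd{\ferm}{\delta}\cap\dom}{\fun}{\me}$ is genuinely independent of the chosen $\delta$. Moreover, writing $M_\delta := \essup{\dnhd{\ferm}{\delta}\cap\dom}{\fun}$, the inclusion $\dnhd{\ferm}{\delta_1}\cap\dom \subset \dnhd{\ferm}{\delta_2}\cap\dom$ for $\delta_1 < \delta_2$ shows that $\delta \mapsto M_\delta$ is non-decreasing; as $\fun$ is essentially bounded, the limit $\lim_{\delta\downarrow 0} M_\delta = \inf_{\delta > 0} M_\delta$ exists and is finite.

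The heart of the argument is to convert the Lebesgue-essential bound into an estimate that the integration theory of the paper can see. Fix $\delta > 0$ and $\eps > 0$ and put
\[
	N := \{\eR \in \dnhd{\ferm}{\delta}\cap\dom \setpipe \fun(\eR) > M_\delta + \eps\} \,.
\]
By the definition of $M_\delta$ as an essential supremum with respect to $\lem$, the set $N$ is a Lebesgue null set. Enclosing $N$ in a Borel set of vanishing Lebesgue measure and using $\me \wac \lem$ together with Proposition \ref{thm:wac_totvar}, I would conclude $\oume{\tova{\me}}(N) = 0$. As $\eps > 0$ was arbitrary, this says precisely that $(\fun - M_\delta)^+$ is a null function, i.e. $\fun \leq M_\delta$ almost everywhere with respect to $\me$ on $\dnhd{\ferm}{\delta}\cap\dom$. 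Monotonicity of the integral and the normalisation $\me(\dnhd{\ferm}{\delta}\cap\dom) = \me(\dom) = 1$ built into the definition of a \tdme{} then give
\[
	\sI{\ferm}{\fun}{\me} = \I{\dnhd{\ferm}{\delta}\cap\dom}{\fun}{\me} \leq M_\delta \, \me(\dnhd{\ferm}{\delta}\cap\dom) = M_\delta \,.
\]
Passing to the limit $\delta \downarrow 0$ yields the upper estimate, and the symmetric argument applied to $-\fun$ completes the proof.

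The step I expect to be the main obstacle is the conversion just described. The notion of ``almost everywhere'' relevant here is the one governed by the outer measure $\oume{\tova{\me}}$, which, as the remark following the uniqueness of the limit in measure warns, behaves differently from its $\sigma$-additive counterpart: a null function need not vanish off a genuine null set. The delicate point is therefore to verify carefully that a Lebesgue null exceptional set is also $\oume{\tova{\me}}$-negligible, and this is exactly where the weak absolute continuity $\me \wac \lem$, rather than the stronger absolute continuity, must be invoked.
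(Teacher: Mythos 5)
Your proposal is correct and follows essentially the same route as the paper: reduce to the upper bound, use $\me \geq 0$, the normalisation $\me(\dnhd{\ferm}{\delta}\cap\dom)=1$ and monotonicity of the integral to get $\sI{\ferm}{\fun}{\me}\leq \essup{\dnhd{\ferm}{\delta}\cap\dom}{\fun}$, then pass to the limit using monotonicity and boundedness of the essential supremum in $\delta$. The only difference is that you spell out, via $\me \wac \lem$ and Proposition \ref{thm:wac_totvar}, why the Lebesgue-essential bound holds $\me$-almost everywhere — a step the paper's proof leaves implicit when it integrates the inequality $\fun \leq \essup{\dnhd{\ferm}{\delta}\cap\dom}{\fun}$.
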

\begin{proof}
	It suffices to prove the right-hand side of the inequality.

	Let $\fun \in \libld{\dom}$. Since $\me \geq 0$, for every $\delta
	> 0$
	\begin{equation*}
		\I{\dom}{\fun}{\me} = \I{\dnhd{\ferm}{\delta} \cap \dom}{\fun}{\me}
		\leq
		\I{\dnhd{\ferm}{\delta} \cap
		\dom}{\essup{\dnhd{\ferm}{\delta} \cap \dom}{\fun}}{\me}
		= 
		\essup{\dnhd{\ferm}{\delta} \cap \dom}{\fun} \,.
	\end{equation*}
	$\essup{\dnhd{\ferm}{\delta} \cap \dom}{\fun}$ is increasing in
	$\delta >0$ and bounded.
	Passing to the limit yields the statement.
\end{proof}
If $\Dme{\ferm} \neq \emptyset$ is ensured, then the
inequalities in the preceding proposition are sharp.
\begin{proposition}\label{prop:op_bds_dme}
	Let $\dom \in \bor{\Rn}$ and $\ferm \subset \cl{\dom}$ be non-empty,
	closed with $\lem(\ferm \cap \dom) = 0$ such that for every $\delta >0$
	\begin{equation*}
		\lem(\dnhd{\ferm}{\delta} \cap \dom) >0 \,.
	\end{equation*}
	Furthermore, let $\fun \in \libld{\dom}$. Then
	\begin{equation*}
		\sup \limits_{\me \in \Dme{\ferm}} \sI{\ferm}{\fun}{\me} =
		\lim \limits_{\delta \downarrow 0}
		\essup{\dnhd{\ferm}{\delta} \cap \dom}{\fun} 
	\end{equation*}
	and
	\begin{equation*}
		\inf \limits_{\me \in \Dme{\ferm}} \sI{\ferm}{\fun}{\me} =
		\lim \limits_{\delta \downarrow 0}
		\essinf{\dnhd{\ferm}{\delta} \cap \dom}{\fun} \,.
	\end{equation*}
\end{proposition}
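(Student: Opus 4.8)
The plan is to prove the supremum identity by two opposite inequalities, the infimum identity being symmetric. The bound
\[
\sup\limits_{\me\in\Dme{\ferm}} \sI{\ferm}{\fun}{\me} \leq \lim\limits_{\delta\downarrow 0} \essup{\dnhd{\ferm}{\delta}\cap\dom}{\fun}
\]
is handed to us directly by Proposition \ref{prop:cl_bds_dme}, which bounds every individual $\sI{\ferm}{\fun}{\me}$ by the right-hand side; the limit exists because $\delta\mapsto\essup{\dnhd{\ferm}{\delta}\cap\dom}{\fun}$ is monotone and, as $\fun\in\libld{\dom}$, bounded. So the whole problem reduces to producing density measures whose integral against $\fun$ approaches this limit from below.

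Write $L := \lim\limits_{\delta\downarrow 0}\essup{\dnhd{\ferm}{\delta}\cap\dom}{\fun}$ and fix $c < L$. The key step is to unwind the definition of the essential supremum: since $\essup{\dnhd{\ferm}{\delta}\cap\dom}{\fun}\geq L > c$ for every $\delta>0$, the superlevel set
\[
A_c := \{\eR\in\dom\setpipe \fun(\eR) > c\}
\]
must have positive Lebesgue measure inside every neighbourhood of $\ferm$, that is $\lem(A_c\cap\dnhd{\ferm}{\delta}\cap\dom)>0$ for all $\delta>0$. I would then take as reference measure $\metoo := \reme{\lem}{A_c}$ (weighted by a fixed positive integrable density if $\lem(\dom)=\infty$, so that $\metoo$ stays bounded; since the weight is positive this alters neither the positivity near $\ferm$ nor the estimate below). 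By the previous observation $\metoo\in\bawl{\dom}$ is nonnegative with $\metoo(\dnhd{\ferm}{\delta}\cap\dom)>0$ for every $\delta>0$, so it meets the hypotheses of the existence Proposition \ref{prop:ex_densme}.

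Proposition \ref{prop:ex_densme} then furnishes a density measure $\me\in\Dme{\ferm}$ with
\[
\liminf\limits_{\delta\downarrow 0}\mI{\dnhd{\ferm}{\delta}\cap\dom}{\fun}{\metoo} \leq \sI{\ferm}{\fun}{\me}.
\]
Since $\metoo$ charges only $A_c$, on which $\fun>c$ pointwise, every averaged integral $\mI{\dnhd{\ferm}{\delta}\cap\dom}{\fun}{\metoo}$ is at least $c$, whence $\sI{\ferm}{\fun}{\me}\geq c$ and therefore $\sup\limits_{\me\in\Dme{\ferm}}\sI{\ferm}{\fun}{\me}\geq c$. Letting $c\uparrow L$ yields the reverse inequality, and the two bounds together give the supremum identity. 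The infimum identity follows verbatim after replacing $\fun$ by $-\fun$, equivalently by restricting $\lem$ to the sublevel set $\{\eR\in\dom\setpipe\fun(\eR)<c\}$ for $c > \lim\limits_{\delta\downarrow 0}\essinf{\dnhd{\ferm}{\delta}\cap\dom}{\fun}$.

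The main obstacle is precisely the positivity claim $\lem(A_c\cap\dnhd{\ferm}{\delta}\cap\dom)>0$ for all $\delta>0$: this is exactly the point where the essential supremum over the shrinking neighbourhoods of $\ferm$ has to be converted into a statement about the Lebesgue measure of a superlevel set, and it is what makes the tailored reference measure admissible in Proposition \ref{prop:ex_densme}. Once this is secured, the rest is a routine combination of the existence theorem with the pointwise bound $\fun>c$ on the support of $\metoo$, so the only real care needed is in tying the definition of $\essup$ to the positivity of $\metoo$ near $\ferm$ and in keeping $\metoo$ within $\bawl{\dom}$.
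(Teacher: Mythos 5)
Your proposal is correct and follows essentially the same route as the paper: the upper bound comes from Proposition \ref{prop:cl_bds_dme}, and the lower bound is obtained by restricting Lebesgue measure to a superlevel set of $\fun$ (your $A_c=\{\fun>c\}$ versus the paper's $M_\eps=\{\fun\geq L-\eps\}$, a cosmetic difference) and feeding that reference measure into the existence Proposition \ref{prop:ex_densme}. Your extra care about keeping the reference measure bounded when $\lem(\dom)=\infty$, and your explicit justification that the superlevel set has positive measure in every neighbourhood of $\ferm$, are welcome refinements of steps the paper leaves implicit, but they do not change the argument.
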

\begin{proof}
	Let $\fun \in \libld{\dom}$ and $\eps > 0$. Set 
	\begin{equation*}
		M_\eps := \{\eR \in\dom \setpipe \fun(\eR) \geq \lim
			\limits_{\delta \downarrow
		0}\essup{\dnhd{\ferm}{\delta} \cap \dom}{\fun} - \eps\}
	\end{equation*}
	and
	\begin{equation*}
		\metoo_\eps := \reme{\lem}{M_\eps} \,.
	\end{equation*}
	Then $\metoo_\eps \in \bawl{\dom}$ is positive and such that for
	every $\delta >0$
	\begin{equation*}
		\metoo_\eps(\dnhd{\ferm}{\delta} \cap \dom) > 0 \,.
	\end{equation*}
	Hence by Proposition \ref{prop:dual_liss_pre} , there exists a \tdme{}
	$\me_\eps \in \bawl{\dom}$ of $\dom$
	such that
	\begin{equation*}
		\I{\dom}{\fun}{\me_\eps} \geq \liminf \limits_{\delta
		\downarrow 0} \mI{\dom}{\fun}{\metoo_\eps} \geq \lim
		\limits_{\delta\downarrow 0}
		\essup{\dnhd{\ferm}{\delta} \cap \dom}{\fun} - \eps \,.
	\end{equation*}
	Hence
	\begin{equation*}
		\sup \limits_{\me \in \Dme{\ferm}} \I{\dom}{\fun}{\me}
		\geq \sup \limits_{\eps > 0} \I{\dom}{\fun}{\me_\eps} \geq
		\lim \limits_{\delta\downarrow 0}
		\essup{\dnhd{\ferm}{\delta} \cap \dom}{\fun} \,.
	\end{equation*}
	On the other hand, Proposition \ref{prop:cl_bds_dme} yields
	\begin{equation*}
		\sup \limits_{\me \in \Dme{\ferm}} \I{\dom}{\fun}{\me}
		\leq \lim \limits_{\delta\downarrow 0}
		\essup{\dnhd{\ferm}{\delta} \cap \dom}{\fun} \,.
	\end{equation*}
	The statement for $\essinf{}{}$ follows analogously.
\end{proof}
The set of all \tdme{}s is a weak* compact convex set, as the following
proposition shows.
\begin{proposition}
	Let $\dom\in \bor{\Rn}$,
	$\ferm\subset \cl{\dom}$ be non-empty, closed such that $\lem(\ferm \cap
	\dom) = 0$. Then
	$\Dme{\ferm}$ is a convex weak* compact subset of $\bawl{\dom}$ as the
	dual of $\libld{\dom}$.
\end{proposition}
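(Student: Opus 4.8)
The plan is to exploit the identification $\bawl{\dom} = \dual{\libld{\dom}}$ furnished by Proposition \ref{prop:dual_liss_pre} (with $\sme = \lem$ and $\sal = \bor{\dom}$), under which the weak* topology is exactly the coarsest topology making each evaluation functional $\me \mapsto \I{\dom}{\fun}{\me}$, $\fun \in \libld{\dom}$, continuous. Convexity is immediate and I would dispatch it first: for $\me_1,\me_2 \in \Dme{\ferm}$ and $t \in [0,1]$ one has $t\me_1 + (1-t)\me_2 \geq 0$, while for every $\delta > 0$ both $(t\me_1 + (1-t)\me_2)(\dom)$ and $(t\me_1 + (1-t)\me_2)(\dnhd{\ferm}{\delta}\cap\dom)$ equal $t + (1-t) = 1$, so the convex combination is again a \tdme{}.

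For compactness I would combine Banach--Alaoglu with a closedness argument. Every $\me \in \Dme{\ferm}$ is positive with $\me(\dom) = 1$, hence $\norm{}{\me} = \tova{\me}(\dom) = \me(\dom) = 1$; thus $\Dme{\ferm}$ sits inside the closed unit ball of $\dual{\libld{\dom}}$, which is weak* compact by the Banach--Alaoglu theorem \cite{dunford_linear_1988}. Since a weak*-closed subset of a weak*-compact set is weak*-compact, it then suffices to prove that $\Dme{\ferm}$ is weak*-closed.

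The key step is to realise $\Dme{\ferm}$ as an intersection of sets each cut out by a weak*-continuous functional. The point is that every function involved is a bounded Borel function and therefore lies in $\libld{\dom}$: the constant $\ind{\dom}$, the indicators $\ind{\dnhd{\ferm}{\delta}\cap\dom}$ for $\delta > 0$, and the indicators $\ind{\bals}$ for $\bals \in \bor{\dom}$. Consequently the maps $\me \mapsto \me(\dom)$, $\me \mapsto \me(\dnhd{\ferm}{\delta}\cap\dom)$ and $\me \mapsto \me(\bals)$ are weak* continuous, and — using that positivity of $\me$ is equivalent to $\me(\bals) \geq 0$ for all $\bals \in \bor{\dom}$ — the defining conditions read
\begin{equation*}
	\Dme{\ferm} = \{\me \setpipe \me(\dom) = 1\} \cap \bigcap_{\delta > 0}\{\me \setpipe \me(\dnhd{\ferm}{\delta}\cap\dom) = 1\} \cap \bigcap_{\bals \in \bor{\dom}}\{\me \setpipe \me(\bals) \geq 0\} \,.
\end{equation*}
Each set on the right is the preimage of a closed subset of $\R$ under a weak*-continuous map, hence weak*-closed, and an arbitrary intersection of closed sets is closed.

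The main point to watch, rather than a genuine obstacle, lies in this last paragraph: one must confirm that each set $\dnhd{\ferm}{\delta}\cap\dom$ really yields an element of $\libld{\dom}$, i.e. that its indicator is essentially bounded and $\lem$-measurable. This is clear since these are bounded Borel functions. Once that bookkeeping is secured, weak* closedness follows formally, and together with the boundedness from the second paragraph it delivers weak* compactness; convexity having already been checked, the proposition follows.
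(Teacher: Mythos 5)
Your proof is correct and takes essentially the same route as the paper: convexity is checked directly, boundedness gives containment in the weak* compact unit ball via Banach--Alaoglu, and weak* closedness is reduced to the weak* continuity of the evaluation maps $\me \mapsto \me(\bals)$ on indicators of $\dom$, of the sets $\dnhd{\ferm}{\delta}\cap\dom$, and of arbitrary Borel sets. The paper phrases the closedness step as showing the complement is open by a three-case construction of neighbourhoods, which is just the dual formulation of your intersection-of-closed-preimages argument.
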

\begin{proof}
	W.l.o.g. $\Dme{\ferm} \neq \emptyset$.

	Let $\me_1,\me_2\in \Dme{\ferm}$ and $\coef_1, \coef_2 \in [0,1]$ such
	that $\coef_1 + \coef_2 = 1$. Then for every $\delta > 0$
	\begin{equation*}
		\coef_1\me_1(\dnhd{\ferm}{\delta} \cap \dom) +
		\coef_2\me_2(\dnhd{\ferm}{\delta} \cap \dom) =
		\coef_1\me_1(\dom) +
		\coef_2\me_2(\dom) = \coef_1 + \coef_2 = 1 \,.
	\end{equation*}
	and 
	\begin{equation*}
		\coef_1\me_1 + \coef_2 \me_2 \geq \coef_1 \me_1 \geq 0 \,.
	\end{equation*}
	Hence, $\Dme{\ferm}$ is a convex set.

	For $\me \in \Dme{\ferm}$
	\begin{equation*}
		\norm{}{\me} = \tova{\me}(\dom) = \me(\dom) = 1 \,.
	\end{equation*}
	Hence, $\Dme{\ferm}$ is a bounded set.

	Now let $\metoo \in \bawl{\dom} \setminus \Dme{\ferm}$. Then
	either $\metoo(\dom) \neq 1$ or there is a $\delta > 0$ such
	that $\metoo(\dnhd{\ferm}{\delta} \cap \dom) \neq 1$ or there is $\bals \in
	\bor{\dom}$ such that $\metoo(\bals) < 0$.

	Consider the first case. Set $\eps := \frac{1}{2}|\metoo(\dom)
	-1|$. Then
	\begin{equation*}
		\nhd(\metoo) := \{\me \in \bawl{\dom} \setpipe 
		|\me(\dom) - \metoo(\dom)| < \eps\}
	\end{equation*}
	is a weak* open set such that
	\begin{equation*}
		\nhd(\metoo) \cap \Dme{\ferm} = \emptyset \,.
	\end{equation*}
	
	In the second case set $\eps :=\frac{1}{2} |\metoo(\dnhd{\ferm}{\delta}
	\cap \dom)-1|$ and
	\begin{equation*}
		\nhd(\metoo) := \{\me\in\bawl{\dom} \setpipe
			|\me(\dnhd{\ferm}{\delta} \cap \dom) -
			\metoo(\dnhd{\ferm}{\delta} \cap \dom)|
			< \eps\}
	\end{equation*}
	is a weak* open set and 
	\begin{equation*}
		\nhd(\metoo) \cap \Dme{\ferm} = \emptyset \,.
	\end{equation*}

	In the third and final case set $\eps :=\frac{1}{2} |\metoo(\bals)|$ and
	\begin{equation*}
		\nhd(\metoo) := \{\me \in\bawl{\dom} \setpipe |\me(\bals) -
		\metoo(\bals)| < \eps \} \,.
	\end{equation*}
	Also in this case
	\begin{equation*}
		\nhd(\metoo) \cap \Dme{\ferm} = \emptyset \,.
	\end{equation*}
	Since $\metoo$ was arbitrary, the complement of $\Dme{\ferm}$ is weak*
	open and thus, $\Dme{\ferm}$ is weak* closed. The statement of the
	proposition follows by the Banach-Alaoglu/Alaoglu-Bourbaki Theorem (cf.
	\cite[p. 777]{zeidler_nonlinear_1986}).
\end{proof}
Now, the action of $\Dme{\ferm}$ on a fixed essentially bounded function can be
characterised.
\begin{corollary}
	Let $\dom\in \bor{\Rn}$,
	$\ferm \subset \cl{\dom}$ be non-empty, closed such that $\lem(\ferm
	\cap\dom) = 0$ and for every $\delta >0$
	\begin{equation*}
		\lem(\dnhd{\ferm}{\delta} \cap \dom) > 0 \,.
	\end{equation*}
	Furthermore, let $\fun \in \libld{\dom}$. 
	
	Then 
	\begin{equation*}
		\df{\Dme{\ferm}}{\fun} = \left[\lim \limits_{\delta \downarrow 0}
			\essinf{\dnhd{\ferm}{\delta} \cap \dom}{\fun}, \lim
			\limits_{\delta \downarrow 0}
		\essup{\dnhd{\ferm}{\delta} \cap \dom}{\fun}\right] \,.
	\end{equation*}
\end{corollary}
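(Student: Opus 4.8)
The plan is to realise the set $\df{\Dme{\ferm}}{\fun}$ as the image of the convex, weak* compact set $\Dme{\ferm}$ under a single weak* continuous linear functional, deduce that this image is a closed bounded interval, and then read off its endpoints from Proposition \ref{prop:op_bds_dme}.

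First I would fix $\fun \in \libld{\dom}$ and make the identification $\sI{\ferm}{\fun}{\me} = \df{\me}{\fun}$ precise. By Proposition \ref{prop:cor_dme} every $\me \in \Dme{\ferm}$ satisfies $\cor{\me} \subset \ferm$, so $\tova{\me}((\dnhd{\ferm}{\delta})^c) = 0$ for every $\delta > 0$ and hence $\sI{\ferm}{\fun}{\me} = \I{\dnhd{\ferm}{\delta}\cap \dom}{\fun}{\me} = \I{\dom}{\fun}{\me}$. Under the identification of $\bawl{\dom}$ with $\dual{\libld{\dom}}$ from Proposition \ref{prop:dual_liss_pre}, the map
\begin{equation*}
	\Phi : \Dme{\ferm} \to \R, \quad \me \mapsto \sI{\ferm}{\fun}{\me} = \df{\me}{\fun}
\end{equation*}
is exactly the restriction to $\Dme{\ferm}$ of the evaluation functional at $\fun$. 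Since the weak* topology on $\bawl{\dom}$ is by definition the coarsest one making all such evaluations continuous, $\Phi$ is weak* continuous; it is moreover linear, hence affine on the convex set $\Dme{\ferm}$.

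Next I would invoke the preceding proposition, which gives that $\Dme{\ferm}$ is a convex weak* compact subset of $\bawl{\dom}$, and note that it is non-empty because $\lem(\dnhd{\ferm}{\delta}\cap \dom) > 0$ for all $\delta > 0$ (the remark following Proposition \ref{prop:ex_densme}). The continuous affine image of a non-empty convex compact set is a non-empty compact convex subset of $\R$, that is, a closed bounded interval $[a,b]$ on which both extrema are attained; explicitly $a = \inf_{\me \in \Dme{\ferm}} \sI{\ferm}{\fun}{\me}$ and $b = \sup_{\me \in \Dme{\ferm}} \sI{\ferm}{\fun}{\me}$. Hence $\df{\Dme{\ferm}}{\fun} = [a,b]$.

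Finally I would identify the endpoints with Proposition \ref{prop:op_bds_dme}, which states precisely that
\begin{equation*}
	b = \lim \limits_{\delta \downarrow 0} \essup{\dnhd{\ferm}{\delta} \cap \dom}{\fun}, \qquad a = \lim \limits_{\delta \downarrow 0} \essinf{\dnhd{\ferm}{\delta} \cap \dom}{\fun},
\end{equation*}
yielding the claimed equality. The main obstacle is the first step: one must argue carefully that $\sI{\ferm}{\fun}{\me}$ genuinely coincides with the weak* continuous pairing $\df{\me}{\fun}$, so that both the weak* continuity of $\Phi$ and the role of $\cor{\me} \subset \ferm$ are used. Everything afterwards is the standard fact that the continuous affine image of a compact convex set in $\R$ is a closed interval, together with a direct appeal to Proposition \ref{prop:op_bds_dme} for the endpoints.
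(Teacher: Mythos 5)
Your proposal is correct and follows essentially the same route as the paper: both realise $\df{\Dme{\ferm}}{\fun}$ as the continuous affine image of the non-empty convex weak* compact set $\Dme{\ferm}$ under evaluation at $\fun$, hence a compact interval, and then pin down the endpoints via Proposition \ref{prop:op_bds_dme}. The only (harmless) difference is that you identify the endpoints directly as the attained infimum and supremum, whereas the paper sandwiches the set between the open and closed intervals using Propositions \ref{prop:cl_bds_dme} and \ref{prop:op_bds_dme} together.
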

\begin{proof}
	Since $\Dme{\ferm}$ is a weak* compact convex subset of
	$\bawl{\dom}$
	\begin{equation*}
		\df{\Dme{\ferm}}{\fun}	
	\end{equation*}
	is a convex compact subset of $\R$. In order to see this, note that
	\begin{equation*}
		\fun \in \dual{\bawl{\dom}} \,.
	\end{equation*}
	Since continuous images of compact sets are again compact,
	\begin{equation*}
		\df{\Dme{\ferm}}{\fun}
	\end{equation*}
	is compact. The convexity follows from the convexity of $\Dme{\ferm}$.
	By Proposition \ref{prop:cl_bds_dme} and Proposition
	\ref{prop:op_bds_dme} 
	\begin{align*}
		\left( \lim
	\limits_{\delta \downarrow 0}
	\essinf{\dnhd{\ferm}{\delta} \cap \dom}{\fun},\lim
	\limits_{\delta \downarrow 0}
\essup{\dnhd{\ferm}{\delta} \cap \dom}{\fun}\right) & \subset
\df{\Dme{\ferm}}{\fun} \\
	&\subset \left [\lim
	\limits_{\delta \downarrow 0}
	\essinf{\dnhd{\ferm}{\delta} \cap \dom}{\fun},\lim
	\limits_{\delta \downarrow 0}
\essup{\dnhd{\ferm}{\delta} \cap \dom}{\fun} \right]
	\end{align*}
	This, together with the fact that $\df{\Dme{\ferm}}{\fun}$ is closed, 
	implies the statement.
\end{proof}
Recall that for a convex set $M$ in a locally convex topological vector space $m\in M$ is an
extremal point if for every $m_1,m_2 \in M$ with $m_1 \neq m_2$ and $\coef_1,\coef_2 \in [0,1]$
with $\coef_1 + \coef_2=1$
\begin{equation*}
	m = \coef_1 m_1 + \coef_2 m_2 \implies \coef_1 = 1- \coef_2 \in \{0,1\} \,.
\end{equation*}
The importance of extremal points follows from the theorem of Krein-Milman (cf.
\cite[p. 154]{eidelman_functional_2004}, \cite[p. 157]{zeidler_nonlinear_1986-1}). In particular, every compact convex set is the closure of the
convex hull of its extremal points. Note that the theorem also implies that the
set of extremal points is non-empty. Hence, the extremal points of $\Dme{\ferm}$
can be regarded as spanning $\Dme{\ferm}$. The following proposition gives a
sufficient and necessary condition for a \tdme{} to be an extremal point.
\begin{proposition}
	Let $\dom \in \bor{\Rn}$,
	$\ferm\subset\cl{\dom}$ be non-empty, closed such that $\lem(\ferm\cap \dom) =
	0$ and $\me \in \Dme{\ferm}$.

	Then $\me$ is an extremal point of $\Dme{\ferm}$ if and only if
	for every $\bals \in \bor{\dom}$ either $\me(\bals) = 0$
	or $\me(\bals^c)=0$.
\end{proposition}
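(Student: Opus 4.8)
The statement is a $0$--$1$ law: the condition says that $\me$ takes only the values $0$ and $1$ on $\bor{\dom}$, so the plan is to prove both implications by elementary manipulations with finite additivity, mirroring the classical fact that the extreme points of the finitely additive probability measures are exactly the two-valued ones. Throughout I would use that a positive finitely additive $\me$ is monotone and satisfies $\me(\bals) + \me(\bals^c) = \me(\dom) = 1$ for every $\bals \in \bor{\dom}$.

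For the direction that the $0$--$1$ condition implies extremality I would argue via the contrapositive built into the definition: suppose $\me = \coef_1\me_1 + \coef_2\me_2$ with $\me_1,\me_2 \in \Dme{\ferm}$, $\coef_1,\coef_2 \in (0,1)$ and $\coef_1+\coef_2 = 1$, and show $\me_1 = \me_2$. Fix $\bals \in \bor{\dom}$. If $\me(\bals) = 0$, then $\coef_1\me_1(\bals) + \coef_2\me_2(\bals) = 0$ with all summands nonnegative forces $\me_1(\bals) = \me_2(\bals) = 0$. If instead $\me(\bals^c) = 0$, the same argument applied to $\bals^c$ gives $\me_1(\bals^c) = \me_2(\bals^c) = 0$, whence $\me_1(\bals) = \me_2(\bals) = 1$. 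In either case $\me_1(\bals) = \me_2(\bals)$, and since $\bals$ was arbitrary $\me_1 = \me_2$; thus any decomposition with distinct endpoints forces $\coef_1 \in \{0,1\}$, i.e. $\me$ is extremal.

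For the converse I would again argue by contraposition. Assume there is $\bals \in \bor{\dom}$ with $\me(\bals) \neq 0$ and $\me(\bals^c) \neq 0$; since $\me \geq 0$ this means $\coef_1 := \me(\bals) \in (0,1)$ and $\coef_2 := \me(\bals^c) = 1 - \coef_1 \in (0,1)$. Define the normalised restrictions $\me_1 := \frac{1}{\coef_1}\reme{\me}{\bals}$ and $\me_2 := \frac{1}{\coef_2}\reme{\me}{\bals^c}$. Finite additivity of $\me$ on the partition $\{\bals \cap \als,\, \bals^c \cap \als\}$ of $\als$ gives $\coef_1\me_1 + \coef_2\me_2 = \me$, while $\me_1(\bals) = 1 \neq 0 = \me_2(\bals)$ shows $\me_1 \neq \me_2$, so this is a nontrivial convex decomposition contradicting extremality. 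To close the argument I must verify $\me_1,\me_2 \in \Dme{\ferm}$: positivity and weak absolute continuity with respect to $\lem$ pass to restrictions immediately, and $\me_i(\dom) = 1$ holds by the normalisation.

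The one step that needs care, and which I expect to be the main obstacle, is verifying $\me_i(\dnhd{\ferm}{\delta} \cap \dom) = 1$ for all $\delta > 0$. For this I would use that $\me \in \Dme{\ferm}$ gives $\me(\dom \setminus \dnhd{\ferm}{\delta}) = \me(\dom) - \me(\dnhd{\ferm}{\delta} \cap \dom) = 0$; monotonicity of the positive measure $\me$ then yields $\me(\bals \setminus \dnhd{\ferm}{\delta}) = 0$, so $\me(\bals \cap \dnhd{\ferm}{\delta}) = \me(\bals) = \coef_1$ and hence $\me_1(\dnhd{\ferm}{\delta} \cap \dom) = 1$, and likewise for $\me_2$. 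This confirms $\me_1,\me_2 \in \Dme{\ferm}$ and completes the contradiction.
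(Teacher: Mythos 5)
Your proof is correct and follows essentially the same route as the paper: the same two-valued argument for sufficiency and the same normalised restrictions $\frac{1}{\me(\bals)}\reme{\me}{\bals}$, $\frac{1}{\me(\bals^c)}\reme{\me}{\bals^c}$ for necessity. The only difference is that you explicitly verify that these restrictions belong to $\Dme{\ferm}$ (and that they are distinct), which the paper asserts without proof; your verification is a welcome addition.
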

\begin{proof}
	Let $\me \in \Dme{\ferm}$ be such that for every $\bals \in \bor{\dom}$
	either $\me(\bals) = 0$ or $\me(\bals^c) =0$. Assume $\me =
	\coef_1\me_1 + \coef_2\me_2$ for $\me_1,
	\me_2 \in \Dme{\ferm}$ and $\coef_1,\coef_2 \in (0,1)$ such that
	$\coef_1 + \coef_2 = 1$ and $\me_1, \me_2 \neq \me$.
	Then there is $\bals \in \bor{\dom}$ such that
	\begin{equation*}
		\me_1(\bals)\neq \me_2(\bals) \,.
	\end{equation*}
	Suppose $\me(\bals)=0$. Then $\me_1(\bals) = \me_2(\bals) = 0$, a
	contradiction to the assumption.

	Hence $\me(\bals) = 1$ and $\me(\bals^c) = 0$.

	This implies
	\begin{equation*}
		\me_1(\bals^c) = \me_2(\bals^c) = 0 
	\end{equation*}
	and thus
	\begin{equation*}
		\me_1(\bals) = 1 = \me_2(\bals)\,,
	\end{equation*}
	a contradiction to the assumption.
	
	Hence $\me_1 = \me_2 =\me$ and
	$\me$ is an extremal point of $\Dme{\ferm}$.

	Now, assume $\me$ to be an extremal point of $\Dme{\ferm}$ and 
	assume, there exists $\bals \in \bor{\dom}$ such that
	$\me(\bals),\me(\bals^c) > 0$. Set
	\begin{align*}
		\me_1 &:= \frac{1}{\me(\bals)}\reme{\me}{\bals} \\	
		\me_2 &:= \frac{1}{\me(\bals^c)}\reme{\me}{\bals^c} \,.
	\end{align*}
	Then $\me_1$ and $\me_2$ are \tdme{}s and 
	\begin{equation*}
		\me = \me(\bals) \me_1 + \me(\bals^c) \me_2 \,,
	\end{equation*}
	and $\me$ is not an extremal point of $\Dme{\ferm}$ in contradiction to
	the assumption.
\end{proof}
A simple consequence is that the \tcor{} of extremal points contains exactly
one point. This is the same in the case of \tRaM{}, where the Dirac-measures are the
extremal points of the unit ball (cf. \cite[p. 156]{eidelman_functional_2004}). 
\begin{corollary}\label{cor:ex_dme_singl}
	Let $\dom \in \bor{\Rn}$,
	$\ferm \subset \cl{\dom}$ be non-empty, closed, $\lem(\ferm\cap \dom) =
	0$ and $\me \in
	\bawl{\dom}$ be an extremal point of $\Dme{\ferm}$.

	Then $\cor{\me}$ is a singleton.
\end{corollary}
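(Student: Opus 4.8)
The plan is to combine the characterisation of extremal points from the preceding proposition with a disjoint-neighbourhood argument showing that two distinct core points cannot coexist. First I would record that, since $\me \in \Dme{\ferm}$, one has $\me \geq 0$, $\me(\dom) = 1$, and, by Proposition \ref{prop:cor_dme}, that $\cor{\me} \subset \ferm$ and $\me$ is \tpfa{}. The preceding proposition gives, for the extremal $\me$, that for every $\bals \in \bor{\dom}$ either $\me(\bals) = 0$ or $\me(\bals^c) = 0$; since $\me$ is finitely additive with $\me(\bals) + \me(\bals^c) = \me(\dom) = 1$, this says that $\me$ is two-valued, i.e. $\me(\bals) \in \{0,1\}$ for all $\bals \in \bor{\dom}$. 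I would also note that $\me \geq 0$ together with finite additivity makes $\me$ monotone.

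The heart of the argument is to show that $\cor{\me}$ contains at most one point. I would suppose, towards a contradiction, that $\eR_1, \eR_2 \in \cor{\me}$ with $\eR_1 \neq \eR_2$, and set $r := \frac{1}{2}|\eR_1 - \eR_2|$, so that $\ball{\eR_1}{r} \cap \ball{\eR_2}{r} = \emptyset$. By the definition of \tcor{}, both $\me(\ball{\eR_1}{r} \cap \dom) > 0$ and $\me(\ball{\eR_2}{r} \cap \dom) > 0$. Putting $\bals := \ball{\eR_1}{r} \cap \dom$, which lies in $\bor{\dom}$ as it is relatively open in $\dom$, I get $\me(\bals) > 0$, while disjointness gives $\ball{\eR_2}{r} \cap \dom \subset \bals^c$, so monotonicity forces $\me(\bals^c) \geq \me(\ball{\eR_2}{r} \cap \dom) > 0$. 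Then neither $\me(\bals)$ nor $\me(\bals^c)$ vanishes, contradicting the two-valued characterisation; hence $\cor{\me}$ has at most one element.

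It then remains to see that $\cor{\me}$ is non-empty, and this is the step I expect to require the most care. Since $\me \neq 0$ (indeed $\me(\dom) = 1$), I would obtain non-emptiness from Proposition \ref{prop:cor_nempty}, whose finite-subcover argument shows that an empty core would force $\me(\dom) = 0$. The subtlety is that Proposition \ref{prop:cor_nempty} is stated for bounded $\dom$, whereas here $\dom$ is only assumed Borel; because a \tdme{} is concentrated in every neighbourhood $\dnhd{\ferm}{\delta}$ of $\ferm$ and $\cor{\me} \subset \ferm$, the effective requirement is that $\ferm$ be bounded (hence compact), in which case the same compactness argument applied to $\ferm$ excludes an empty core. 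Combining the two parts, $\cor{\me}$ consists of exactly one point, i.e. it is a singleton.
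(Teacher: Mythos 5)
Your uniqueness argument is exactly the paper's proof: the paper likewise assumes two distinct points $\eR,\eRR \in \cor{\me}$, separates them by disjoint balls of radius less than $\frac{1}{2}|\eR-\eRR|$, and derives a contradiction from the dichotomy $\me(\bals)=0$ or $\me(\bals^c)=0$ supplied by the preceding proposition; your observation that this dichotomy together with $\me(\bals)+\me(\bals^c)=1$ makes $\me$ two-valued is a harmless elaboration. Where you genuinely differ is in addressing non-emptiness, and you are right to flag it: the paper's own proof is silent on this and in fact only establishes that $\cor{\me}$ contains \emph{at most} one point. However, your proposed fix does not close the gap under the stated hypotheses. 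You assert that the effective requirement is that $\ferm$ be bounded, hence compact, but boundedness of $\ferm$ (or of $\dom$) is nowhere assumed in the corollary --- $\dom$ is merely Borel and $\ferm$ a non-empty closed subset of $\cl{\dom}$. The finite-subcover argument of Proposition \ref{prop:cor_nempty} really does need a relatively compact set to cover, and the remark following that proposition warns that on unbounded domains measures may concentrate near infinity and have empty core; a density measure for an unbounded $\ferm$ (for instance a line in $\R^2$) can behave in exactly this way, so non-emptiness can actually fail. The honest conclusion is that either a boundedness hypothesis should be added, or the statement should be read as saying that $\cor{\me}$ contains at most one point. Your write-up is therefore more careful than the paper on this point, but the step you single out as delicate is not merely delicate --- it is unavailable as the corollary is stated.
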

\begin{proof}
	Assume there were $\eR, \eRR \in \cor{\me}$ such that $\eR\neq \eRR$.
	Let $\delta > 0$ be such that $\delta <\frac{1}{2}|\eR-\eRR|$. Then either
	\begin{equation*}
		\me(\ball{\eR}{\delta}) = 0 \text{ or }
		\me(\ball{\eRR}{\delta}^c) = 0 
	\end{equation*}
	in contradiction to $\eR,\eRR \in \cor{\me}$.
\end{proof}
Another obvious corollary gives the values of extremal points on sets $\bals$
whose boundary does not meet the \tcor{} of the extremal point.
\begin{corollary}
	Let $\dom \subset \Rn$,
	$\ferm \subset \cl{\dom}$ be non-empty, closed, $\lem(\ferm\cap \dom) =
	0$ and $\me \in
	\bawl{\dom}$ be an extremal point of $\Dme{\ferm}$ with
	$\cor{\me} = \{\eR\}$ for some $\eR \in \cl{\dom}$. 
	
	Then for every $\bals \in \bor{\dom}$
	\begin{equation*}
		\me(\bals) = \begin{cases}
			1 &\mbox{ if } \eR \in \Int{\bals}, \\
			0 &\mbox{ if } \eR \notin \cl{\bals}\,.
		\end{cases}
	\end{equation*}
\end{corollary}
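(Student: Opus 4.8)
The plan is to combine two facts that are already available: the characterisation of extremal points proved just above (namely that $\me \in \Dme{\ferm}$ is extremal if and only if $\me(\bals) = 0$ or $\me(\bals^c) = 0$ for every $\bals \in \bor{\dom}$) together with the definition of $\cor{\me}$. First I would sharpen the extremality characterisation into a zero-one law. Since $\me$ is a \tdme{}, $\me \geq 0$ and $\me(\dom) = 1$; applying finite additivity to the disjoint decomposition $\dom = \bals \cup \bals^c$ gives $\me(\bals) + \me(\bals^c) = 1$, so the dichotomy ``$\me(\bals) = 0$ or $\me(\bals^c) = 0$'' becomes $\me(\bals) \in \{0,1\}$ for every $\bals \in \bor{\dom}$. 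I would also record that $\me \geq 0$ is monotone, and that because $\eR \in \cor{\me}$ and $|\me| = \me$, every open neighbourhood $\nhd \subset \Rn$ of $\eR$ satisfies $\me(\nhd \cap \dom) > 0$.

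For the first case, suppose $\eR \in \Int{\bals}$. Then I would choose $\delta > 0$ with $\ball{\eR}{\delta} \subset \bals$, so monotonicity yields
\begin{equation*}
	\me(\bals) \geq \me(\ball{\eR}{\delta} \cap \dom) > 0 \,,
\end{equation*}
where positivity comes from $\eR \in \cor{\me}$. Since $\me(\bals) \in \{0,1\}$, this forces $\me(\bals) = 1$.

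For the second case, suppose $\eR \notin \cl{\bals}$. Then I would choose $\delta > 0$ with $\ball{\eR}{\delta} \cap \cl{\bals} = \emptyset$, whence $\ball{\eR}{\delta} \cap \dom \subset \bals^c$; monotonicity gives $\me(\bals^c) \geq \me(\ball{\eR}{\delta} \cap \dom) > 0$, so $\me(\bals^c) = 1$ and therefore $\me(\bals) = 0$. Both cases together yield the claim.

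I do not expect a serious obstacle in this corollary: it is essentially a direct reading of the zero-one law against the core condition. The only points demanding care are bookkeeping — that $\bals^c$ is the complement taken within $\dom$, so that in the second case $\ball{\eR}{\delta} \cap \dom$ genuinely sits inside $\bals^c$ — and the immediate passage from the relatively open set $\ball{\eR}{\delta} \cap \dom \in \bor{\dom}$ to the core inequality. Note in particular that no boundedness of $\dom$ is required, since I use only the hypothesis $\eR \in \cor{\me}$ directly and never invoke Proposition \ref{prop:cor_nempty}.
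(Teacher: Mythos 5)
Your argument is correct: the zero--one law $\me(\bals)\in\{0,1\}$ obtained by combining the extremal-point characterisation with finite additivity and $\me(\dom)=1$, read against the definition of $\cor{\me}=\{\eR\}$ via monotonicity of $\me\geq 0$, is exactly the reasoning the paper implicitly relies on — it states this corollary as ``obvious'' and provides no proof of its own. The two points you flag (complements taken within $\dom$, and that $\ball{\eR}{\delta}\cap\dom$ is a legitimate test set for the core condition) are indeed the only places requiring care, and you handle them correctly.
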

The question arises, what happens on sets whose boundary meets the \tcor{}.
The following proposition gives a partial answer to this. It states that
extremal points concentrate along one-dimensional directions.
\begin{proposition}
	Let $\dom \in \bor{\Rn}$, $\ferm \subset \cl{\dom}$ be closed with
	$\lem(\ferm \cap \dom) = 0$ and $\me \in \Dme{\ferm}$ be an extremal
	point. Then there exist unique $\eR \in \ferm$ and $\vvec \in \Rn$ with
	$\norm{}{\vvec}=1$ such that for every $\alpha \in \left(0,\frac{\pi}{2}\right)$
	\begin{equation*}
		\me(K(\eR,\vvec,\alpha)\cap \dom) = 1 \,,
	\end{equation*}
	where 
	\begin{equation*}
		K(\eR,\vvec,\alpha) := \{\eRR \in \Rn | \eRR \neq \eR, \sphericalangle
		(\eRR-\eR, \vvec) < \alpha \} \,.
	\end{equation*}
\end{proposition}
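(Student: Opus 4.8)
The plan is to reduce the statement to a concentration result for a two-valued finitely additive measure on the unit sphere $S^{n-1}$, obtained by pushing $\me$ forward under radial projection from the apex. First I would fix the apex: by Proposition \ref{prop:cor_dme} one has $\cor{\me} \subset \ferm$, and by Corollary \ref{cor:ex_dme_singl} the core is a single point, so I set $\{\eR\} := \cor{\me}$ with $\eR \in \ferm$; this already produces the (unique) candidate for $\eR$. Since $\me \in \Dme{\ferm} \subset \bawl{\dom}$ and $\lem(\{\eR\}\cap \dom) = 0$, weak absolute continuity gives $\me(\{\eR\}\cap \dom) = 0$, so $\me$ is carried by $\dom \setminus \{\eR\}$. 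Moreover, by the characterisation of extremal points proved just above, extremality of $\me$ means that $\me(\bals) \in \{0,1\}$ for every $\bals \in \bor{\dom}$.

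Next I would introduce the radial projection $\pi : \dom \setminus \{\eR\} \to S^{n-1}$, $\pi(\eRR) := (\eRR - \eR)/|\eRR - \eR|$, which is continuous, and push $\me$ forward by setting $\nu(C) := \me(\pi^{-1}(C) \cap \dom)$ for Borel $C \subset S^{n-1}$. Continuity of $\pi$ makes $\pi^{-1}(C)$ Borel, so $\nu$ is well defined; it is finitely additive, inherits the two-valued property from $\me$, and satisfies $\nu(S^{n-1}) = \me(\dom \setminus \{\eR\}) = 1$. The key bookkeeping is that for an open spherical cap $C_\alpha := \{w \in S^{n-1} : \sphericalangle(w,\vvec) < \alpha\}$ one has exactly $\pi^{-1}(C_\alpha) = K(\eR,\vvec,\alpha) \cap (\dom\setminus\{\eR\})$, so that $\nu(C_\alpha) = \me(K(\eR,\vvec,\alpha)\cap\dom)$; hence it suffices to produce a direction $\vvec$ all of whose spherical neighbourhoods have full $\nu$-measure.

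To find $\vvec$ I would use compactness of $S^{n-1}$. Consider the family $\mathcal{F}$ of closed sets $F \subset S^{n-1}$ with $\nu(F) = 1$. Because $\nu$ is two-valued and finitely additive, $\nu(F_i^c)=0$ for each member, so for any finite subfamily $\nu(\bigcup_i F_i^c) = 0$ and therefore $\nu(\bigcap_i F_i) = 1$, whence $\bigcap_i F_i \neq \emptyset$; thus $\mathcal{F}$ has the finite intersection property, and by compactness there is a point $\vvec \in \bigcap_{F \in \mathcal{F}} F$. If some open neighbourhood $U$ of $\vvec$ had $\nu(U) = 0$, then $U^c$ would be a closed set with $\nu(U^c) = 1$, hence $U^c \in \mathcal{F}$ and $\vvec \in U^c$, a contradiction; therefore every open cap around $\vvec$ has $\nu$-measure $1$. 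Combined with the cone/cap identity this gives $\me(K(\eR,\vvec,\alpha)\cap\dom) = 1$ for all $\alpha \in (0,\tfrac{\pi}{2})$, which is the asserted existence. Uniqueness of $\vvec$ then follows from the two-valued property: two distinct unit vectors admit disjoint caps $C_\alpha$ and $C'_\alpha$ for small $\alpha$, which cannot both carry full measure, while $\eR$ is unique as the sole point of $\cor{\me}$.

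The hard part will be the middle step, namely guaranteeing that the pushed-forward measure concentrates at a single direction rather than merely having nonempty "support". The finite-intersection/compactness argument is exactly what rules out the mass spreading over several directions, and getting the measurability of $\pi^{-1}$ together with the precise cone/cap correspondence right is the technical heart. Once a concentrating direction $\vvec$ has been isolated, translating back through $\pi$ and invoking two-valuedness for the uniqueness of $\vvec$ is routine; the only point to state carefully is that the apex is taken to be the core $\eR = \cor{\me}$, which is what makes the pair canonical.
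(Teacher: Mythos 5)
Your proof is correct, and it reaches the key existence step by a genuinely different route than the paper. The paper works directly on $\dom$: for a sequence $\alpha_k \downarrow 0$ it covers the sphere by caps of angular radius $\alpha_k$, extracts a finite subcover by compactness, and uses finite additivity plus the two-valuedness of the extremal point to conclude that for each $k$ some cone $K(\eR,\vvec_k,\alpha_k)$ carries full measure; it then passes to a convergent subsequence $\vvec_k \to \vvec$ and checks that $K(\eR,\vvec,\alpha) \supset K(\eR,\vvec_k,\alpha_k)$ for large $k$. You instead push $\me$ forward under the radial projection to a two-valued finitely additive measure $\nu$ on $S^{n-1}$ and locate the concentration direction as a point in the intersection of all closed sets of full $\nu$-measure, which is non-empty by the finite intersection property. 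The two arguments are morally the same compactness principle (a $\{0,1\}$-valued finitely additive measure on a compact space has a point every neighbourhood of which has full measure), but your formulation isolates that principle cleanly, avoids the sequential bookkeeping with nested cones, and sidesteps a small imprecision in the paper's covering step (the paper restricts to $\ball{\eR}{1}$, which is unnecessary since the cones are unbounded and already cover $\dom\setminus\{\eR\}$ — exactly the identity $\pi^{-1}(S^{n-1}) = \dom\setminus\{\eR\}$ you use). The shared ingredients are the reduction of the apex to the singleton core via Corollary \ref{cor:ex_dme_singl}, the two-valuedness from the extremal-point characterisation, $\me(\{\eR\}\cap\dom)=0$ from weak absolute continuity, and the disjoint-cones argument for uniqueness of $\vvec$; on the uniqueness of $\eR$ your treatment is exactly as implicit as the paper's, which likewise only records that the core point is unique.
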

\begin{proof}
	By Corollary \ref{cor:ex_dme_singl}, there is a unique $\eR \in \ferm$ such that
	\begin{equation*}
		\cor{\me} = \{\eR\} \,.
	\end{equation*}
	Let $\{\alpha_k\}_{k \in \N} \subset \left(0,\frac{\pi}{2}\right)$ be such that
	\begin{equation*}
		\lim \limits_{k \to \infty} \alpha_k = 0 \,.
	\end{equation*}
	Let $S^n := \bd{\ball{0}{1}}$ and for every $k \in \N$ and $\vvec \in
	S^n$
	\begin{equation*}
		\nhd^k_\vvec := \{\vvec' \in S^n | \sphericalangle(\vvec,\vvec')
		< \alpha_k\} \,.
	\end{equation*}
	Then for each $k \in \N$
	\begin{equation*}
		\left\{\nhd^k_\vvec\right\}_{\vvec \in S^n}
	\end{equation*}
	is an open covering of $S^n$.
	Assume that for every $\vvec \in S^n$
	\begin{equation*}
		\me(K(\eR,\vvec,\alpha_k)\cap \dom) = 0 \,.
	\end{equation*}
	Since $S^n$ is compact, there exists a finite set $M \subset S^n$ such
	that
	\begin{equation*}
		S^n \subset \bigcup \limits_{\vvec \in M} \nhd^k_\vvec \,.
	\end{equation*}
	But then
	\begin{equation*}
		\ball{\eR}{1} \cap \dom \subset \left(\{\eR\} \cup \bigcup\limits_{\vvec \in M}
		K(\eR,\vvec,\alpha_k)\right) \cap \dom \,.
	\end{equation*}
	Hence
	\begin{equation*}
		\me(\dom) = \me(\ball{\eR}{1} \cap \dom) \leq \me(\{\eR\} \cap \dom) + \sum
		\limits_{\vvec \in M} \me\left(K(\eR,\vvec,\alpha_k) \cap
		\dom\right) = 0 \,,
	\end{equation*}
	in contradiction to 
	\begin{equation*}
		\me(\dom) = 1 \,.
	\end{equation*}
	Hence, for every $k \in \N$, there exists $\vvec_k \in S^n$ such that
	\begin{equation*}
		\me(K(\eR,\vvec_k,\alpha_k)\cap \dom) = 1 \,.
	\end{equation*}
	Since $S^n$ is compact, up to a subsequence
	\begin{equation*}
		\vvec_k \xrightarrow{k \to \infty}: \vvec \in S^n \,.
	\end{equation*}
	Now let $\alpha > 0$ and $k_0 \in \N$ be such that for every $k \in
	\N$, $k\geq k_0$
	\begin{equation*}
		\sphericalangle(\vvec_k, \vvec) < \frac{\alpha}{2} \text{ and }
		\alpha_k < \frac{\alpha}{2} \,.
	\end{equation*}
	Then
	\begin{equation*}
		K(\eR,\vvec,\alpha) \supset K(\eR,\vvec_k,\alpha_k) 
	\end{equation*}
	for every $k \geq k_0$ and thus
	\begin{equation*}
		\me(K(\eR,\vvec,\alpha) \cap \dom) \geq
		\me(K(\eR,\vvec_k,\alpha_k) \cap \dom) = 1 \,.
	\end{equation*}
	In order to prove that $\vvec$ is unique, assume there exists $\vvec'
	\in \Rn$, $\vvec' \neq \vvec$ such that the statement of the
	proposition holds. Set 
	\begin{equation*}
		\alpha := \frac{1}{3}\sphericalangle (\vvec, \vvec')
	\end{equation*}
	and note that
	\begin{equation*}
		K(\eR,\vvec,\alpha) \cap K(\eR,\vvec',\alpha) = \emptyset \,.
	\end{equation*}
	But then 
	\begin{equation*}
		\me(\dom \cap (K(\eR,\vvec,\alpha) \cup K(\eR,\vvec',\alpha))) =
		\me(\dom \cap K(\eR,\vvec,\alpha)) + \me(\dom \cap K(\eR,\vvec',\alpha)) = 2
	\end{equation*}
	a contradiction to $\me(\dom) = 1$.
\end{proof}
\begin{remark}
	The proposition above shows that extremal points in $\Dme{\ferm}$
	concentrate around one dimensional directions. Figure \ref{fig:ex_dens_cone}
	illustrates this. Note that it is only necessary for an extremal point
	of $\Dme{\ferm}$ to concentrate in this way. A sufficient condition
	might be that it concentrates on a cusp but this is still an open
	problem.
\end{remark}
\begin{figure}[H]
	\centering
	\begin{tikzpicture}[scale=2.0]
		\draw (-0.5,-0.5) node {$\dom$};
		\draw (-1,-1) rectangle (2,1);
		\draw (0.95,-0.95) to (1.05,-1.05);
		\draw (0.95,-1.05) to (1.05,-0.95);
		\draw (1.2,-1) node {$\ferm$};
		\draw [thick,->] (1,-1) -- node[above] {$\vvec$} (0.2,0);
		\draw[gray] (1,-1) to (0.8,1.3);
		\draw[gray] (1,-1) to (-1.3,-0.7);
		\draw[gray] (0.6,1.2) node {$K_{\alpha_1}$};
		\draw[gray] (1,-1) to (0.3,1.3);
		\draw[gray] (1,-1) to (-1.3,0);
		\draw[gray] (0.1,1.2) node {$K_{\alpha_2}$};
		\draw[gray] (1,-1) to (-0.2,1.3);
		\draw[gray] (1,-1) to (-1.3,0.5);
		\draw[gray] (-0.3,0.4) node {$K_{\alpha_3}$};
	\end{tikzpicture}
	\caption{The cones on which an extremal point of $\Dme{\ferm}$
	is concentrated}\label{fig:ex_dens_cone}
\end{figure}
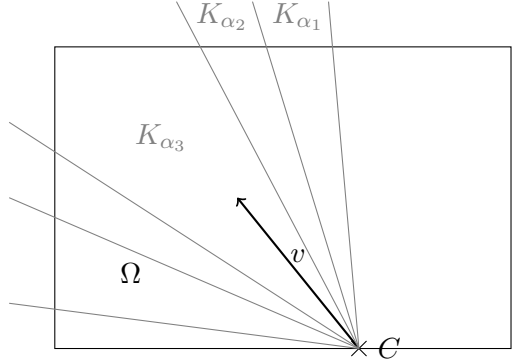

\begin{remark}
	The extremal points of $\Dme{\ferm}$ are called
	\textbf{directionally
	concentrated}\index{directionally concentrated@directionally concentrated \tdme{}} \tdme{}s. 
\end{remark}
Integration with respect to bounded \tdme{}s that was laid out is well-suited for essentially bounded functions $\fun
\in \libld{\dom}$ but in
general it is not suited for unbounded functions. The following example illustrates this.
\begin{example}
	Let $n=1$, $\dom = \ball{0}{1}\subset \R$ and $\ferm :=\{0\}$. Let 
	\begin{equation*}
		\fun(\eR) := \frac{1}{\sqrt{|\eR|}}\left(
		\ind{(-\infty,0)}(\eR) - \ind{[0,\infty)}(\eR) \right ) 
	\end{equation*}
	for $\eR \in \R$.
	Then 
	\begin{equation*}
		\lim \limits_{\delta \downarrow 0} \mI{\ball{0}{\delta}}{\fun}{\lem} = 0 \,.
	\end{equation*}
	Let $\me \in \Dme{\ferm}$ be a \tdme{} of $\ferm$. 
	Then for every $\eps > 0$ and every \tsimf{} $\simf \in \libl$
	\begin{equation}
		\tova{\me}\left(\left\{|\fun - \simf|> \eps\right\}\right) \geq \tova{\me}\left(\left\{|\fun| > \normi{\simf}+\eps\right\}\right) = 1 \,.
	\end{equation}
	Hence there is no sequence of simple function that converge \tconvim{} to $\fun$ and thus $\fun$ is not $\me$-\tIfun{}.
\end{example}
This chapter is closed with some suggestions of further uses for \tdme{}s.
For example, the trace of a function of bounded variation can be computed using \tdme{}s.
\begin{example}
	Let $\dom\subset \Rn$ be bounded with Lipschitz boundary. For
	$\eR \in \bd{\dom}$ let $\me_\eR \in \Dme{\{\eR\}}$ be such that
	\begin{equation*}
		\sI{\{\eR\}}{\fun}{\me_\eR} \leq \limsup \limits_{\delta \downarrow 0} \mI{\ball{\eR}{\delta} \cap
		\dom}{\fun}{\lem}
	\end{equation*}
	for every $\fun \in \libld{\dom}$. Then for every $\bvfun \in
	\BVd{\dom} \cap \libld{\dom}$ and $\ham^{n-1}$-a.e. $\eR \in \bd{\dom}$
	\begin{equation*}
		\bvtr{\dom}(\fun)(\eR) = \sI{\{\eR\}}{\fun}{\me_\eR} \,,
	\end{equation*}
	where $\bvtr{\dom}$ is the usual trace operator for functions of
	bounded variation (cf. \cite[p. 181]{Evans1992}). For fixed $\bvfun \in
	\BVd{\dom}$ it even holds true that for \tmae{} $\eR \in \bd{\dom}$
	\begin{equation*}
		\bvfun \in \lpbmd{1}{\me_\eR}{\dom} \,.
	\end{equation*}
	In order to see this, note that by Evans \cite[p. 181]{Evans1992}
	\begin{equation*}
		\lim \limits_{\delta \downarrow 0} \mI{\ball{\eR}{\delta} \cap
		\dom}{|\bvfun - \bvtr{\dom}(\bvfun)(\eR)|}{\lem} = 0 \,.
	\end{equation*}
	For every $k \in \N$ set
	\begin{equation*}
		\simf_k :=  \bvtr{\dom}(\bvfun)(\eR) \ind{\dom} \,.
	\end{equation*}
	For $\eps > 0$ set
	\begin{equation*}
		\bals_\eps := \{\eRR \in \dom \setpipe |\bvfun(\eRR)-
		\simf_k(\eRR) | \geq \eps \} \,.
	\end{equation*}
	Then
	\begin{align*}
		\me_\eR(\bals_\eps) & \leq \limsup \limits_{\delta\downarrow 0}
		\frac{\lem(\bals_\eps \cap \dom \cap
		\ball{\eR}{\delta})}{\lem(\ball{\eR}{\delta})} \\
		& \leq \limsup \limits_{\delta\downarrow 0} \frac{1}{\eps}
		\mI{\dom\cap
		\ball{\eR}{\delta}}{|\bvfun(\eRR)-\simf_k(\eRR)|}{\lem} \\
		& = \frac{1}{\eps} \lim \limits_{\delta\downarrow 0}
		\mI{\ball{\eR}{\delta}\cap \dom}{|\bvfun -
		\bvtr{\dom}(\bvfun)(\eR)}{\lem} = 0 \,.
	\end{align*}
	Hence, 
	\begin{equation*}
		\simf_k \convim{\me_\eR} \bvfun \,.
	\end{equation*}
	Furthermore, the sequence is constant and thus $\lp^1$-Cauchy. Thus
	$\bvfun \in \lpbmd{1}{\me_\eR}{\dom}$ and
	\begin{equation*}
		\sI{\{\eR\}}{\bvfun}{\me_\eR} = \bvtr{\dom}(\bvfun)(\eR) \,.
	\end{equation*}
	This shows, that even the trace of unbounded functions of bounded
	variation can be expressed this way.
\end{example}
\begin{remark}
	Slightly adapting the technique from the previous example, one can show
	that all unbounded functions are integrable with respect to density
	measures whose \tcor{} is one of the Lebesgue points of the function. This way, traces
	for Sobolev functions and functions of bounded variation can also be
	computed on the interior of the domain.

	For functions of bounded variation, this technique also works at jump
	points, i.e. points where the precise representative is the mean of the
	one-sided traces.
\end{remark}
It is also possible to use \tdme{}s to define a set-valued gradient for \tLcont{}
functions.
\begin{example}
	Let $\ferm = \{\eR\} \subset \Rn$ and $\fun: \Rn \to \R$ be \tLcont{}. Note that by Rademachers Theorem (cf.
	\cite[p. 81]{Evans1992}), $\Deriv{\fun}$ exists almost everywhere and
	is essentially bounded. Set
	\begin{equation*}
		\partial_d\fun(\eR) := \df{\Dme{\{\eR\}}}{\Deriv{\fun}} \,.
	\end{equation*}
	Then $\partial_d\fun(\eR)$ is a weak* compact, convex set which is
	contained in $\ball{0}{L}$, where $L$ is the Lipschitz constant of
	$\fun$.
	In plus, the linearity of the integral implies that
	for every $\fun_1, \fun_2\in \skpdr{1}{\infty}{\Rn}{\R}$
	\begin{equation*}
		\partial_d(\fun_1 + \fun_2)(\eR) \subset \partial_d\fun_1(\eR) +
		\partial_d\fun_2(\eR) \,.
	\end{equation*}
	and
	\begin{equation*}
		\partial_d(\fun_1\fun_2)(\eR) \subset \fun_1(\eR)
		\partial_d(\fun_2)(\eR) + \fun_2(\eR)
		\partial_d(\fun_1)(\eR)\,.
	\end{equation*}
	Note that the definition of $\partial_d$ hints at similarities to a characterisation of Clarkes Generalised Gradient in \cite[p. 63]{clarke_optimization_1984}.
\end{example}
The following proposition states that every \tpfa{} \tme{} induces a \tRaM{} on
its \tcor{}.
\begin{proposition}\label{prop:ram_on_core}
	Let $\dom \in \bor{\Rn}$ be bounded and $\me \in \bawl{\dom}$. 
	Then there exists a Radon measure $\sme$ supported on $\cor{\me}\subset \cl{\dom}$ such that for every $\sfun \in \Cefun{\dom}$
	\begin{equation*}
		\I{\dom}{\sfun}{\me} = \I{\cor{\me}}{\sfun}{\sme} \,.
	\end{equation*}
\end{proposition}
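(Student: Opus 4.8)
The plan is to realise the assignment $\sfun \mapsto \I{\dom}{\sfun}{\me}$ as a bounded linear functional on the space of continuous functions over the compact set $\cl{\dom}$, and then to invoke the classical Riesz Representation Theorem to produce $\sme$. First I would check that the map is well defined. Since $\dom$ is bounded, every $\sfun \in \Cefun{\dom}$ is bounded, hence essentially bounded; and because $\me \in \bawl{\dom}$ is bounded and weakly absolutely continuous with respect to $\lem$, such $\sfun$ lies in $\libld{\dom}$ and is $\me$-integrable, with $|\I{\dom}{\sfun}{\me}| \leq \normi{\sfun}\,\tova{\me}(\dom)$. Linearity of the integral in the integrand then shows that $L(\sfun) := \I{\dom}{\sfun}{\me}$ is linear and bounded by $\norm{}{\me}$ in the uniform norm. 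Identifying $\Cefun{\dom}$ with $C(\cl{\dom})$ and using that $\cl{\dom}$ is compact (as $\dom$ is bounded), the Riesz Representation Theorem yields a signed Radon measure $\sme$ on $\cl{\dom}$ with $L(\sfun) = \I{\cl{\dom}}{\sfun}{\sme}$ for all $\sfun \in \Cefun{\dom}$.

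Next I would show that the support of $\sme$ is contained in $\cor{\me}$. Let $\eR \in \cl{\dom}\setminus\cor{\me}$. By the definition of the \tcor{} there is an open neighbourhood $\nhd \subset \Rn$ of $\eR$ with $\tova{\me}(\nhd \cap \dom) = 0$; since $\nhd \cap \dom$ is open, hence lies in $\bor{\dom}$, this forces $\oume{\tova{\me}}(\nhd \cap \dom) = 0$. For any $\sfun \in \Cefun{\dom}$ whose support lies in $\nhd$, the set $\{|\sfun| > \eps\}$ is contained in $\nhd \cap \dom$ for every $\eps > 0$, so $\sfun$ is a \tnulfun{} and $\I{\dom}{\sfun}{\me} = 0$; consequently $\I{\cl{\dom}}{\sfun}{\sme} = 0$. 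Since this holds for all continuous functions supported in $\nhd$, the measure $\sme$ assigns no mass to $\nhd$, so $\eR$ lies outside the support of $\sme$. As $\eR \in \cl{\dom}\setminus\cor{\me}$ was arbitrary, the support of $\sme$ is contained in $\cor{\me}$, whence $\I{\cl{\dom}}{\sfun}{\sme} = \I{\cor{\me}}{\sfun}{\sme}$ and the asserted identity follows.

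The step I expect to be the main obstacle is the bookkeeping between the two ambient spaces: the \tcor{} $\cor{\me}$ is defined through the values of $\tova{\me}$ on traces $\nhd \cap \dom$ of open subsets of $\Rn$, whereas $\sme$ lives on the compact set $\cl{\dom}$, and $\cor{\me}$ may protrude into $\bd{\dom}$. Care is needed to ensure that continuous functions on $\cl{\dom}$ restrict to genuine, $\me$-integrable integrands on $\dom$, and that the passage from ``the integral vanishes for every function supported in $\nhd$'' to ``the support of $\sme$ misses $\nhd$'' is justified, which relies on the regularity of Radon measures together with the compactness supplied by $\cl{\dom}$ in the Riesz step. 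The integrability estimate and the null-function computation are otherwise routine given the integration theory and the definition of $\cor{\me}$ established earlier in the paper.
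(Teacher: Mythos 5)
Your proposal is correct and follows essentially the same route as the paper: both realise $\sfun \mapsto \I{\dom}{\sfun}{\me}$ as a bounded linear functional on $C(\cl{\dom})$ via the extension/restriction identification of $\Cefun{\dom}$, invoke the Riesz Representation Theorem there, and then localise to show that the support of $\sme$ misses every point of $\cl{\dom}\setminus\cor{\me}$. Your support step, which works directly from the defining neighbourhood $\nhd$ with $\tova{\me}(\nhd\cap\dom)=0$ and the null-function argument, is if anything slightly more self-contained than the paper's appeal to a ball disjoint from the closed set $\cor{\me}$.
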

\begin{proof}
	First, note that for every $\sfun \in \Cefun{\dom}$
	\begin{equation*}
		\left |\I{\dom}{\sfun}{\me}\right| \leq \norm{C}{\sfun} \cdot
		\tova{\me}(\dom)
	\end{equation*}
	Furthermore, note that every $\sfun\in \Cefun{\dom}$ can be extended to
	a function $\funex{\sfun} \in \Ccfun{\cl{\dom}}$ and every element of
	$\Ccfun{\cl{\dom}}$ can be restricted to $\dom$ to obtain an element of
	$\Cefun{\dom}$. Hence
	\begin{equation*}
		\de : \Ccfun{\cl{\dom}} \to \R : \sfun \mapsto
		\I{\dom}{\sfun}{\me}
	\end{equation*}
	is a continuous linear operator and by the Riesz Representation Theorem (cf. \cite[p. 106]{federer_geometric_1996}) there is a Radon measure $\sme$ on $\cl{\dom}$ such that for every
	$\sfun \in \Cefun{\dom}$
	\begin{equation*}
		\I{\dom}{\sfun}{\me} = \I{\cl{\dom}}{\sfun}{\sme} \,.
	\end{equation*}
	Now let $\eR \in \cl{\dom}\setminus \cor{\me}$. Then there exists a
	$\delta >0$ such that $\ball{\eR}{\delta} \cap \cor{\me} = \emptyset$.
	
	Then for every $\sfun \in \Ccfun{\ball{\eR}{\delta}\cap \cl{\dom}}$
	\begin{equation*}
		\I{\cl{\dom}}{\sfun}{\sme} = \I{\dom}{\sfun}{\me} = 0 \,.
	\end{equation*}
	Hence
	\begin{equation*}
		\tova{\sme}(\ball{\eR}{\delta}) = 0
	\end{equation*}
	and thus $\eR$ is not in the support of the \tsme{} $\sme$. Since
	$\eR\in \cl{\dom}\setminus\cor{\me}$ was arbitrary, it is proved that
	the support of $\sme$ is indeed a subset of $\cor{\me}$. This proves
	the statement of the proposition.
\end{proof}
\begin{remark}
	In the setting of the proposition above, $\sme$ is said to be a
	\textbf{representation of $\me$} on $\cor{\me}$.
\end{remark}
The next proposition gives a partial inverse to the statement of the
proposition above. In particular, any \tRaM{} can be extended to a \tme{} on
all of its domain.
\begin{proposition}\label{prop:smearing}
	Let $\dom \in \bor{\Rn}$ be bounded and $\ferm \subset \cl{\dom}$ be
	closed such
	that for every $\eR \in \ferm$ and every $\delta > 0$
	\begin{equation*}
		\lem(\ball{\eR}{\delta} \cap \dom) > 0 \,.
	\end{equation*}
	Furthermore, let $\sme$ be a Radon measure on $\ferm$. Then there
	exists $\me \in \bawl{\dom}$ such that for every $\sfun \in
	\Cefun{\dom}$
	\begin{equation*}
		\I{\dom}{\sfun}{\me} = \I{\ferm}{\sfun}{\sme} \,.
	\end{equation*}
	In particular, 
	\begin{equation*}
		\cor{\me} \subset \ferm 
	\end{equation*}
	and
	\begin{equation*}
		\tova{\me}(\dom) = \tova{\sme}(\ferm) \,.
	\end{equation*}
\end{proposition}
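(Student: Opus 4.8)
The plan is to construct $\me$ as a weak* limit of explicit, genuinely $\sigma$-additive ``fattenings'' of $\sme$, using throughout the identification $\dual{\libld{\dom}} \cong \bawl{\dom}$ from Proposition~\ref{prop:dual_liss_pre}. For each $\delta > 0$ I would smear the mass sitting at each $\eR \in \ferm$ over the ball $\ball{\eR}{\delta} \cap \dom$ by normalised Lebesgue measure, setting
\[
	\me_\delta(\bals) := \I{\ferm}{\mI{\ball{\eR}{\delta} \cap \dom}{\ind{\bals}}{\lem}}{\sme} \qquad (\bals \in \bor{\dom}),
\]
where the integrand is read as a function of $\eR \in \ferm$; the hypothesis $\lem(\ball{\eR}{\delta} \cap \dom) > 0$ makes the inner average well defined. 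I would first verify that $\eR \mapsto \lem(\bals \cap \ball{\eR}{\delta} \cap \dom)$ is continuous (the symmetric difference of two nearby balls has small Lebesgue measure), so the integrand is $\sme$-measurable and $\me_\delta$ makes sense. Since $\lem$ is $\sigma$-additive, each $\me_\delta$ is itself a $\sigma$-additive measure, absolutely continuous with respect to $\lem$, hence $\me_\delta \in \bawl{\dom}$. A partition argument, using that $\sum_k \mI{\ball{\eR}{\delta}\cap\dom}{\ind{\bals_k}}{\lem} = 1$ for any finite partition $\{\bals_k\}$ of $\dom$, yields the uniform bound $\tova{\me_\delta}(\dom) \leq \tova{\sme}(\ferm)$.

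Next I would pass to the limit. As $\{\me_\delta\}$ is norm-bounded in $\dual{\libld{\dom}}$, the Banach--Alaoglu theorem gives a subnet $(\me_{\delta_\alpha})$ with $\delta_\alpha \downarrow 0$ converging weak* to some $\me \in \bawl{\dom}$. For the representation I would establish, first for simple functions and then by uniform approximation for every $\sfun \in \Cefun{\dom}$, the identity $\I{\dom}{\sfun}{\me_\delta} = \I{\ferm}{\mI{\ball{\eR}{\delta}\cap\dom}{\sfun}{\lem}}{\sme}$. Because $\funex{\sfun}$ is continuous on $\cl{\dom}$, the inner average converges to $\funex{\sfun}(\eR)$ as $\delta \downarrow 0$, uniformly bounded by $\normi{\sfun}$, so dominated convergence over $(\ferm,\tova{\sme})$ gives $\I{\dom}{\sfun}{\me_\delta} \to \I{\ferm}{\sfun}{\sme}$. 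Since this ordinary limit over $\delta \downarrow 0$ exists, it coincides with the limit along the chosen subnet, and the weak* limit therefore satisfies $\I{\dom}{\sfun}{\me} = \I{\ferm}{\sfun}{\sme}$, which is the asserted representation.

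The remaining two claims I would read off the construction. For $\cor{\me} \subset \ferm$: if $\eR_0 \notin \ferm$, choose $\rho$ with $\cl{\ball{\eR_0}{\rho}} \cap \ferm = \emptyset$; for $\delta$ smaller than the (positive) distance between $\ferm$ and $\ball{\eR_0}{\rho}$, the inner average vanishes for every $\eR \in \ferm$ and every Borel $\bals \subset \ball{\eR_0}{\rho} \cap \dom$, so $\me_\delta(\bals) = 0$ and hence $\me(\bals) = 0$; thus $\tova{\me}(\ball{\eR_0}{\rho} \cap \dom) = 0$ and $\eR_0 \notin \cor{\me}$. For the total variation, the uniform bound together with weak* lower semicontinuity of the norm gives $\tova{\me}(\dom) = \norm{}{\me} \leq \tova{\sme}(\ferm)$; testing against functions in $\Cefun{\dom}$ of sup-norm at most one, obtained by Tietze extension from $C(\ferm)$, and combining with the Riesz characterisation of $\tova{\sme}(\ferm)$ as a supremum of such integrals, yields the reverse inequality, whence equality.

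The hard part will be the measurability and well-definedness of $\me_\delta$ together with the Fubini-type identity exchanging the $\sme$-integral over $\ferm$ with the $\me_\delta$-integral over $\dom$; once this bookkeeping is secured, the convergence of local averages to $\funex{\sfun}(\eR)$ and the two variation inequalities are routine. An alternative closer in spirit to Proposition~\ref{prop:ex_densme} would bypass the explicit fattenings: for $\sme \geq 0$ one defines the sublinear functional $p(\fun) := \I{\ferm}{\lim_{\delta \downarrow 0}\essup{\ball{\eR}{\delta}\cap\dom}{\fun}}{\sme}$, checks that $\eR \mapsto \essup{\ball{\eR}{\delta}\cap\dom}{\fun}$ is lower semicontinuous hence measurable, extends the functional $\sfun \mapsto \I{\ferm}{\sfun}{\sme}$ from $\Cefun{\dom}$ to all of $\libld{\dom}$ by Hahn--Banach, and identifies the extension with $\me$ via Proposition~\ref{prop:dual_liss_pre}, reducing the signed case to its Jordan decomposition; this variant leans on the same analysis of local essential suprema as Propositions~\ref{prop:cl_bds_dme} and~\ref{prop:op_bds_dme}.
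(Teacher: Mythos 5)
Your argument is correct, but it takes a genuinely different route from the paper's. The paper works purely by duality: it first shows that the hypothesis $\lem(\ball{\eR}{\delta}\cap\dom)>0$ forces $\norm{C}{\refun{\sfun}{\ferm}}\leq\normi{\sfun}$ for $\sfun\in\Cefun{\dom}$, so that $\de_0:\sfun\mapsto\I{\ferm}{\sfun}{\sme}$ is bounded on $\Cefun{\dom}\subset\libld{\dom}$ with norm at most $\tova{\sme}(\ferm)$; a norm-preserving Hahn--Banach extension and the identification $\dual{\libld{\dom}}\cong\bawl{\dom}$ then produce $\me$ in one stroke, and the variation identity is proved by the same two-sided testing argument you give in your last paragraph. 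Your main construction instead realises $\me$ as a weak* cluster point of explicit $\sigma$-additive fattenings $\me_\delta$. This costs you the measurability and Fubini-type bookkeeping you flag (all of which goes through: the numerator $\eR\mapsto\lem(\bals\cap\ball{\eR}{\delta}\cap\dom)$ is continuous, the denominator is continuous and, on the compact set $\ferm$, bounded away from zero, the partition bound gives $\tova{\me_\delta}(\dom)\leq\tova{\sme}(\ferm)$ uniformly, and since the full net $\I{\dom}{\sfun}{\me_\delta}$ converges as $\delta\downarrow 0$ the subnet limit is unambiguous). In exchange it buys several things the paper's argument does not give for free: positivity of $\me$ when $\sme\geq 0$, a completely transparent proof of $\cor{\me}\subset\ferm$ (each $\me_\delta$ literally vanishes on sets at distance more than $\delta$ from $\ferm$, whereas the paper's one-line justification that ``changing $\sfun$ outside of $\ferm$ does not change the integral'' directly controls only $\de_0$ and really needs to be completed via the variation identity), and an explicit bridge to the density-measure construction of Proposition~\ref{prop:ex_densme}. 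Your closing alternative via a sublinear majorant is essentially the paper's route, except that the paper majorises simply by $\normi{\fun}\,\tova{\sme}(\ferm)$; your sharper functional $p$ would in addition force the extension to respect local essential suprema near each point of $\ferm$, which the paper's plain norm-preserving extension does not.
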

\begin{remark}
	The conditions of the statement are satisfied if, for example,  $\ferm \subset \mbd{\dom} \cup \mint{\dom}$.	
\end{remark}
\begin{proof}
	Let $\sfun \in \Cefun{\dom}$. Then 
	\begin{equation*}
		\norm{C}{\refun{\sfun}{\ferm}} \leq \normi{\sfun} \,.
	\end{equation*}
	In order to see this, let $\eps >0$ and $\eR \in
	\ferm$ be such that
	\begin{equation*}
		\left | \sfun(\eR) - \norm{C}{\refun{\sfun}{\ferm}}\right | <
		\frac{\eps}{2} \,.
	\end{equation*}
	Let $\delta >0$ be such that for all $\eRR \in \ball{\eR}{\delta} \cap
	\dom$
	\begin{equation*}
		|\sfun(\eR) - \sfun(\eRR)| < \frac{\eps}{2} \,.
	\end{equation*}
	By assumption 
	\begin{equation*}
		\lem(\ball{\eR}{\delta} \cap \dom) > 0 
	\end{equation*}
	whence
	\begin{equation*}
		\normi{\sfun} \geq |\sfun(\eR)| - \frac{\eps}{2} \geq
		\norm{C}{\refun{\sfun}{\ferm}} - \eps \,.
	\end{equation*}
	Since $\eps > 0$ was arbitrary, the statement follows.

	Set
	\begin{equation*}
		\de_0 : \Cefun{\dom} \subset \libld{\dom} \to \R : \sfun \mapsto
		\I{\ferm}{\sfun}{\sme}
	\end{equation*}
	and note that for every $\sfun \in \Cefun{\dom}$
	\begin{equation*}
		\left |\df{\de_0}{\sfun} \right | \leq
		\norm{C}{\refun{\sfun}{\ferm}} \tova{\sme}(\ferm) \leq
		\normi{\sfun} \tova{\sme}(\ferm) \,.
	\end{equation*}
	By the Hahn-Banach theorem (cf. \cite[p. 63]{dunford_linear_1988})
	there exists a continuous extension $\de$ of $\de_0$ to all of
	$\libld{\dom}$ such that
	\begin{equation*}
		\norm{}{\de} =  \norm{}{\de_0}\,.
	\end{equation*}
	But $\dual{\libld{\dom}} = \bawl{\dom}$ by Proposition
	\ref{prop:dual_liss_pre}.
	Hence, there exists $\me \in \bawl{\dom}$ such that for every $\sfun
	\in \Cefun{\dom}$
	\begin{equation*}
		\I{\dom}{\sfun}{\me} = \I{\ferm}{\sfun}{\sme} \,.
	\end{equation*}
		Let $\sfun\in \Cefun{\dom}$ such that $\normi{\sfun} \leq 1$. Then
	\begin{equation*}
		\norm{C}{\refun{\sfun}{\ferm}} \leq \normi{\sfun} \leq 1 \,.
	\end{equation*}
	Hence,
	\begin{equation*}
		\tova{\me}(\dom) = \norm{}{\de_0} = \sup \limits_{\substack{\sfun\in
		\Cefun{\dom},\\\normi{\sfun}\leq 1}} \I{\dom}{\sfun}{\me} \leq
		\sup \limits_{\substack{\sfun\in
		\Cefun{\dom},\\\norm{C}{\refun{\sfun}{\ferm}} \leq 1}}
		\I{\ferm}{\sfun}{\sme} \leq \tova{\sme}(\ferm) \,.
	\end{equation*}
	Note that for every $\sfun \in \Cefun{\dom}$
	\begin{equation*}
		\max(\min(\sfun, 1),-1) \in \Cefun{\dom}
	\end{equation*}
	and that every $\sfun \in \Ccfun{\ferm}$ can be extended 
	to all of $\cl{\dom}$, preserving the norm (cf.
	\cite[p. 25]{pedersen_analysis_1989}).
	Hence, every $\sfun \in \Ccfun{\ferm}$ can be extended to
	$\funex{\sfun} \in \Ccfun{\cl{\dom}}$ such that
	\begin{equation*}
		\norm{C}{\sfun} = \norm{C}{\funex{\sfun}} \,.
	\end{equation*}
	Thus
	\begin{equation*}
		\tova{\sme}(\ferm) = \sup \limits_{\substack{\sfun \in 
		\Ccfun{\ferm},\\\norm{C}{\sfun} \leq 1}}
		\I{\ferm}{\sfun}{\sme} = \sup
		\limits_{\substack{\funex{\sfun}\in \Ccfun{\cl{\dom}},\\
		\norm{C}{\funex{\sfun}} \leq 1}} \I{\dom}{\funex{\sfun}}{\me} \leq \sup
		\limits_{\substack{\funex{\sfun}\in
		\Ccfun{\cl{\dom}},\\\normi{\funex{\sfun}}\leq 1}}
		\I{\dom}{\funex{\sfun}}{\me} \leq \tova{\me}(\dom) \,.
	\end{equation*}
	Since changing $\sfun$ outside of $\ferm$ does not change the integral,
	$\cor{\me} \subset \ferm$.
	This finishes the proof.
\end{proof}
The \tme{} from the preceding proposition is \tpfa{} if the \tRaM{} is singular
with respect to Lebesgue measure.
\begin{corollary}
	Let $\dom \in \bor{\Rn}$ be bounded and $\ferm \subset \cl{\dom}$ be
	closed such
	that for every $\eR \in \ferm$ and $\delta >0$
	\begin{equation*}
		\lem(\ball{\eR}{\delta} \cap \dom) >0
	\end{equation*}
	and 
	\begin{equation*}
		\lem(\ferm \cap \dom) = 0\,.
	\end{equation*}
	Furthermore, let $\sme$ be a \tRaM{} on $\ferm$.

	Then there exists $\me \in
	\bawl{\dom}$ such that for all $\sfun \in \Ccfun{\dom}$ 
	\begin{equation*}
		\I{\dom}{\sfun}{\me} = \I{\ferm}{\sfun}{\sme}\,.
	\end{equation*}
	Furthermore, 
	\begin{equation*}
		\tova{\me}(\dom) = \tova{\sme}(\ferm)
	\end{equation*}
	and $\me$ is \tpfa{}.
\end{corollary}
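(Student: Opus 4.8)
The plan is to obtain the corollary essentially verbatim from the smearing Proposition \ref{prop:smearing}, adding only the purity criterion of Proposition \ref{prop:corenul_pfa} to exploit the extra singularity hypothesis.

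First I would verify that the hypotheses of Proposition \ref{prop:smearing} are exactly those available here: $\dom \in \bor{\Rn}$ is bounded, $\ferm \subset \cl{\dom}$ is closed, $\lem(\ball{\eR}{\delta} \cap \dom) > 0$ for every $\eR \in \ferm$ and every $\delta > 0$, and $\sme$ is a Radon measure on $\ferm$. Applying that proposition produces $\me \in \bawl{\dom}$ with
\begin{equation*}
	\I{\dom}{\sfun}{\me} = \I{\ferm}{\sfun}{\sme}
\end{equation*}
for every $\sfun \in \Cefun{\dom}$, together with $\cor{\me} \subset \ferm$ and $\tova{\me}(\dom) = \tova{\sme}(\ferm)$. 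Since every $\sfun \in \Ccfun{\dom}$ extends by zero to an element of $\Ccfun{\cl{\dom}}$ and hence restricts to an element of $\Cefun{\dom}$, the inclusion $\Ccfun{\dom} \subset \Cefun{\dom}$ holds, so the displayed identity in particular holds for all $\sfun \in \Ccfun{\dom}$. This already delivers the first assertion and the total-variation equality $\tova{\me}(\dom) = \tova{\sme}(\ferm)$.

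It remains to show that $\me$ is \tpfa{}, and this is where the new hypothesis $\lem(\ferm \cap \dom) = 0$ enters. From $\cor{\me} \subset \ferm$ and monotonicity of $\lem$ I obtain
\begin{equation*}
	\lem(\cor{\me} \cap \dom) \leq \lem(\ferm \cap \dom) = 0 \,,
\end{equation*}
so that $\cor{\me} \cap \dom$ is a $\lem$-\tnulset{}. Proposition \ref{prop:corenul_pfa}, applied with $\dom \in \bor{\Rn}$ and $\me \in \bawl{\dom}$, then yields directly that $\me$ is \tpfa{}, completing the argument.

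I do not expect a genuine obstacle here: all the analytic content is carried by the two cited propositions, and the corollary merely records that strengthening the geometric assumption on $\ferm$ to the Lebesgue-singularity $\lem(\ferm \cap \dom) = 0$ upgrades the conclusion $\cor{\me} \subset \ferm$ of Proposition \ref{prop:smearing} into purity via Proposition \ref{prop:corenul_pfa}. The only points that warrant a moment of care are the verification that the hypotheses of the smearing proposition are met and the elementary observation that $\cor{\me} \cap \dom$ inherits being a null set from $\ferm \cap \dom$.
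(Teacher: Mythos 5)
Your proposal is correct and follows exactly the route of the paper, whose entire proof reads ``The preceding proposition and Proposition \ref{prop:corenul_pfa} yield the statement.'' You have merely made explicit the two details the paper leaves implicit, namely that $\cor{\me}\cap\dom$ inherits being a $\lem$-\tnulset{} from $\ferm\cap\dom$, and that the integral identity restricts from $\Cefun{\dom}$ to $\Ccfun{\dom}$.
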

\begin{proof}
	The preceding proposition and Proposition \ref{prop:corenul_pfa} yield
	the statement.
\end{proof}
The following example presents another way to construct a density at zero.
\begin{example}
	Let $\dom\in \bor{\Rn}$ be bounded and $\eR \in \cl{\dom}$ such that for every
	$\delta >0$ 
	\begin{equation*}
		\lem(\ball{\eR}{\delta} \cap \dom) > 0 \,.
	\end{equation*}
	Then there exists a \tpfa{} $\me \in \bawl{\dom}$ such that for every
	$\sfun \in \Cefun{\dom}$
	\begin{equation*}
		\I{\dom}{\sfun}{\me} = \sfun(\eR) \,.
	\end{equation*}
\end{example}
The next example shows an extension for $\ham^{n-1}$.
\begin{example}
	Let $\dom \in \bor{\Rn}$ be open, bounded and have smooth boundary. Then $\lem(\bd{\dom}) =
	0$ and $\ferm= \bd{\dom}$ satisfies the assumptions of
	Proposition \ref{prop:smearing}.
	Hence, there exists $\me \in \bawl{\dom}$ such that for all $\sfun \in
	\Cefun{\dom}$
	\begin{equation*}
		\I{\bd{\dom}}{\sfun}{\ham^{n-1}} = \I{\dom}{\sfun}{\me} \,.
	\end{equation*}
\end{example}
The following example shows, that the surface part of a Gauß formula can be
expressed as an integral with respect to a \tpfa{} measure.
\begin{example}
	Let $\dom \in \bor{\Rn}$ be a bounded set with smooth boundary. Then $\ferm = \bd{\dom} \subset
	\cl{\dom}$ is a closed set and for every $k \in \N$ such that $1 \leq
	k \leq n$
	\begin{equation*}
		\bvnu{k} \cdot \reme{\ham^{n-1}}{\bd{\dom}}
	\end{equation*}
	is a \tRaM{} on $\ferm$. By Proposition \ref{prop:smearing} there
	exists $\me_k \in \bawl{\dom}$ such that for every $\sfun \in
	\Cefun{\dom}$
	\begin{equation*}
		\I{\bd{\dom}}{\sfun \cdot \bvnu{k}}{\ham^{n-1}} =
		\I{\dom}{\sfun}{\me_k} = \sI{\bd{\dom}}{\sfun}{\me_k} 
	\end{equation*}
	and 
	\begin{equation*}
		\cor{\me_k} \subset \bd{\dom} \,.
	\end{equation*}
	Hence, there exists $\me \in \bawln{\dom}$ such that for all $\sfun \in
	\ConeN{\cl{\dom}}$
	\begin{equation*}
		\sI{\bd{\dom}}{\sfun}{\me} = \I{\dom}{\sfun}{\me} = \I{\bd{\dom}}{\sfun
		\cdot \normal{}}{\ham^{n-1}} =
			\I{\dom}{\divv{\sfun}}{\lem} \,,
	\end{equation*}
	where the Gauß formula for sets with finite perimeter from Evans \cite[p. 209]{Evans1992} was used.
	Furthermore,
	\begin{equation*}
		\cor{\me} \subset \bd{\dom}
	\end{equation*}
	and $\me$ is \tpfa{} by
	Proposition \ref{prop:corenul_pfa}.
\end{example}
\dobib

\bibliographystyle{plain}
\bibliography{Diss.bib}

\end{document}